\newtheorem{theorem}{Theorem}[section]
\newtheorem{lemma}[theorem]{Lemma}
\newtheorem{cor}[theorem]{Corollary}
\newtheorem{conj}[theorem]{Conjecture}
\theoremstyle{definition}
\newtheorem{hypothesis}[theorem]{Hypothesis}
\newtheorem{notation}[theorem]{Notation}
\newtheorem{remark}[theorem]{Remark}
\newtheorem{defn}[theorem]{Definition}
\newtheorem{example}[theorem]{Example}
\numberwithin{equation}{theorem}
\newcommand{\AAA}{\mathbb{A}}
\newcommand{\CC}{\mathbb{C}}
\newcommand{\FF}{\mathbb{F}}
\newcommand{\PP}{\mathbb{P}}
\newcommand{\QQ}{\mathbb{Q}}
\newcommand{\RR}{\mathbb{R}}
\newcommand{\ZZ}{\mathbb{Z}}
\newcommand{\calE}{\mathcal{E}}
\newcommand{\calF}{\mathcal{F}}
\newcommand{\calG}{\mathcal{G}}
\newcommand{\calL}{\mathcal{L}}
\newcommand{\calO}{\mathcal{O}}
\newcommand{\calR}{\mathcal{R}}
\newcommand{\be}{\mathbf{e}}
\newcommand{\bv}{\mathbf{v}}
\newcommand{\dual}{\vee}
\DeclareMathOperator{\FIsoc}{\mathbf{F-Isoc}}
\DeclareMathOperator{\codim}{codim}
\DeclareMathOperator{\crys}{crys}
\DeclareMathOperator{\Ext}{Ext}
\DeclareMathOperator{\Frac}{Frac}
\DeclareMathOperator{\Frob}{Frob}
\DeclareMathOperator{\GL}{GL}
\DeclareMathOperator{\Hom}{Hom}
\DeclareMathOperator{\perf}{perf}
\DeclareMathOperator{\rank}{rank}
\DeclareMathOperator{\rig}{rig}
\DeclareMathOperator{\SL}{SL}
\DeclareMathOperator{\Spec}{Spec}
\DeclareMathOperator{\Swan}{Swan}
\DeclareMathOperator{\Sym}{Sym}
\DeclareMathOperator{\unr}{unr}
\begin{document}

\title{Notes on isocrystals}
\author{Kiran S. Kedlaya}
\date{November 20, 2021}
\thanks{These notes are based on lectures given in the geometric Langlands seminar at the University of Chicago during spring 2016. Thanks to Tomoyuki Abe, Marco D'Addezio, Valentina Di Proietto, Vladimir Drinfeld, H\'el\`ene Esnault, Ambrus P\'al, and Atsushi Shiho for additional feedback, and to D'Addezio for the reference \cite{yu}. The author was supported by NSF grants DMS-1501214, DMS-1802161, DMS-2053473, and the UC San Diego Warschawski Professorship.}

\begin{abstract}
For varieties over a perfect field of characteristic $p$, \'etale cohomology with $\QQ_\ell$-coefficients is a Weil cohomology theory only when $\ell \neq p$; the corresponding role for $\ell = p$ is played by Berthelot's rigid cohomology. In that theory, the coefficient objects analogous to lisse $\ell$-adic sheaves are the overconvergent $F$-isocrystals. This expository article is a brief user's guide for these objects, including some features shared with $\ell$-adic cohomology (purity, weights) and
some features exclusive to the $p$-adic case (Newton polygons, convergence and overconvergence). The relationship between the two cases, via the theory of companions, will be treated in sequel papers.
\end{abstract}

\maketitle

\section{Introduction}

Let $k$ be a perfect field of characteristic $p>0$. For each prime $\ell \neq p$, \'etale cohomology with $\QQ_\ell$-coefficients constitutes a Weil cohomology theory for varieties over $k$, in which the coefficient objects of locally constant rank are the \emph{smooth (lisse) $\overline{\QQ}_\ell$-local systems}; when $k$ is finite, one also considers \emph{lisse Weil $\overline{\QQ}_\ell$-sheaves}. This article is a brief user's guide for the $p$-adic analogues of these constructions; we focus on basic intuition and statements of theorems, omitting essentially all proofs (except for a couple of undocumented variants of existing proofs, which we record in an appendix).

To obtain a Weil cohomology with $p$-adic coefficients, Berthelot defined the theory of \emph{rigid cohomology}. One tricky aspect of rigid cohomology is that it includes not one, but two analogues of the category of smooth $\ell$-adic sheaves: the category of \emph{convergent $F$-isocrystals} and the subcategory of \emph{overconvergent $F$-isocrystals}. The former category can be interpreted in terms of crystalline sites (see Theorem~\ref{T:isogeny category}), but the latter can only be described using analytic geometry. (We will implicitly use rigid analytic geometry, but any of the other flavors of analytic geometry over nonarchimedean fields can be used instead.)

The distinction between convergent and overconvergent $F$-isocrystals carries important functional load: overconvergent $F$-isocrystals seem to be the objects which are ``classically motivic'' whereas convergent $F$-isocrystals can arise from geometric constructions exclusive to characteristic $p$. For example, the
``crystalline companion'' to a compatible system of lisse Weil $\overline{\QQ}_\ell$-sheaves (i.e., the ``petit camarade cristalline'' in the sense of \cite[Conjecture~1.2.10]{deligne-weil2}) is an overconvergent $F$-isocrystal, which is irreducible if the $\ell$-adic objects are; however, in the category of convergent $F$-isocrystals the crystalline companion often acquires a nontrivial \emph{slope filtration}. A typical example is provided by the cohomology of a universal family of elliptic curves
(Example~\ref{E:elliptic unit-root}).

When transporting arguments from $\ell$-adic to $p$-adic cohomology, one can often assign the role of $\QQ_\ell$-local systems appropriately to either convergent or overconvergent $F$-isocrystals. In a few cases, one runs into difficulties because neither category seems to provide the needed features; on the other hand, in some cases the rich interplay between the constructions makes it possible to transport statements back to the $\ell$-adic side which do not seem to have any direct proof there.

One can continue the story by describing links between $\ell$-adic and $p$-adic coefficients via the theory of \emph{companions} as alluded to above. However, this would require setting aside the premise of a purely expository paper, as some new results would be required. We have thus chosen to defer this discussion to two sequel papers \cite{kedlaya-companions, kedlaya-companions2}.

\begin{notation}
Throughout this paper, let $k$ denote a perfect field of characteristic $p>0$ (as above), and let $X$ denote a smooth variety over $k$. By convention, we require varieties to be reduced separated schemes of finite type over $k$, but they need not be irreducible.
Let $K$ denote the fraction field of the ring of $p$-typical Witt vectors $W(k)$.
\end{notation}

\section{The basic constructions}
\label{sec:basic construction}

We begin by illustration the construction of convergent and overconvergent $F$-isocrystals
on smooth varieties, following Berthelot's original approach to rigid cohomology in which the constructions are fairly explicit but not overtly functorial. A more functorial approach, using suitably constructed sites, is described in \cite{lestum}, to which we defer for justification of all unproved claims (and for treatment of nonsmooth varieties).

We will use without comment the fact that coherent sheaves on affinoid spaces correspond to finitely generated modules over the ring of global sections (i.e., Kiehl's theorem in rigid analytic geometry). See for example \cite[Chapter~9]{bgr}.

\begin{defn} \label{D:convergent realization}
For $X$ affine, we construct the category $\FIsoc(X)$ of \emph{convergent $F$-iso\-crystals on $X$} as follows. Using a lifting construction of Elkik \cite{elkik} (or its generalization by Arabia \cite{arabia}), we can find a smooth affine formal scheme $P$ over $W(k)$ with special fiber $X$ and a lift $\sigma: P \to P$ of the absolute Frobenius on $X$. Let $P_K$ denote the Raynaud generic fiber of $P$ as a rigid analytic space over $K$.
Then an object of $\FIsoc(X)$ is a vector bundle $\calE$ on $P_K$ equipped with an integrable connection (i.e., an $\calO$-coherent $\mathcal{D}$-module) and an isomorphism $\sigma^* \calE \cong \calE$ of $\mathcal{D}$-modules (which we view as a semilinear action of $\sigma$ on $\calE$); a morphism in $\FIsoc(X)$ is a $\sigma$-equivariant morphism of $\mathcal{D}$-modules.

One checks as in \cite{lestum}
(by comparing to a more functorial definition)
that the functor $\FIsoc$ is a stack for the Zariski and \'etale topologies on $X$.
This leads to a definition of $\FIsoc(X)$ for arbitrary $X$. When $X = \Spec R$ is affine, we will occasionally write $\FIsoc(R)$ instead of $\FIsoc(\Spec R)$.
\end{defn}

\begin{theorem}[Ogus] \label{T:isogeny category}
Let $\mathcal{C}$ be the isogeny category associated to the category of crystals of finite $\calO_{X,\mathrm{crys}}$-modules.
Then $\FIsoc(X)$ is canonically equivalent to the category of objects of $\mathcal{C}$ equipped with $F$-actions (i.e., isomorphisms with their $F$-pullbacks).
\end{theorem}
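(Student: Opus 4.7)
My plan is to reduce to the affine case and then compare both sides through a common intermediate description in terms of modules with integrable connection on the formal lift $P$, followed by inverting $p$ and passing to the Raynaud generic fiber. Both sides are stacks for the Zariski topology on $X$: the right-hand side by Grothendieck-descent for the crystalline topos, and the left-hand side by construction (as noted in Definition~\ref{D:convergent realization}). So I may assume $X$ is affine and fix a smooth affine formal lift $P/W(k)$ with a Frobenius lift $\sigma$, as in Definition~\ref{D:convergent realization}.

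Next, I would invoke the classical equivalence of Berthelot-Ogus: for a smooth formal $W(k)$-scheme $P$ with special fiber $X$, the category of crystals of finite $\calO_{X,\mathrm{crys}}$-modules is equivalent to the category of finitely generated $p$-adically complete $\calO_P$-modules $M$ equipped with an integrable, $p$-adically quasi-nilpotent connection $\nabla$. The equivalence sends a crystal $\calE$ to its evaluation $M = \calE_P$ on $P$, with the connection recovered from the stratification provided by comparing the two pullbacks along the projections from the formal PD-envelope of the diagonal. The $F$-action on the crystal translates into a horizontal isomorphism $\sigma^* M \cong M$ (this uses that $\sigma$ is a lift of Frobenius, so that pulling the crystal structure back along $\sigma$ and then evaluating on $P$ agrees with $\sigma^*$ applied to $M$).

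Now I would pass to the isogeny category by tensoring with $\QQ$. Since $P$ is affine, Kiehl's theorem identifies finitely generated $\Gamma(P,\calO_P)[1/p]$-modules with coherent sheaves on the affinoid $P_K$, so that tensoring with $\QQ$ yields a coherent sheaf $\calE_K$ on $P_K$ with an integrable connection and a $\sigma$-semilinear horizontal isomorphism. To produce an object of $\FIsoc(X)$ I still have to upgrade $\calE_K$ from coherent to locally free and to check that $\nabla$ is \emph{convergent} on all of $P_K$ (rather than merely quasi-nilpotent $p$-adically on $P$). Both are consequences of the $F$-structure: iterating the semilinear isomorphism $\sigma^* \calE_K \cong \calE_K$ improves the radius of convergence of the Taylor series of $\nabla$ by arbitrary factors of $p^{1/p^n}$, forcing convergence on the full open polydisc of radius one, while the same iteration together with a standard argument shows that $\calE_K$ is locally free (torsion sections and non-free stalks are killed under enough Frobenius pullbacks). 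This is the main technical step, and it is where the crystalline description really differs from a purely differential one.

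Finally, the inverse functor starts from a convergent $F$-isocrystal $(\calE, \nabla, F)$ on $P_K$, takes a $\sigma$-stable $\calO_P$-lattice $M \subset \calE$ (which exists by a standard approximation argument using the $F$-action to clear denominators), and recognizes that the quasi-nilpotence of $\nabla$ on $M$ follows automatically from convergence. To see that this produces a crystal, one uses the evaluation on an arbitrary PD-thickening $U \hookrightarrow T$: étale-locally on $T$ one can lift the closed immersion $U \hookrightarrow X$ to a map $T \to P$ and transport $M$ along it, with independence of choice guaranteed by the stratification. One checks that the two functors are quasi-inverse and respect $F$-actions, completing the equivalence. The verifications are independent of the choice of $(P,\sigma)$ for the same reason that $\FIsoc(X)$ is, namely the stratification/Taylor series yields a canonical identification between constructions made with different lifts.
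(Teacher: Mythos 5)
The paper proves this statement by citation alone: the construction of the functor from crystals to $\FIsoc(X)$ and its full faithfulness are attributed to Ogus, and essential surjectivity to Berthelot's Th\'eor\`eme~2.4.2. Your sketch follows the same overall architecture as those references (localize to affine $X$, identify crystals with $p$-adically complete $\calO_P$-modules carrying a quasi-nilpotent integrable connection on a lift $P$, pass to the isogeny category, and use the Frobenius structure to obtain the convergence condition on $P_K$), but it slides past the hard half of the argument. In the step producing the inverse functor you assert that a $\sigma$-stable $\calO_P$-lattice ``exists by a standard approximation argument using the $F$-action to clear denominators.'' That lattice construction --- exhibiting a finitely generated $p$-adically complete $\calO_P$-submodule, suitably stable under $\nabla$ and Frobenius, which generates the given isocrystal after inverting $p$ --- is precisely the essential surjectivity statement the paper delegates to Berthelot. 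It is the genuinely nontrivial content of the theorem, because one must reconcile the rigid-analytic convergence condition over $P_K$ with the integral structure required of a crystal, and there is no a priori reason that a convergent $F$-isocrystal, defined only over the generic fiber $P_K$, admits such an integral descent. Without that step you have a fully faithful embedding of the $F$-equipped isogeny category $\mathcal{C}$ into $\FIsoc(X)$, not an equivalence.

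Two secondary corrections. The local freeness of $\calE_K$ is not a consequence of iterating Frobenius: as the paper notes in Remark~\ref{R:not locally free}, any coherent sheaf with integrable connection on a rigid space over a field of characteristic zero is automatically locally free, so the connection alone suffices; the role of $F$-iteration is to force convergence of the formal Taylor isomorphism (cf.\ Remark~\ref{R:no Frobenius}), not local freeness, and your argument conflates the two. Also, $P_K$ is an affinoid (e.g.\ the \emph{closed} unit polydisc when $X = \AAA^n_k$), so the phrase ``forcing convergence on the full open polydisc of radius one'' is imprecise: the convergence condition concerns the Taylor isomorphism on the tube of the diagonal, not a radius of convergence on the ambient space.
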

\begin{proof}
The functor from crystals to $\FIsoc(X)$ is exhibited in \cite{ogus} and shown therein to be fully faithful. For essential surjectivity, see \cite[Th\'eor\`eme~2.4.2]{berthelot-rigid}.
\end{proof}

\begin{remark}  \label{R:not locally free}
Theorem~\ref{T:isogeny category} implies that the category $\FIsoc(X)$ is abelian. This can also be seen more directly from the fact that (because $K$ is of characteristic zero) any coherent sheaf on a rigid analytic space over $K$ admitting a connection is automatically locally free. 
(See \cite[Proposition~1.2.6]{kedlaya-goodformal1} for a general argument to this effect.)

Even so, a general object of $\FIsoc(X)$ need not correspond to a crystal of \emph{locally free} $\calO_{X,\mathrm{crys}}$-modules, except in the unit-root case (see Theorem~\ref{T:unit root1} below). However, using the fact that reflexive modules on regular schemes are locally free in dimension 2, one sees that for $\calE \in \FIsoc(X)$, there exists an open dense subspace $U$ of $X$ with $\codim(X-U, X) \geq 2$ for which the restriction of $\calE$ to $\FIsoc(U)$ can be realized as a crystal of locally free $\calO_{X,\mathrm{crys}}$-modules.
(See \cite[Lemma~2.5.1]{crew-f} for a detailed discussion.)
In some cases, one can promote the desired results from $U$ back to $X$ using purity for isocrystals; see Theorem~\ref{T:purity}.
\end{remark}

\begin{defn} \label{D:overconvergent realization}
For $X \to Y$ an open immersion of $k$-varieties with $X$ and $Y$ affine (but $Y$ not necessarily smooth), we construct the category $\FIsoc(X,Y)$ of \emph{isocrystals on $X$ overconvergent within $Y$} as follows. Again using the results of Elkik or Arabia, we can find an affine formal scheme $P$ over $W(k)$ with special fiber $Y$ which is smooth in a neighborhood of $X$ and a lift $\sigma: Q \to Q$ of absolute Frobenius, for $Q$ the open formal subscheme of $P$ supported on $Y$, which extends to a neighborhood of $Q_K$ in $P_K$ for the Berkovich topology (or in more classical terminology, a \emph{strict neighborhood} of $Q_K$ in $P_K$). Then an object of $\FIsoc(X,Y)$ is a vector bundle $\calE$ on some strict neighborhood equipped with an integrable connection and an isomorphism $\sigma^* \calE \cong \calE$ of $\mathcal{D}$-modules; a morphism in $\FIsoc(X,Y)$ is a $\sigma$-equivariant morphism of $\mathcal{D}$-modules defined on some strict neighborhood of $Q_K$, with two morphisms considered equal if they agree on some (hence any) strict neighborhood on which they are both defined. In particular, restriction of a bundle from one strict neighborhood to another is an isomorphism in $\FIsoc(X,Y)$.

One again checks as in \cite{lestum}
that the functor $\FIsoc$ is a stack for the Zariski and \'etale topologies on $Y$.
This leads to a definition of $\FIsoc(X,Y)$ for an arbitrary open immersion $X \to Y$.
\end{defn}

\begin{remark} \label{R:pullback functoriality}
Given a commutative diagram
\[
\xymatrix{
X' \ar[r] \ar[d] & Y' \ar[d] \\
X \ar[r] & Y
}
\]
in which $X' \to Y'$ is again an open immersion of $k$-varieties with $X'$ smooth,
one obtains a pullback functor $\FIsoc(X, Y) \to \FIsoc(X', Y')$. 
If $X' = X$, then this pullback functor is obviously faithful;
we will see later that it is also full (Theorem~\ref{T:fully faithful1}).
\end{remark}

\begin{lemma}[Berthelot] \label{L:change compactification}
Let $f: Y' \to Y$ be a proper morphism such that $f^{-1}(X) \to X$ is an isomorphism.
Then the pullback functor $\FIsoc(X, Y) \to \FIsoc(X, Y')$ is an equivalence of categories.
\end{lemma}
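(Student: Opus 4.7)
The plan is to unwind both sides via Definition~\ref{D:overconvergent realization} and reduce the claim to a geometric statement about $\tilde{f}$ on strict neighborhoods of the tubes of $X$. Using the stack property of $\FIsoc$ for the Zariski topology on $Y$, I may assume $Y$ and $Y'$ are affine. Choose a formal lift $P$ of $Y$ (smooth in a neighborhood of $X$) and a formal lift $P'$ of $Y'$, together with a morphism $\tilde{f}: P' \to P$ lifting $f$ and compatible Frobenius lifts on strict neighborhoods of the tubes; the existence of $\tilde{f}$, after possibly refining $P'$ in a formal neighborhood of $X$, uses the formal smoothness of $P$ near $X$. Let $\tilde{f}_K: P'_K \to P_K$ denote the induced morphism of Raynaud generic fibers; the pullback functor in the lemma is then induced by $\tilde{f}_K^*$ on $\mathcal{D}$-modules defined on strict neighborhoods.

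The essential geometric input, due to Berthelot, is that because $f$ is proper and $f^{-1}(X) \to X$ is an isomorphism, the morphism $\tilde{f}_K$ restricts to an isomorphism of tubes $]X[_{P'} \to {]X[_P}$, and moreover, for every sufficiently small strict neighborhood $V$ of $]X[_P$ in $P_K$, the preimage $\tilde{f}_K^{-1}(V)$ is a strict neighborhood of $]X[_{P'}$ in $P'_K$ on which $\tilde{f}_K$ restricts to an isomorphism onto $V$. The point is that $\tilde{f}_K$ is proper in the rigid analytic sense, so its non-isomorphism locus is a closed analytic subspace whose specialization is contained in $Y' \setminus X$; by shrinking $V$, one ensures $\tilde{f}_K^{-1}(V)$ avoids this locus. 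Conversely, such preimages form a cofinal system of strict neighborhoods of $]X[_{P'}$.

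Granted this, the rest is formal. Full faithfulness follows because morphisms in both $\FIsoc(X,Y)$ and $\FIsoc(X,Y')$ are $\sigma$-equivariant $\mathcal{D}$-module maps on strict neighborhoods, identified under restriction, and the cofinality makes pullback along $\tilde{f}_K$ a bijection on $\Hom$ sets. For essential surjectivity, given $\calE' \in \FIsoc(X,Y')$ defined on some strict neighborhood $V'$ of $]X[_{P'}$, I pick a strict neighborhood $V$ of $]X[_P$ with $\tilde{f}_K^{-1}(V) \subseteq V'$ and with $\tilde{f}_K|_{\tilde{f}_K^{-1}(V)}$ an isomorphism onto $V$; the pushforward of $\calE'|_{\tilde{f}_K^{-1}(V)}$ along this isomorphism is a vector bundle on $V$ with integrable connection, inherits an $F$-structure via the compatible Frobenius lifts, and defines an object of $\FIsoc(X,Y)$ whose pullback to $\tilde{f}_K^{-1}(V)$ recovers $\calE'|_{\tilde{f}_K^{-1}(V)}$. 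The main obstacle is precisely the geometric input that $\tilde{f}_K$ becomes an isomorphism on small enough strict neighborhoods; this is where properness of $f$, combined with $f$ being iso over $X$, is used essentially, and it requires rigid-analytic properness (Raynaud's formal model theory) to control the behavior of $\tilde{f}_K$ near the tubes.
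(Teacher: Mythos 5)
The paper itself offers no proof of this lemma, only citations to Berthelot's unpublished preprint and to Le Stum's book, so I am comparing your attempt against the strategy in those references rather than against something printed in the paper. Your outline has the right core idea — reduce to a cofinal identification of strict-neighborhood systems via a lift $\tilde{f}_K$, using that $\tilde{f}_K$ is an isomorphism near $]X[$ because $f$ is an isomorphism over $X$ — and this is indeed the geometric content of Berthelot's argument (the strong fibration theorem).

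There is, however, a genuine gap at the very first step: you cannot ``assume $Y$ and $Y'$ are affine.'' The Zariski-stack property over $Y$ lets you assume $Y$ affine, but then $Y'=f^{-1}(Y)$ is merely proper over $Y$, and a proper morphism to an affine need not be affine (a blowup along a closed subscheme of $Y\setminus X$ is already a counterexample). Consequently the explicit realization of $\FIsoc(X,Y')$ in Definition~\ref{D:overconvergent realization}, which produces an \emph{affine} formal lift $P'$ of $Y'$, does not apply; for non-affine $Y'$ one must either define $\FIsoc(X,Y')$ by gluing over an affine cover of $Y'$, or embed $Y'$ into $Y\times\PP^n$ and work with a closed formal subscheme of $P\times\widehat{\PP}^n$. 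In either case a global Frobenius-compatible lift $\tilde f\colon P'\to P$ need not exist, only local ones, and this is precisely the technical point that Berthelot and Le Stum negotiate — Le Stum through his formalism of frames, Berthelot by factoring through the graph embedding $Y'\hookrightarrow Y\times Y'$ and invoking the strong fibration theorem for a closed immersion of pairs followed by a smooth projection. Your proof needs some such mechanism to replace the false affine reduction.

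A secondary remark: your justification that ``shrinking $V$ avoids the non-isomorphism locus'' is too vague. The clean reason is that any lift $\tilde f$ (once it exists) of $f$, which is an isomorphism over $X$, is automatically \'etale with unit Jacobian and degree one on a formal neighborhood $Q'$ of $X$ in $P'$, hence an open immersion $Q'\hookrightarrow P$ onto a formal neighborhood $Q$ of $X$; then $Q'_K$ is a strict neighborhood of $]X[_{P'}$ and $Q_K$ of $]X[_P$, and the desired cofinality is obtained by intersecting any strict neighborhood with $Q'_K$, resp.\ $Q_K$. Phrased that way, this step is correct; you should make the \'etale/open-immersion mechanism explicit rather than appealing to a ``non-isomorphism locus'' whose avoidance under shrinking is not obvious.
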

\begin{proof}
The original but unpublished reference is \cite[Th\'eor\`eme 2.3.5]{berthelot-rigid}.
An alternate reference is \cite[Theorem~7.1.8]{lestum}.
\end{proof}

\begin{defn}
We define the category $\FIsoc^\dagger(X)$ of \emph{overconvergent $F$-isocrystals} on $X$ to be $\FIsoc(X,Y)$ for some (hence any, by Lemma~\ref{L:change compactification}) open immersion $X \to Y$ with $Y$ a proper $k$-variety. In particular, if $X$ itself is proper, then $\FIsoc^\dagger(X) = \FIsoc(X)$; in general, $\FIsoc^\dagger(X)$ is a stack for the Zariski and \'etale topologies on $X$.
\end{defn}

\begin{remark} \label{R:etale pushforward}
Retain notation as in Remark~\ref{R:pullback functoriality}.
If $X' \to X$ is finite \'etale of constant degree $d>0$ and $Y' \to Y$ is finite flat, 
one also obtains a pushforward functor $\FIsoc(X', Y') \to \FIsoc(X, Y)$ which multiplies ranks by $d$. In particular, if $X' \to X$ is finite \'etale, we obtain a pushforward functor $\FIsoc^\dagger(X') \to \FIsoc^\dagger(X)$.
\end{remark}

\begin{remark} \label{R:affine cover}
The pushforward functoriality of $\FIsoc$ is often used in conjunction with the following observation (a higher-dimensional analogue of Belyi's theorem in positive characteristic): 
any projective variety over $k$ of pure dimension $n$
admits a finite morphism to $\PP^n_k$ which is \'etale over $\AAA^n_k$
\cite{kedlaya-more-etale}.
Moreover, any given zero-dimensional subscheme of the smooth locus may be forced into the inverse image of $\AAA^n_k$;
in particular, the smooth locus is covered by open subsets which are finite \'etale over $\AAA^n_k$ (via various maps).
\end{remark}

\begin{remark} \label{R:closed point evaluation}
Let $\varphi: K \to K$ be the Witt vector Frobenius.
In case $X = \Spec k$, the categories $\FIsoc(X)$ and $\FIsoc^\dagger(X)$ coincide,
and may be described concretely as the category of finite-dimensional $K$-vector spaces
equipped with isomorphisms with their $\varphi$-pullbacks.

In general, choose any closed point $x \in X$ with residue field $\ell$ and put $L = \Frac W(\ell)$. Then the pullback functors $\FIsoc(X) \to \FIsoc(x), \FIsoc^\dagger(X) \to \FIsoc^\dagger(x)$ define fiber functors in $L$-vector spaces; however, these are not neutral fiber functors unless $\ell = \FF_p$.
For more on the Tannakian aspects of the categories $\FIsoc(X)$ and $\FIsoc^\dagger(X)$, see \cite{crew-mono};
for the special case of a finite base field, see also the discussion starting in \S\ref{sec:finite fields}.
\end{remark}

Much of the basic analysis of convergent and overconvergent $F$-isocrystals involves ``local models'' of the global statements under consideration. We describe the basic setup using notation as in \cite{dejong-barsotti}. 

\begin{remark}
Put $\Omega = W(k) \llbracket t \rrbracket$. Let $\Gamma$ be the $p$-adic completion of $W(k) ((t))$. Let $\Gamma_c$ be the subring of $\Gamma$ consisting of Laurent series
convergent in some region of the form $* \leq |t| < 1$. Each of these rings carries a Frobenius lift $\sigma$ with $\sigma(t) = t^p$ and a derivation $\frac{d}{dt}$. 

Define the categories
\[
\FIsoc(k \llbracket t \rrbracket), \FIsoc(k ((t))), \FIsoc^\dagger(k((t)))
\]
to consist of finite projective modules over the respective rings $\Omega[p^{-1}], \Gamma[p^{-1}], \Gamma_c[p^{-1}]$ equipped with compatible actions of $\sigma$ and $\frac{d}{dt}$. Here compatibility means that the commutation relation between $\sigma$ and $\frac{d}{dt}$ on the modules is the same as on the base ring:
\[
\frac{d}{dt} \circ \sigma = pt^{p-1} \sigma \circ \frac{d}{dt}.
\]

For some purposes, it is useful to consider also the ring $\calR$ consisting of the union of the rings of rigid analytic functions over $K$ on annuli of the form $* \leq |t| < 1$
(commonly called the \emph{Robba ring} over $K$). 
Note that $\Gamma_c$ is the subring of $\calR$ consisting of Laurent series with coefficients in $W(k)$. Let $\calR^+$ be the subring of $\calR$ consisting of formal power series (i.e., with only nonnegative powers of $t$); this is the ring of rigid analytic functions on the open unit $t$-disc over $K$.

Define the categories
\[
\FIsoc^\ddagger(k \llbracket t \rrbracket), \FIsoc^\ddagger(k((t)))
\]
to consist of finite projective modules over the respective rings $\calR^+, \calR$ equipped with compatible actions of $\sigma$ and $\frac{d}{dt}$ (note that this use of $\ddagger$ is not standard notation). We then have faithful functors
\[
\xymatrix{
\FIsoc(k \llbracket t \rrbracket) \ar[r] \ar[d] & \FIsoc^\dagger(k((t))) \ar[r] \ar[d] & \FIsoc(k((t))) \\
\FIsoc^\ddagger(k \llbracket t \rrbracket) \ar[r] & \FIsoc^\ddagger(k((t))) &
}
\]
but no comparison between $\FIsoc(k((t)))$ and $\FIsoc^\ddagger(k((t)))$.
\end{remark}

\begin{remark} \label{R:no Frobenius}
One can also define convergent and overconvergent isocrystals without Frobenius structure (in both the global and local settings);
on these larger categories, the fiber functors described in Remark~\ref{R:closed point evaluation} become neutral. This corresponds on the $\ell$-adic side to passing from representations of arithmetic fundamental groups to representations of geometric fundamental groups. 
However, there are some subtleties hidden in the construction: one must include an additional condition on the convergence of the formal Taylor isomorphism (which is forced by the existence of a Frobenius structure).
\end{remark}

\begin{remark}
One can also define convergent and overconvergent isocrystals (with or without Frobenius structure)
on nonsmooth varieties. For $X$ affine, this is done by choosing a smooth affine variety $Y$ containing $X$ as a closed subscheme, lift to a smooth formal scheme, and work on the inverse image of $X$ in the generic fiber of the lift under the specialization morphism
(the so-called \emph{tube} of $X$) in the convergent case, or some strict neighborhood thereof in the overconvergent case. See again \cite{berthelot-rigid} or \cite{lestum}.
\end{remark}

\section{Slopes}

We next discuss a basic feature of isocrystals admitting no $\ell$-adic analogue: the theory of \emph{slopes}. We begin with the situation at a point.

\begin{defn}
Let $r,s$ be integers with $s>0$ and $\gcd(r,s) = 1$. Let $\calF_{r/s} \in \FIsoc(k)$
be the object corresponding (via Remark~\ref{R:closed point evaluation}) to the $K$-vector space on the basis $\be_1,\dots,\be_s$ equipped with the $\varphi$-action
\[
\varphi(\be_1) = \be_2, \quad \dots, \quad \varphi(\be_{s-1}) = \be_s, \quad \varphi(\be_s) = p^r \be_1.
\]
One checks easily that 
\begin{equation} \label{eq:pure hom}
\Hom_{\FIsoc(k)}(\calF_{r/s}, \calF_{r'/s'}) = \begin{cases} D_{r,s} & r'/s' = r/s  \\
0 & r'/s' \neq r/s \end{cases}
\end{equation}
where $D_{r,s}$ denotes the division algebra over $K$ of degree $s$ and invariant $r/s$.
\end{defn}

\begin{theorem}[Dieudonn\'e--Manin] \label{T:DM}
Suppose that $k$ is algebraically closed.
Then every $\calE \in \FIsoc(\Spec k)$ is uniquely isomorphic to a direct sum
\[
\bigoplus_{r/s \in \QQ} \calE_{r/s}
\]
in which each factor $\calE_{r/s}$ is (not uniquely) isomorphic to a direct sum of copies of $\calF_{r/s}$. (Note that uniqueness is forced by \eqref{eq:pure hom}.)
\end{theorem}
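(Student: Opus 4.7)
The plan is to reformulate concretely: $\FIsoc(\Spec k)$ is the category of pairs $(V,\Phi)$ with $V$ a finite-dimensional $K$-vector space and $\Phi: V\to V$ a $\varphi$-semilinear bijection, so the theorem becomes a structure theorem for such pairs over an algebraically closed perfect field $k$.

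The core technical input is an \emph{eigenvector lemma}: for every nonzero $(V,\Phi)$ there exist integers $r,s$ with $s>0$ and a nonzero $v\in V$ satisfying $\Phi^s v = p^r v$. To prove it, I would pick integers $r,s$ and a $W(k)$-lattice $M\subset V$ such that $\Phi^s M\subseteq p^r M$ but no tighter inclusion holds (so $(r,s)$ lies on the Newton polygon of $(V,\Phi)$); then the $\varphi^s$-semilinear operator $\Psi := p^{-r}\Phi^s$ preserves $M$ and its reduction $\overline{\Psi}$ on $M/pM$ has nonzero ``\'etale part'' $N := \bigcap_n \overline{\Psi}^n(M/pM)$. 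Lang's theorem applied to $\GL(N)$ over the algebraically closed $k$ yields a nonzero $\overline{v}\in N$ with $\overline{\Psi}\,\overline{v}=\overline{v}$; this is then lifted to an honest $v\in V$ with $\Psi v = v$ (that is, $\Phi^s v = p^r v$) by a $p$-adic Newton iteration using completeness of $W(k)$. The lifting is the main technical obstacle, and uses perfectness and algebraic closure of $k$ to solve the reduced linear equation at each stage.

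Given such an eigenvector, after replacing $v$ by a suitable element of its $\Phi$-cyclic span (invoking algebraic closure of $k$ to extract roots in the endomorphism algebra of the cyclic span) I may assume $\gcd(r,s)=1$; a short valuation argument then shows $v,\Phi v,\ldots,\Phi^{s-1}v$ are $K$-linearly independent, so their $K$-span is a subobject of $\calE$ isomorphic to $\calF_{r/s}$.

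Finally, I would deduce the decomposition by establishing semisimplicity of $\FIsoc(\Spec k)$: one checks $\Ext^1(\calF_{r/s},\calF_{r'/s'})=0$ for all slopes by applying the eigenvector lemma to an extension $0\to\calF_{r'/s'}\to\calE\to\calF_{r/s}\to 0$ to produce a $\Phi$-eigenvector of slope $r/s$ in $\calE$ lifting a generator of $\calF_{r/s}$, yielding a splitting (this reduces to solving a linear equation of the form $(\Phi^N-p^M)x=y$ in $\calF_{r'/s'}$, whose relevant operator is surjective when $r/s\neq r'/s'$ and controlled by $\End(\calF_{r'/s'})=D_{r',s'}$ when the slopes coincide). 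Combined with \eqref{eq:pure hom}, semisimplicity yields the decomposition $\calE=\bigoplus_{r/s}\calE_{r/s}$ with $\calE_{r/s}$ the $\calF_{r/s}$-isotypic subobject; directness of the sum and uniqueness then both follow from the Hom-vanishing in \eqref{eq:pure hom}.
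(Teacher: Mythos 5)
The paper does not actually prove Theorem~\ref{T:DM}; it simply cites Manin's original article, Kedlaya's book, and Ding--Ouyang. So there is no in-paper argument to compare against, and your task was effectively to reconstruct the classical Dieudonn\'e--Manin proof. Your outline is a recognizable and correct form of one standard route (close in spirit to the Ding--Ouyang simplification cited in the paper): produce an eigenvector $\Phi^s v = p^r v$ by reducing modulo $p$, taking the \'etale part, finding a Frobenius-fixed vector there, and lifting it by a $p$-adic successive approximation; then deduce semisimplicity from vanishing of $\Ext^1$, and combine with \eqref{eq:pure hom}. The structure of the argument is sound.

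A few points deserve sharpening before this would be a complete proof. First, the requirement that $\Phi^s M \subseteq p^r M$ already forces $r/s$ to be at most the smallest slope; combined with your parenthetical that $(r,s)$ lies on the Newton polygon, this pins $r/s$ to be exactly the minimal slope. It would be clearer to say this explicitly, and one should also justify why a lattice $M$ with $\Phi^s M\subseteq p^rM$ exists (one saturates an arbitrary lattice under $p^{-r}\Phi^s$; termination uses precisely the fact that no slope is smaller than $r/s$). Second, one still needs to argue that for such an $M$ the \'etale part $N=\bigcap_n\overline\Psi^n(M/pM)$ is nonzero; this is true because the slope-zero isoclinic part of $\Psi=p^{-r}\Phi^s$ is nonzero, but it is a lemma, not immediate from the displayed containments. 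Third, ``Lang's theorem'' is slightly misattributed: Lang's theorem proper is about connected algebraic groups over finite fields, whereas here one needs the (related but distinct) statement that a Frobenius-semilinear bijection of a finite-dimensional vector space over an algebraically closed field of characteristic $p$ admits a basis of fixed vectors; this is proved by an Artin--Schreier/Hilbert 90 style induction and does use algebraic closure, but not via Lang's theorem as usually stated. Fourth, the reduction to $\gcd(r,s)=1$ is genuinely nontrivial (it amounts to re-running the eigenvector argument for $\Theta=p^{-r/d}\Phi^{s/d}$ on the cyclic span, or equivalently to identifying idempotents in the endomorphism algebra); ``extracting roots'' is a bit too quick. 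Finally, for the $\Ext^1$-vanishing, the key point is the surjectivity of operators of the shape $\varphi^N - p^M$ (or $1-p^M\varphi^{-N}$), which you do identify; note that this is essentially the same reduction-mod-$p$-plus-Newton-iteration analysis used for the eigenvector lemma, so it is worth organizing the write-up so that this computation is done once and invoked twice. None of these are fatal; they are the places where the details must be filled in carefully.
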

\begin{proof}
This is the standard Dieudonn\'e--Manin classification theorem, the original reference for which is \cite{manin}. See also \cite[Theorem~14.6.3]{kedlaya-book} and
\cite{ding-ouyang}.
\end{proof}

\begin{defn}
For $\calE \in \FIsoc(k)$, choose an algebraic closure $\overline{k}$ of $k$
and let $\calE'$ be the pullback of $\calE$ to $\FIsoc(\overline{k})$. Then
the direct sum decomposition of $\calE$ given by
Theorem~\ref{T:DM} descends to $\calE$ (and is independent of the choice of $\overline{k}$).
We define the \emph{slope multiset} of $\calE$ to be the multisubset of $\QQ$ of cardinality equal to the rank of $\calE$ in which the multiplicity of $r/s$ equals $\rank \calE_{r/s}$;
the slope multiset is additive in short exact sequences \cite[Lemma~1.3.4]{katz-slope}.
We arrange the elements of the slope multiset into a convex Newton polygon with left endpoint $(0,0)$, called the \emph{slope polygon} of $\calE$. Note that the vertices of the slope polygon belong to $[0,\rank(\calE)] \times \ZZ$.

For $\calE \in \FIsoc(X)$, we define the \emph{slope multiset} and \emph{slope polygon}
of $\calE$ at $x \in X$ by pullback to $\Spec \kappa(x)^{\perf}$.
We say that $\calE$ is \emph{isoclinic} if the slope multisets at all points are equal to a single repeated value; if that value is 0, we also say that $\calE$ is \emph{unit-root} or \emph{\'etale}. By \eqref{eq:pure hom}, there are no nonzero morphisms between isoclinic objects of distinct slopes.
Moreover, the isoclinic and unit-root properties are preserved by formation of subquotients and extensions (between objects of the same slope).
\end{defn}

\begin{remark} \label{R:cyclic vector}
Since the action of Frobenius on an object of $\FIsoc(k)$ can be characterized by writing down the matrix of action on a single basis, one might wonder whether the Newton polygon of the characteristic polynomial of said matrix coincides with the slope polygon.
In general this is false; see \cite[\S 1.3]{katz-slope} for a counterexample. However,
it does hold when the basis is the one derived from a \emph{cyclic vector} for the action of Frobenius \cite[Lemma~5.2.4]{kedlaya-revisited}, i.e., when the matrix is the \emph{companion matrix} associated to its characteristic polynomial.
\end{remark}

\begin{remark} \label{R:rank 1}
Every $\calE \in \FIsoc(X)$ of rank 1 is isoclinic of some integer slope; this can either be proved directly or deduced from Theorem~\ref{T:polygon variation} below.
\end{remark}

\begin{remark} \label{R:sign convention}
The sign convention for slopes used here is the one from \cite{katz-slope}. 
However, in certain related contexts it is more natural to use the opposite sign convention. For example, in the theory of $\varphi$-modules over the Robba ring,
the sign convention taken here is used in \cite{kedlaya-locmono}; however,
this theory can be reformulated in terms of vector bundles on curves \cite{fargues, fargues-fontaine-durham, fargues-fontaine} and the opposite sign convention is the one consistent with geometric invariant theory. 
\end{remark}

Using slopes, we can now articulate two results that explain the relationship between \'etale $\QQ_p$-local systems and isocrystals. The first result says that in a sense, there are ``too few'' \'etale $\QQ_p$-local systems for them to serve as a good category of coefficient objects.

\begin{theorem}[Katz, Crew] \label{T:unit root1}
The category of unit-root objects in $\FIsoc(X)$ is equivalent to the category of \'etale $\QQ_p$-local systems on $X$. In particular, if $X$ is connected, this category is equivalent to the category of continuous representations of $\pi_1(X, \overline{x})$
on finite-dimensional $\QQ_p$-vector spaces (for any geometric point $\overline{x}$ of $X$).
\end{theorem}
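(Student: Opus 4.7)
The plan is to construct the equivalence locally using Frobenius descent, then glue. First, reduce to $X$ affine and connected, and choose a smooth affine formal $W(k)$-scheme $P$ with special fiber $X$ and Frobenius lift $\sigma$, so that an object $\calE \in \FIsoc(X)$ is a vector bundle on $P_K$ with integrable connection and isomorphism $\sigma^* \calE \cong \calE$. Unit-root means the slope polygon is identically zero, so by Theorem~\ref{T:DM} applied at each geometric point, the Frobenius matrix in a suitable basis reduces to a unit matrix modulo the maximal ideal. I would first exhibit a $\sigma$-stable $\calO_P$-lattice $\calE_0 \subset \calE$ for which the induced map $\sigma^* \calE_0 \to \calE_0$ is an isomorphism (not merely an injection with bounded cokernel); this uses that being unit-root is an integral property, plus Dieudonn\'e--Manin fiberwise.

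The key construction then proceeds by Artin--Schreier-style descent. For any finite \'etale morphism $X' \to X$, lift uniquely to a finite \'etale $P' \to P$, and set $\calL(X') := (\calE_0 \otimes_{\calO_P} \calO_{P'})^{\sigma = 1}$. I would show the \'etale sheafification is locally constant of the expected rank by verifying that $\sigma - 1$ is \'etale-locally surjective on any Frobenius-stable finite free module with unit Frobenius matrix: mod $p$ this is a Lang-type statement about surjectivity of $\sigma - \mathrm{id}$ on $\GL_n$ over affine $\overline{k}$-schemes, and successive approximation mod $p^n$ followed by $p$-adic completeness lifts this to the integral statement. Inverting $p$ produces an \'etale $\QQ_p$-local system, and I would check the output is independent of the choices of lattice $\calE_0$ and of $(P, \sigma)$ by the usual Taylor isomorphism comparing two Frobenius lifts over a smooth base.

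For the inverse direction, start with an \'etale $\QQ_p$-local system, choose a $\ZZ_p$-lattice $\calL$, and form $\calE_0 := (\calL \otimes_{\ZZ_p} \calO_{\widetilde{P}})^{\pi_1}$ on $P$, where $\widetilde{P}$ is the pro-\'etale tower trivializing $\calL$; this is naturally a $\sigma$-stable locally free $\calO_P$-module of the right rank. The connection is then forced by the Frobenius structure: in local coordinates the relation $\frac{d}{dt} \circ \sigma = p t^{p-1} \sigma \circ \frac{d}{dt}$ contracts $\nabla$ by a factor of $p$ under iteration, so there is a unique integrable $\nabla$ that vanishes on $\calL$ and extends $\calO_P$-linearly. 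Inverting $p$ gives an object of $\FIsoc(X)$, and quasi-inverseness of the two constructions follows from tracing through the definitions on both sides.

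The main obstacle, I expect, is the Lang-type step: showing that for a finite free $W(k')$-module equipped with a $\sigma$-action whose mod-$p$ reduction is invertible, the $\sigma$-invariants form a free $\ZZ_p$-module of maximal rank after an \'etale extension of $k'$, in a manner functorial enough to produce a genuine \'etale sheaf. This is where the unit-root hypothesis enters essentially, and where one sees that the analytic data (connection, rigid-analytic structure on $P_K$) is completely rigidified by the Frobenius structure in this slope-$0$ setting. Once the local equivalence is established, \'etale descent handles sheaf-theoretic globalization, and the stack property of $\FIsoc$ from Definition~\ref{D:convergent realization} propagates the result to arbitrary $X$.
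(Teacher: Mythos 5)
Your sketch is the classical Katz--Crew construction, which is precisely what the paper's proof citation to \cite{crew-f} points to: produce a $\sigma$-stable unit-root lattice, trivialize it \'etale-locally by a Lang-type torsor argument, and recover the (automatically convergent) connection by the Dwork contraction trick applied to the Frobenius structure. The paper gives no independent argument, so up to this level of detail your route and the cited one coincide.

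The one remark worth making is that the real weight of the theorem is carried by the step you dispatch in a sentence: the existence of a $\sigma$-stable finitely generated $\calO_P$-lattice $\calE_0 \subset \calE$ on which $\sigma^* \calE_0 \to \calE_0$ is an isomorphism. Fiberwise Dieudonn\'e--Manin tells you only that each geometric fiber is unit-root; it does not follow formally that the Frobenius orbit $\sum_n F_n(\sigma^{n*} M)$ of an arbitrary lattice $M$ stays bounded, and producing this lattice (equivalently, showing that a unit-root convergent $F$-isocrystal comes from a unit-root $F$-crystal) is where the substance of Crew's argument lives. Your phrase that ``unit-root is an integral property'' names the needed fact but does not supply it. Once the lattice is in hand, the successive-approximation Lang argument (invertible Frobenius matrix mod $p$, lift mod $p^n$, pass to the limit), the twist construction $\calE_0 = (\calL \otimes \calO_{\widetilde{P}})^{\pi_1}$ for the quasi-inverse using a $\pi_1$-stable $\ZZ_p$-lattice (which exists by compactness of $\pi_1$), and the observation that the Frobenius structure forces a unique convergent connection, are all correct as sketched and are exactly the remaining ingredients of the cited proof.
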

\begin{proof}
See \cite[Theorem~2.1]{crew-f}.
\end{proof}

The second result says that on the other hand, there are also ``too many'' \'etale $\QQ_p$-local systems for them to serve as a good category of coefficient objects.

\begin{defn}
An \'etale $\QQ_p$-local system $V$ on $X$ is \emph{unramified} if 
the corresponding representations of the \'etale fundamental groups of the connected components of $X$ restrict trivially to all inertia groups. If $X$ admits an open immersion into a smooth proper variety $\overline{X}$, then by Zariski--Nagata purity, $V$ is unramified if and only if $V$ extends (necessarily uniquely) to an \'etale $\QQ_p$-local system on $\overline{X}$.
We say $V$ is \emph{potentially unramified} if there exists a finite \'etale cover $X' \to X$ such that the pullback of $V$ to $X'$ is unramified.
\end{defn}

\begin{theorem}[Tsuzuki] \label{T:unit root2}
In the equivalence of Theorem~\ref{T:unit root1}, the unit-root objects in $\FIsoc^\dagger(X)$ form a full subcategory of $\FIsoc(X)$ corresponding to the category of potentially unramified \'etale $\QQ_p$-local systems on $X$.
\end{theorem}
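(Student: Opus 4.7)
The plan is to proceed in three stages. \emph{Full faithfulness} of the inclusion of unit-root objects of $\FIsoc^\dagger(X)$ into $\FIsoc(X)$ is immediate from Theorem~\ref{T:fully faithful1}; composing with the Katz--Crew equivalence of Theorem~\ref{T:unit root1} then yields a fully faithful embedding of unit-root objects of $\FIsoc^\dagger(X)$ into the category of \'etale $\QQ_p$-local systems on $X$. It therefore remains to identify the essential image with the potentially unramified objects.

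For the \emph{forward direction}, let $\calE \in \FIsoc^\dagger(X)$ be unit-root with corresponding \'etale $\QQ_p$-local system $V$. Choose an open immersion $X \hookrightarrow \overline{X}$ into a proper $k$-variety; using de Jong's alterations we may assume $\overline{X}$ is smooth and $D = \overline{X} - X$ is a strict normal crossings divisor. Near the generic point of each component of $D$, the restriction of $\calE$ lies in $\FIsoc^\dagger(k((t)))$ and is unit-root. The key input is Tsuzuki's local monodromy theorem for unit-root objects: any unit-root object of $\FIsoc^\dagger(k((t)))$ becomes an object of $\FIsoc(k' \llbracket u \rrbracket)$ after pullback along some finite separable extension $k((t)) \hookrightarrow k'((u))$. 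Patching these local covers via an Abhyankar-type argument produces a finite \'etale cover $X' \to X$ such that $V|_{X'}$ is unramified along each boundary component of a compactification of $X'$; Zariski--Nagata purity then extends $V|_{X'}$ to an \'etale local system on the whole compactification, showing that $V$ is potentially unramified.

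For the \emph{reverse direction}, let $V$ be potentially unramified and let $\calE \in \FIsoc(X)$ be the unit-root object supplied by Theorem~\ref{T:unit root1}. Choose a finite \'etale $h: X' \to X$, which we may take to be Galois with group $G$, over which $V|_{X'}$ extends to an \'etale local system on a smooth compactification $\overline{X}'$. Theorem~\ref{T:unit root1} produces a unit-root $\overline{\calF} \in \FIsoc(\overline{X}')$ whose restriction to $X'$ is $h^* \calE$; since $\overline{X}'$ is proper, $\FIsoc(\overline{X}') = \FIsoc^\dagger(\overline{X}')$, so we obtain $\calF \in \FIsoc^\dagger(X')$ lifting $h^* \calE$. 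The $G$-action on $V$ endows $\calF$ with descent data along $h$; finite \'etale descent in $\FIsoc^\dagger$ (realized concretely via the pushforward of Remark~\ref{R:etale pushforward} together with a summand argument) then yields an object of $\FIsoc^\dagger(X)$ whose image in $\FIsoc(X)$ is $\calE$.

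The main obstacle is Tsuzuki's local monodromy theorem invoked in the forward direction. Its proof exploits the interplay between overconvergence and the unit-root hypothesis: the matrix of Frobenius on $\calE$, a priori only convergent on an annulus $* \leq |t| < 1$, is forced after a suitable change of basis to have entries in the bounded subring $W(k)\llbracket t \rrbracket[t^{-1}]$, because iterating Frobenius on a unit-root module produces series with bounded coefficients. A Hensel-type argument combined with Krasner's lemma then trivializes the residual Frobenius action after passing to a finite separable extension, yielding the claimed unramified extension across the boundary.
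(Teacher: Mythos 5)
The paper itself offers no proof for this theorem; it simply cites Tsuzuki (\cite[Theorem~4.2.6]{tsuzuki-finite} for curves, \cite[Theorem~1.3.1, Remark~7.3.1]{tsuzuki-finite2} in general). So there is no textual proof to compare against; the question is whether your sketch would hold up as an outline.

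The overall architecture is right, but several of the steps you treat as routine are where the real work lives, and at least one depends on an input you do not have. First, both your forward and reverse directions tacitly use \emph{smooth} compactifications: in the forward direction you ``use de Jong's alterations to assume $\overline{X}$ is smooth and $D$ is strict normal crossings,'' and in the reverse direction you ask for a smooth proper $\overline{X}'$ compactifying the Galois cover $X'$. But an alteration is not a finite \'etale cover, so replacing $\overline{X}$ by a de Jong alteration in the forward direction lets you conclude only that $V$ becomes unramified after an \emph{alteration}, not after a finite \'etale cover; you would still have to argue that the extra (not \'etale) part of the alteration can be dropped. Likewise, in the reverse direction a smooth compactification of $X'$ need not exist without resolution of singularities. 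One can work instead with a normal proper $\overline{X}'$ and invoke Theorem~\ref{T:purity} (or its unit-root form, Remark~\ref{R:unit root 1 plus 2}) to reduce to codimension one, but that needs to be said explicitly; as written the reverse direction has a gap.

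Second, the ``Abhyankar-type patching'' is the content of the higher-dimensional theorem, and your phrasing hides the key point. What Tsuzuki's local result gives is that the inertia image at the generic point of each boundary divisor is finite. Passing from ``finite inertia image at each of finitely many divisors'' to ``a single finite \'etale cover kills them all'' is a profinite-group argument (take open normal subgroups $H_i$ with $H_i\cap I_{D_i}\subset \ker\rho|_{I_{D_i}}$ and intersect), but it is only legitimate once you have pinned down \emph{which} compactification and \emph{which} inertia groups are in play, and have dealt with the fact that $\kappa(\eta_{D_i})$ is not perfect, so the reduction to $\FIsoc^\dagger(k((t)))$ with $k$ perfect requires an argument. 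Calling it ``Abhyankar-type'' suggests something much heavier (taming wild ramification by adjoining roots), which is not what is needed and would be misleading.

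Third, you derive full faithfulness from Theorem~\ref{T:fully faithful1}. This is valid within the paper's internal logic, but note Remark~\ref{R:fully faithful1}: the paper attributes the unit-root case of that full faithfulness precisely to Theorem~\ref{T:unit root2}, i.e.\ to Tsuzuki's theorem you are trying to prove. So you are using a strictly more general, later result (\cite{kedlaya-full}) to supply what Tsuzuki's proof establishes directly; this is a legitimate shortcut, but if the goal is to reconstruct ``Tsuzuki's proof'' you should not lean on it. Finally, your closing sketch of the local theorem (bounded Frobenius matrix, Hensel plus Krasner) is an accurate impressionistic summary of part of \cite{tsuzuki-finite}, but it omits the cohomological/Swan-conductor input that makes the argument actually close; as written it would not convince a referee that the local theorem is in hand.
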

\begin{proof}
In the case $\dim X = 1$, this is \cite[Theorem~4.2.6]{tsuzuki-finite}.
For the general case, see \cite[Theorem~1.3.1, Remark~7.3.1]{tsuzuki-finite2}.
\end{proof}

\begin{remark} \label{R:unit root1 local model}
The local model of Theorem~\ref{T:unit root1} is that the category of unit-root objects in $\FIsoc(k((t)))$ is equivalent to the category of continuous representations of the absolute Galois group $G_{k((t))}$ on finite-dimensional $\QQ_p$-vector spaces.
The local model of Theorem~\ref{T:unit root2} is that the unit-root objects in
$\FIsoc^\dagger(k((t)))$ constitute the full subcategory in $\FIsoc(k((t)))$ 
corresponding to the representations with finite image of inertia.
See \cite[Theorem~4.2.6]{tsuzuki-finite} for discussion of both statements.
\end{remark}

\begin{remark} \label{R:unit root 1 plus 2}
By arguing as in \cite{tsuzuki-finite2},
one may prove a common generalization of Theorem~\ref{T:unit root1} and Theorem~\ref{T:unit root2}: for $X \to Y$ an open immersion,
the unit-root objects in $\FIsoc(X,Y)$ form a full subcategory corresponding to the category of \'etale $\QQ_p$-local systems $V$ on $X$ with the following property: there exists some proper morphism $Y' \to Y$ such that $X' = X \times_Y Y'$ is finite \'etale over $X$
and the pullback of $V$ to $X'$ extends to an \'etale $\QQ_p$-local system on $Y'$.
\end{remark}

We now consider the variation of the slope polygon over $X$.
\begin{theorem}[Grothendieck, Katz, de Jong--Oort, Yang] \label{T:polygon variation}
For $\calE \in \FIsoc(X)$, the following statements hold.
\begin{enumerate}
\item[(a)]
The slope polygon of $\calE$ is an upper semicontinuous function of $X$; moreover, its right endpoint is locally constant.
\item[(b)]
The locus of points where the slope polygon does not coincide with its generic value (which by (a) is Zariski closed) is of pure codimension $1$ in $X$.
\item[(c)]
Let $U$ be an open neighborhood of a point $x \in X$.
Suppose that the closure $Z$ of $x$ in $U$ has codimension at least $2$ in $U$.
If the slope polygons of $\calE$ at all points of $U \setminus Z$ share a common vertex, 
then this vertex also occurs in the slope polygon of $\calE$ at $x$.
(Beware that this statement does not apply to points of the slope polygon other than vertices.)
\end{enumerate}
\end{theorem}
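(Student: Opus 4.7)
The assertions are Zariski-local on $X$, so I reduce to the case where $X$ is affine and admits a convergent realization $P$ with a Frobenius lift $\sigma$; after further shrinking, $\calE$ may be represented by a free module on which $\sigma$ acts by a matrix $M$. The $y$-coordinate of the right endpoint of the slope polygon at $x$ equals the slope of $\det \calE$ at $x$; by Remark~\ref{R:rank 1} this rank-one object is isoclinic of integer slope equal to the $p$-adic valuation of $\det M$, hence constant on connected components of $X$. Upper semicontinuity of the full slope polygon is Grothendieck's classical specialization theorem: after reducing (by a local choice of Frobenius-stable lattice) to the $F$-crystal case, one controls the slope polygon in the limit by the Hodge polygons of iterated Frobenius matrices $M^n$, whose minor valuations are manifestly lower semicontinuous in $x$. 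The caveat of Remark~\ref{R:cyclic vector} is not an obstacle, since only the limiting inequality is needed.

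\textbf{Part (b), the main obstacle.} Purity of the Newton stratification is the crux of the theorem, and the heart of the matter is the result of de Jong--Oort; I see no shortcut. The reduction I would make is to assume $X = \Spec A$ with $A$ a regular local ring of dimension $\geq 2$, the slope polygon constant on the punctured spectrum but jumping strictly at the closed point $x$, and seek a contradiction. Using Theorem~\ref{T:isogeny category} to represent $\calE$ isogenously by an $F$-crystal of finite $\calO_{X,\crys}$-modules reduces the question to the purity statement for $F$-crystals, which is precisely de Jong--Oort's theorem. Their core argument, via displays over Artinian thickenings of $x$ combined with Grothendieck's specialization, is the genuine input and the true obstacle. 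Yang's contribution is to run the argument uniformly enough that passage to the isogeny category -- where a single Frobenius-stable lattice need not exist globally over $A$ -- does no damage.

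\textbf{Part (c).} Let $(i,j)$ be the shared vertex across $U \setminus Z$. By Katz's slope filtration theorem applied in the vertex-constant setting on $U \setminus Z$ (i.e., at every point of $U \setminus Z$ the lattice point $(i,j)$ is a vertex of the fiberwise slope polygon), there is a sub-$F$-isocrystal $\calE' \subseteq \calE|_{U \setminus Z}$ of rank $i$ accounting at each point for the slopes of $\calE$ lying strictly to the left of the vertex; in particular, its slope polygon ends at $(i,j)$. Since $\calE'$ and the quotient $\calE|_{U \setminus Z}/\calE'$ are locally free on a smooth rigid-analytic space (Remark~\ref{R:not locally free}) away from a subset of codimension $\geq 2$, both extend uniquely as reflexive coherent sheaves across $Z$, and the uniqueness forces the extensions to be preserved by the Frobenius and the connection. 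Restricting to $x$ yields a sub-$F$-isocrystal of $\calE$ at $x$ of rank $i$ with slope sum exactly $j$, which combined with the upper semicontinuity of part~(a) pins $(i,j)$ on the slope polygon at $x$ as a vertex. The delicate step to verify is the reflexive extension of $\calE'$ across $Z$, but this is a formal consequence of smoothness of $P_K$ and codimension $\geq 2$ of the excised locus, so the serious obstacle across the whole theorem remains part~(b).
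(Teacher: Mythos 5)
Your proposal for part~(b) has a genuine gap, and it is precisely the gap the paper is at pains to acknowledge. You reduce (b) to the de Jong--Oort purity theorem for $F$-crystals ``isogenously,'' and then assert that ``Yang's contribution is to run the argument uniformly enough that passage to the isogeny category \dots does no damage.'' This misattributes Yang's result (which is the vertex-persistence statement~(c), and is again only for locally free crystals) and, more importantly, papers over the actual obstruction: an object of $\FIsoc(X)$ corresponds via Theorem~\ref{T:isogeny category} only to a crystal of finite modules, not a \emph{locally free} one (Remark~\ref{R:not locally free}), and none of de Jong--Oort, Vasiu, or Yang apply in that generality. The paper explicitly states that it is unclear how to reduce (b) or (c) to the locally free case, and instead deduces (b) from (c) via a totally different mechanism (Remark~\ref{R:purity of stratification}): fix a generic vertex, let $U$ be the open locus where it persists, apply Theorem~\ref{T:filtration} over $U$ to extract a subobject, and then use the elementary full faithfulness of $\FIsoc(X) \to \FIsoc(U)$ in codimension~$\geq 2$ (a Hartogs-type statement) to extend it over $X$, contradicting the jump.

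Your part~(c) is, in spirit, the right argument, and is close to what the paper actually does for both (b) and (c). The main difference is mechanical: you extend $\calE'$ across $Z$ by taking a reflexive hull of the coherent sheaf and invoking uniqueness, whereas the paper extends it via the (easier, elementary) full faithfulness of restriction functors. Your route would need some care on a rigid-analytic space: the reflexive extension exists and is unique, but one must still check that it is locally free and that the connection and Frobenius structure extend (the paper deliberately avoids these subtleties by working categorically via $H^0(Q_K,\calO) = H^0(P_K,\calO)$). Also note that what you call ``Katz's slope filtration theorem'' is Theorem~\ref{T:filtration} of the paper, which itself requires the same codimension-$\geq 2$ extension argument to go from curves/locally-free crystals to general $X$; so one must be careful not to use it circularly.

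Your part~(a) is essentially the classical Grothendieck/Katz specialization argument, but it silently assumes the existence of a locally free Frobenius-stable lattice (``a local choice of Frobenius-stable lattice''), which is exactly the point at issue. The paper instead uses a noetherian induction: get local constancy on a dense open $U$ with codimension-$\geq 2$ complement, restrict to curves through the remaining points to obtain the inequality, and iterate. Your observation about the right endpoint being controlled by $\det\calE$ and Remark~\ref{R:rank 1} is fine. Overall, the crucial missing idea is that (b) must be deduced from (c) rather than imported from the $F$-crystal literature.
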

\begin{proof}
Suppose first that $\calE$ arises from a crystal of finite \emph{locally free} $\calO_{X,\mathrm{crys}}$-modules via Theorem~\ref{T:isogeny category}. In this case,
we may deduce (a) from \cite[Theorem~2.3.1]{katz-slope}, (b) from
\cite[Theorem~4.1]{dejong-oort} or \cite[Main Theorem~1.6]{vasiu},
and (c) from \cite[Theorem~1.1]{yang}.

In light of Remark~\ref{R:not locally free}, this argument is not sufficient except when $\dim(X) = 1$. To proceed further, we may assume that $X$ is irreducible with generic point $\eta$.
To recover (a), we argue by noetherian induction.
By discarding a suitable closed subspace of codimension at least 2, we may deduce that there exists an open dense subscheme $U$ of $X$ on which the the slope polygon coincides with its value at $\eta$
(compare \cite[Lemma~2.5.1]{crew-f}). By restricting to curves in $X$, we may deduce that the slope polygon at every point lies on or above the value at $\eta$. Consequently, for each irreducible component $Z$ of $X \setminus U$,
the set of points $z \in Z$ at which the slope polygon of $\calE$ coincides with its value at $\eta$ is either empty or an open dense subscheme; in either case, its complement is closed in $Z$ and hence in $X$. 

Unfortunately, it is not clear how to use a similar approach to reduce (b) or (c) to the case of locally free crystals. We thus adopt a totally different approach; see Remark~\ref{R:purity of stratification}.
\end{proof}

\begin{remark} \label{R:specialization local model}
The reference given for Theorem~\ref{T:polygon variation}(a) also implies the local model statement: for
$\calE \in \FIsoc(k \llbracket t \rrbracket)$, the slope polygon of the pullback of $\calE$ to $\FIsoc(k)$ (the \emph{special slope polygon}) lies on or above the slope polygon of the pullback of $\calE$ to
$\FIsoc(k((t)))$ (the \emph{generic slope polygon}), with the same endpoint.
This statement can be generalized to $\calE \in \FIsoc^\dagger(k((t)))$ using slope filtrations
in $\FIsoc^\ddagger(k((t)))$; see Remark~\ref{R:slope filtration Robba}.
\end{remark}

In certain cases, the geometric structure on $X$ precludes the existence of nontrivial variation of slope polygons, as  in the following recent result of Tsuzuki (given a different proof by D'Addezio).
\begin{theorem}[Tsuzuki, D'Addezio]
For $k$ finite, $X$ an abelian variety over $k$, and $\calE \in \FIsoc(X)$, the slope polygon of $\calE$ is constant on $X$.
\end{theorem}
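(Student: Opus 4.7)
The plan is to argue by contradiction using Theorem~\ref{T:polygon variation} together with the group structure on $X$. Suppose the slope polygon of $\calE$ is not constant, and let $N_{\mathrm{gen}}$ be its generic value. By Theorem~\ref{T:polygon variation}(a,b), the jump locus $Z \subset X$ is a nonempty effective divisor; the goal is to rule out its existence. A preliminary reduction is to the generically isoclinic case: if the generic slope polygon has multiple slopes, Katz's generic slope filtration exists on a dense open $U \subset X$, and assuming the result for each (generically isoclinic) graded piece yields constancy of the slope polygon of $\calE|_U$. Extending from $U$ to $X$ is then governed by Theorem~\ref{T:polygon variation}(c), after further d\'evissage making the non-extension locus of codimension at least $2$.

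Next I would exploit the symmetries of $X$. For each $k$-rational point $a \in X(k)$, the translation $T_a: X \to X$ is an automorphism, so $T_a^*\calE \in \FIsoc(X)$ is again a convergent $F$-isocrystal whose jump locus is $T_a^{-1}(Z) = Z - a$. For each integer $n$ coprime to $p$, the isogeny $[n]: X \to X$ is finite \'etale of degree $n^{2g}$, and the pullback $[n]^*\calE$ has jump locus $[n]^{-1}(Z)$; as $n \to \infty$ these preimages concentrate around the Zariski-dense set of torsion points. Combined with the pushforward functoriality of Remark~\ref{R:etale pushforward} and the observation that one may base extend $k$ to force $X(k)$ to grow arbitrarily large, one obtains an abundant family of $F$-isocrystals on $X$ whose jump loci are group-theoretic modifications of $Z$.

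The main obstacle is to convert this abundance into an actual contradiction: a priori $T_a^*\calE$ differs from $\calE$, so one cannot directly conclude translation-invariance of $Z$. The decisive ingredient should be a Tannakian rigidity argument exploiting the fact that the geometric \'etale fundamental group $\pi_1^{\mathrm{et}}(X_{\overline k}, \bar x)$ is abelian. Via Theorem~\ref{T:unit root1}, twisting $\calE$ by a rank-$1$ unit-root $F$-isocrystal corresponding to a character parameterized by $X^\vee(\overline k)$ shifts all slopes uniformly and hence preserves the jump locus $Z$ exactly. This gives a continuous $X^\vee$-family of $F$-isocrystals on $X$ sharing the common jump divisor $Z$; together with the translates $\{Z - a\}_{a \in X(k)}$ and the isogeny preimages $\{[n]^{-1}(Z)\}$, one should be able to force the numerical class $[Z] \in \mathrm{NS}(X)$ to be invariant under translation by a positive-dimensional abelian subvariety of $X$, contradicting the fact that $Z$ has pure codimension $1$. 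The crux of the difficulty lies precisely in extracting this translation-invariance rigorously from the combinatorial abundance of related $F$-isocrystals.
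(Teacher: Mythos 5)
Your proposal identifies the right general structures — translations, isogenies, and the abelianness of the geometric fundamental group of $X$ — but does not assemble them into a contradiction, and you are frank that the crux is missing. The main gap is the one you flag yourself: $T_a^*\calE$, $[n]^*\calE$, and the character twists are all \emph{different} $F$-isocrystals, so knowing that $T_a^*\calE$ has jump locus $Z-a$ places no constraint whatsoever on $Z$ absent a comparison between $T_a^*\calE$ and $\calE$. (If one had $\calE \cong T_a^*\calE$ for a Zariski-dense set of $a$ then $Z = Z - a$ would force $Z$ empty, but generic isocrystals on an abelian variety are not translation-invariant, and the proposal contains no reduction to that case.) Two of the proposed devices carry no information for this purpose: tensoring with a rank-$1$ unit-root object shifts each slope by $0$, so the Newton polygon is pointwise unchanged and the $X^\vee$-parameterized family has jump locus $Z$ tautologically rather than as a constraint; and translation invariance of the class $[Z] \in \mathrm{NS}(X)$ is much weaker than translation invariance of $Z$ itself and carries no contradiction (abelian varieties have plenty of effective divisors with translation-invariant numerical class). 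Finally, the d\'evissage to the generically isoclinic case runs into exactly the codimension mismatch you gloss over: Theorem~\ref{T:polygon variation}(b) says the jump locus has pure codimension~$1$, whereas Theorem~\ref{T:polygon variation}(c) only applies when the bad locus has codimension $\geq 2$, so ``further d\'evissage making the non-extension locus of codimension at least~$2$'' is precisely the missing step, not a consequence of what you have.

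For comparison, the cited proofs use genuinely new inputs beyond these observations. D'Addezio's argument is Tannakian: it relates the slope filtration to a parabolic subgroup of the monodromy group of $\calE$ and uses the abelianness of $\pi_1^{\mathrm{et}}(X_{\overline{k}})$ together with the parabolicity theorem to show this parabolic must be the full monodromy group, whence the slope filtration exists globally on $X$ and the polygon is constant. Tsuzuki works more directly with the slope filtration and the torsion/isogeny structure of $X$. In both cases the passage from ``many related isocrystals with translated jump loci'' to ``the jump locus is empty'' requires an idea the proposal does not supply.
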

\begin{proof}
See \cite[Theorem~1.4]{tsuzuki-np} or \cite[Theorem~1.1]{daddezio-slopes}.
\end{proof}

\section{Slope filtrations}

We continue the discussion of slopes by considering filtrations by slopes. Such filtrations are loosely analogous to the filtration occurring in the definition of a variation of Hodge structures.

\begin{theorem}[after Katz] \label{T:filtration}
Suppose $\calE \in \FIsoc(X)$ has the property that the point $(m,n) \in \ZZ^2$ is a vertex of the slope polygon at every point of $\calE$.
Then there exists a short exact sequence
\[
0 \to \calE_1 \to \calE \to \calE_2 \to 0
\]
in $\FIsoc(X)$ with $\rank \calE_1 = m$ such that for each $x \in X$, the slope polygon of $\calE_1$ is the portion of the slope polygon of $\calE$ from $(0,0)$ to $(m,n)$.
\end{theorem}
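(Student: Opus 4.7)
The plan is to bootstrap from Katz's original slope filtration theorem for $F$-crystals of \emph{locally free} modules to the general isocrystal statement, using the isogeny category interpretation of Theorem~\ref{T:isogeny category} to pass from crystals to isocrystals and invoking purity to bridge the locus where $\calE$ is not represented by a locally free crystal.

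First I pin down the subobject uniquely. At each point $x \in X$, if a subobject $\calE_1 \subset \calE$ of rank $m$ realizes the portion of the slope polygon from $(0,0)$ to $(m,n)$, then after pulling back to $\FIsoc(\kappa(x)^{\perf})$ and applying Theorem~\ref{T:DM}, $\calE_1|_x$ must coincide with the sum of the isotypic Dieudonn\'e--Manin components of slopes $\leq n/m$: by \eqref{eq:pure hom} any morphism from an isoclinic object of slope $> n/m$ into an object with all slopes $\leq n/m$ vanishes, so this sum is the unique candidate. Uniqueness at each geometric point implies uniqueness of $\calE_1 \in \FIsoc(X)$ globally, whence the construction is local on $X$ in the Zariski topology and the local pieces automatically glue.

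To produce $\calE_1$ generically, by Remark~\ref{R:not locally free} I choose an open $U \subset X$ with $\codim(X \setminus U, X) \geq 2$ such that $\calE|_U$ arises via Theorem~\ref{T:isogeny category} from an honest crystal $\mathcal{M}$ of finite locally free $\calO_{U,\crys}$-modules. For such an $F$-crystal, Katz's slope filtration theorem \cite[\S 2.6]{katz-slope} asserts precisely that a vertex $(m,n)$ of the Newton polygon which occurs at every point of $U$ yields a short exact sequence of $F$-crystals $0 \to \mathcal{M}_1 \to \mathcal{M} \to \mathcal{M}_2 \to 0$ with $\mathcal{M}_1$ of rank $m$ and with the desired pointwise slope behavior. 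Passing to the isogeny category supplies the required short exact sequence $0 \to \calE_1' \to \calE|_U \to \calE_2' \to 0$ in $\FIsoc(U)$.

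It remains to extend this filtration across $X \setminus U$. Because the complement of $U$ has codimension $\geq 2$, purity for isocrystals (Theorem~\ref{T:purity}) lets me extend the sub-object $\calE_1'$ to some $\calE_1 \in \FIsoc(X)$, and the inclusion $\calE_1' \hookrightarrow \calE|_U$ (a morphism in $\FIsoc(U)$, hence automatically $\sigma$-equivariant) extends by the same purity principle to a morphism $\calE_1 \to \calE$ in $\FIsoc(X)$, which is injective because it is injective on $U$. Define $\calE_2 := \calE/\calE_1$; upper semicontinuity (Theorem~\ref{T:polygon variation}(a)) applied to both $\calE_1$ and $\calE_2$, combined with additivity of slope multisets in the short exact sequence and the hypothesis that $(m,n)$ is a vertex of the slope polygon of $\calE$ everywhere, forces the slope polygon of $\calE_1$ at every point to be the portion of that of $\calE$ from $(0,0)$ to $(m,n)$. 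The main obstacle is of course the content of Katz's theorem itself; among the steps above, the most delicate is ensuring that purity applies to the inclusion morphism, not merely to the object $\calE_1'$, so that the extension lives inside $\calE$ rather than merely abstractly over $X$.
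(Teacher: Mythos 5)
Your overall strategy is the same as the paper's: apply Katz over an open dense $U \subseteq X$ with $\codim(X-U,X)\geq 2$ on which $\calE$ is represented by a locally free crystal (Remark~\ref{R:not locally free}), then extend the resulting subobject across $X-U$ by purity and conclude by semicontinuity and additivity of slopes. There is, however, one step you gloss over which the paper explicitly flags as needing an extra argument (Remark~\ref{R:filtration no connection}): \cite[Corollary~2.6.2]{katz-slope} concerns finite projective modules equipped only with a Frobenius action, not crystals of $\calO_{U,\crys}$-modules. So what Katz gives you over $U$ is a Frobenius-stable sub of the underlying $\sigma$-module; to obtain a short exact sequence in $\FIsoc(U)$ you must still verify that this sub is stable under the connection. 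The paper does this via the Kodaira--Spencer construction: the failure of horizontality is a morphism $\calE_1 \to \calE_2$ of $\sigma$-modules, which vanishes because $\calE_1$ and $\calE_2$ have disjoint slopes, by \eqref{eq:pure hom}. Without this (or an equivalent uniqueness/functoriality argument) your generic construction is incomplete as stated.

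Two smaller points. Where you invoke Theorem~\ref{T:purity} you use its hardest part (essential surjectivity of restriction) to extend the object $\calE_1'$; the paper notes in Remark~\ref{R:purity of stratification} that the elementary full-faithfulness (Hartogs-type) statement suffices, by replacing $\calE$ with $\wedge^m\calE$ to reduce to $m=1$, observing that the rank-one sub is a twist of a unit-root object (Remark~\ref{R:rank 1}), and extending the corresponding \'etale $\QQ_p$-local system by classical Zariski--Nagata purity via Theorem~\ref{T:unit root1}. Your route is legitimate, just heavier than necessary. Also, in your uniqueness discussion the cutoff ``slopes $\leq n/m$'' is wrong ($n/m$ is only the average of the $m$ smallest slopes); the relevant isotypic components are those of slope at most the $m$-th smallest, i.e., strictly below the $(m+1)$-st. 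Since the theorem asserts only existence this slip is harmless, and your concluding argument at points of $X-U$ (local constancy of the right endpoint from Theorem~\ref{T:polygon variation}(a) gives that the slopes of $\calE_1$ sum to $n$, and the vertex hypothesis then forces them to be the $m$ smallest) is fine.
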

\begin{proof}
In the case where $X$ is a curve, we may apply \cite[Corollary~2.6.2]{katz-slope}.
For general $X$, in light of Remark~\ref{R:not locally free} we may execute the same argument to obtain the desired exact sequence over some open dense subspace $U$ of $X$ with $\codim(X-U, X) \geq 2$. We may then conclude using
Zariski--Nagata purity (see Theorem~\ref{T:purity} and Remark~\ref{R:purity of stratification} below).
\end{proof}
\begin{cor}[after Katz] \label{C:filtration}
Suppose $\calE \in \FIsoc(X)$ has the property that the slope polygon of $\calE$ is constant on $X$.
Then $\calE$ admits a unique filtration
\[
0 = \calE_0 \subset \cdots \subset \calE_l = \calE
\]
such that each successive quotient $\calE_i/\calE_{i-1}$ is everywhere isoclinic of some slope $s_i$,
and $s_1 < \cdots < s_l$. We call this the \emph{slope filtration} of $\calE$.
\end{cor}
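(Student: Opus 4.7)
The plan is to deduce this by iterating Theorem~\ref{T:filtration} at each vertex of the (constant) slope polygon, and to establish uniqueness via the vanishing of Hom between isoclinic objects of distinct slopes.

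Let $(0,0) = (m_0, n_0), (m_1, n_1), \ldots, (m_l, n_l)$ denote the vertices of the common slope polygon, and let $s_1 < \cdots < s_l$ be the slopes of the successive segments. For existence, I first apply Theorem~\ref{T:filtration} at the vertex $(m_1, n_1)$ to obtain a short exact sequence $0 \to \calE_1 \to \calE \to \calE' \to 0$ with $\rank \calE_1 = m_1$ and with the slope polygon of $\calE_1$ at each point equal to the single segment from $(0,0)$ to $(m_1, n_1)$; in particular, $\calE_1$ is everywhere isoclinic of slope $s_1$. Since slope multisets are additive in short exact sequences, the slope polygon of $\calE'$ is again constant on $X$, with vertices $(0,0), (m_2 - m_1, n_2 - n_1), \ldots, (m_l - m_1, n_l - n_1)$ and edge slopes $s_2 < \cdots < s_l$. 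I then iterate the construction on $\calE'$; after $l$ steps, the preimages in $\calE$ of the resulting filtrations assemble into the required filtration $0 = \calE_0 \subset \calE_1 \subset \cdots \subset \calE_l = \calE$.

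For uniqueness, let $0 = \calE'_0 \subset \cdots \subset \calE'_l = \calE$ be another such filtration. Consider the composition $\calE'_1 \hookrightarrow \calE \twoheadrightarrow \calE/\calE_1$. The source is isoclinic of slope $s_1$, while the target admits a filtration (induced from the one just built) whose successive quotients are isoclinic of slopes $s_2, \ldots, s_l$, all strictly greater than $s_1$. Since there are no nonzero morphisms between isoclinic objects of distinct slopes (noted just after \eqref{eq:pure hom}), a d\'evissage along this filtration of $\calE/\calE_1$ forces the composition to vanish. Hence $\calE'_1 \subseteq \calE_1$, and a rank comparison gives $\calE'_1 = \calE_1$. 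Iterating on the quotient $\calE/\calE_1$, equipped with the induced filtrations, completes the uniqueness argument.

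The only real subtlety is checking that the slope polygon of the quotient $\calE'$ at each inductive step is again constant on $X$, but this is immediate from additivity of slope multisets in short exact sequences together with the fact that each layer constructed is everywhere isoclinic of a single slope. With Theorem~\ref{T:filtration} in hand, the remaining work is a mechanical iteration.
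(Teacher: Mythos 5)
Your proof is correct and follows the route the paper intends: the Corollary is stated without its own proof precisely because it is obtained by iterating Theorem~\ref{T:filtration} at successive vertices of the common slope polygon, which is what you do; the uniqueness argument via vanishing of Hom between isoclinic objects of distinct slopes (the remark after \eqref{eq:pure hom}) is the standard d\'evissage and is exactly the right thing. Note that the filtration length $l$ and the ranks of the steps are forced by additivity of slope multisets, so comparing two filtrations step by step is legitimate, as you implicitly use; your observation that the quotient at each stage again has constant slope polygon (again by additivity) is the one small point needing a sentence, and you supply it.
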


\begin{remark}
In Theorem~\ref{T:filtration}, it is not enough to assume that $(m,n)$ lies on the slope polygon at every point of $\calE$, even if one also assumes that $(m,n)$ is a vertex at each generic point of $X$.
\end{remark}

\begin{remark} \label{R:filtration local model}
One local model of Corollary~\ref{C:filtration} is that every object of $\FIsoc(k((t)))$ has a slope filtration \cite[Proposition~5.10]{kedlaya-locmono}.
A more substantial version is that for $\calE \in \FIsoc(k \llbracket t \rrbracket)$,
if the generic and special slope polygons coincide,
then $\calE$ admits a slope filtration \cite[Corollary~2.6.3]{katz-slope}.
A similar statement holds for $\calE \in \FIsoc^\dagger(k((t)))$; see Remark~\ref{R:slope filtration Robba}.
\end{remark}

\begin{remark} \label{R:filtration no connection}
The arguments in \cite{katz-slope} involve a finite projective module equipped only with a Frobenius action (and not an integrable connection).
On one hand, this means that
Theorem~\ref{T:filtration} remains valid in this setting, as does its local model
(Remark~\ref{R:filtration local model}). On the other hand,
to obtain Theorem~\ref{T:filtration} (or Remark~\ref{R:filtration local model}) as stated,
one must make an extra argument to verify that the filtration is respected also by the connection. To wit, the Kodaira--Spencer construction defines a morphism
$\calE_1 \to \calE_2$ of $\sigma$-modules which vanishes if and only if $\calE_1$ is 
stable under the connection; however, this vanishing is provided by \eqref{eq:pure hom}.
\end{remark}

There is no analogue of Theorem~\ref{T:filtration} for overconvergent $F$-isocrystals. 
Here is an explicit example.
\begin{example} \label{E:elliptic unit-root}
Let $X$ be the modular curve $X(N)$ for some $N \geq 3$ not divisible by $p$
(taking $N \geq 3$ forces this to be a scheme rather than a Deligne--Mumford stack).
Then the first crystalline cohomology of the universal elliptic curve over $X$ gives rise to an object $\calE$ of
$\FIsoc^\dagger(X)$ of rank 2. The slope polygon of $\calE$ generically has slopes $0,1$,
but there is a finite set $Z \subset X$ (the \emph{supersingular locus}) 
at which the slope polygon jumps to $1/2, 1/2$. Let $U$ be the complement of $Z$ in $X$ (the \emph{ordinary locus}); by Theorem~\ref{T:filtration}, the restriction of $\calE$ to
$\FIsoc(U)$ admits a rank 1 subobject which is unit-root.
However, no such subobject exists in $\FIsoc^\dagger(U)$; see Remark~\ref{R:subobjects}.

By completing at a supersingular point, we also obtain
an irreducible object of $\FIsoc(k\llbracket t \rrbracket)$ which remains irreducible in $\FIsoc^\dagger(k((t)))$ but not in $\FIsoc(k((t)))$.
\end{example}

\begin{remark} \label{R:reverse filtration}
Notwithstanding Example~\ref{E:elliptic unit-root}, one can formulate something like a filtration theorem for overconvergent $F$-isocrystals, at the expense of working in a ``perfect'' setting where the Frobenius lift is a bijection; since one cannot differentiate in such a setting, one only gets statements about individual liftings.

For simplicity, we discuss only the local model situation here.
Put $\Gamma^{\perf} = W(k((t))^{\perf})$; there is a natural Frobenius-equivariant embedding $\Gamma \to \Gamma^{\perf}$ taking $t$ to the Teichmuller lift $[t]$
(that is, the Frobenius lift $\sigma$ on $\Gamma$ corresponds to the unique Frobenius lift $\varphi$ on $\Gamma^{\perf}$).
Each element of $\Gamma^{\perf}$ can be written uniquely as a $p$-adically convergent series
$\sum_{n=0}^\infty p^n [\overline{x}_n]$ for some $\overline{x}_n \in k((t))^{\perf}$; let $\Gamma^{\perf}_c$ be the subset of $\Gamma^{\perf}$ consisting of those series for which the $t$-adic valuations of $\overline{x}_n$ are bounded below by some linear function of $n$ (for $n>0$).
One verifies easily that $\Gamma^{\perf}_c$ is a $\varphi$-stable subring of $\Gamma^{\perf}$ containing the image of $\Gamma_c$.

Suppose now that $\calE$ is a finite projective module over $\Gamma^{\perf}_c[p^{-1}]$ equipped with an isomorphism $\varphi^* \calE \cong \calE$. Using an argument of de Jong
\cite[Proposition~5.5]{dejong-barsotti}, one can show \cite[Proposition~5.11]{kedlaya-locmono}
that $\calE$ admits a unique filtration
implies that $\calE$ admits a unique filtration
\[
0 = \calE_0 \subset \cdots \subset \calE_l = \calE
\]
by $\varphi$-stable submodules
such that each successive quotient $\calE_i/\calE_{i-1}$ is everywhere isoclinic of some slope $s_i$,
and $s_1 > \cdots > s_l$. We call this the \emph{reverse slope filtration} of $\calE$.
\end{remark}

We add some additional remarks concerning the local situation.
\begin{remark} \label{R:slope filtration Robba plus}
For $\calE \in \FIsoc^\ddagger(k \llbracket t \rrbracket)$, an argument of Dwork
\cite[Lemma~6.3]{dejong-barsotti}
implies that $\calE$ admits a unique filtration specializing to the slope filtration in $\FIsoc(k)$, and that each subquotient descends uniquely to an isoclinic object in
$\FIsoc(k \llbracket t \rrbracket)$. In particular, the image of $\calE$ in $\FIsoc^\ddagger(k((t)))$ admits a filtration that in a certain sense reflects the special slope polygon of $\calE$. This sense is made more precise in Remark~\ref{R:slope filtration Robba} below.
\end{remark}

\begin{remark} \label{R:isoclinic Robba}
The functor from $\FIsoc^\dagger(k((t)))$ to $\FIsoc^\ddagger(k((t)))$ is not fully faithful in general, but it is fully faithful on the category of isoclinic objects of any fixed slope \cite[Theorem~6.3.3(b)]{kedlaya-revisited}. We declare an object of $\FIsoc^\ddagger(k((t)))$ to be \emph{isoclinic} of a particular slope if it arises from an isoclinic object of $\FIsoc^\dagger(k((t)))$ of that slope. 

Beware that the analogue of \eqref{eq:pure hom} in this context only holds when $r/s \leq r'/s'$. More precisely, if $\calE_1, \calE_2 \in \FIsoc^\ddagger(k((t)))$ are isoclinic of 
slopes $s_1, s_2$, then $\Hom_{\FIsoc^\ddagger(k((t)))}(\calE_1, \calE_2)$
vanishes when $s_1 < s_2$ (by \cite[Proposition~3.3.4]{kedlaya-revisited}),
equals the corresponding Hom-set in $\FIsoc^\dagger(k((t)))$ if $s_1 = s_2$ (by the full faithfulness statement quoted above), and is hard to control if $s_1 > s_2$.
\end{remark}

\begin{remark} \label{R:slope filtration Robba}
In light of Remark~\ref{R:slope filtration Robba}, one may ask whether
an arbitrary object $\calE \in \FIsoc^\ddagger(k((t)))$ admits a slope filtration in the sense of Corollary~\ref{C:filtration}. Such a filtration, were it to exist, would be unique by virtue of Remark~\ref{R:isoclinic Robba}; namely, under the geometric sign convention (Remark~\ref{R:sign convention}), it would coincide with the \emph{Harder--Narasimhan filtration} by destabilizing subobjects.
However, constructing such a filtration is made difficult by the fact that in this setting, it cannot be studied using cyclic vectors (as in Remark~\ref{R:cyclic vector}).
Nonetheless, with some effort one can prove existence of such a filtration \cite[Theorem~6.10]{kedlaya-locmono} (again using the Kodaira--Spencer argument
to pass from a filtration of $\sigma$-modules to a filtration of isocrystals)
and then use it to \emph{define} the slope polygon of $\calE$.
(For alternate expositions of the construction, see \cite[Theorem~6.4.1]{kedlaya-revisited}, \cite[Theorem~1.7.1]{kedlaya-rel}.)

For $\calE \in \FIsoc^\dagger(k((t)))$, one can now associate two slope polygons to $\calE$:
one arising from the image in $\FIsoc(k((t)))$, called the \emph{generic slope polygon};
and one arising from the image in $\FIsoc^\ddagger(k((t)))$, called the \emph{special slope polygon}. In case $\calE$ arises from $\FIsoc(k \llbracket t \rrbracket)$, these definitions agree with the ones from Remark~\ref{R:specialization local model}. 
One can make an \emph{extended Robba ring} containing both $\Gamma^{\perf}_c$ and
$\calR$ and use it to compare the slope filtration described above with the reverse
slope filtration (Remark~\ref{R:reverse filtration}), so as to obtain analogues of
Remark~\ref{R:specialization local model} and Remark~\ref{R:filtration local model}:
the special slope polygon again lies on or above the generic slope polygon, with the same right endpoint \cite[Proposition~5.5.1]{kedlaya-revisited},
and equality implies the existence of a slope filtration of $\calE$ itself \cite[Theorem~5.5.2]{kedlaya-revisited}.
\end{remark}

\begin{remark} \label{R:Crew1}
By combining Remark~\ref{R:unit root1 local model} with Remark~\ref{R:slope filtration Robba}, one sees that every object $\calE \in \FIsoc^\ddagger(k((t)))$
admits a filtration with the property that for some finite
\'etale morphism $\Spec k'((u)) \to \Spec k((t))$, the pullback to 
$\FIsoc^\ddagger(k'((u)))$ of each subquotient of the filtration is itself an object arising by pullback from $\FIsoc(k')$. (Technical note: forming the pullback involves changing Frobenius lifts, which is achieved using the Taylor isomorphism provided by the connection.) This is a statement formulated\footnote{Crew's exact wording was: ``It seems reasonable that \emph{any} overconvergent $F$-isocrystal on a smooth curve is quasi-unipotent.'' This was later interpreted as the first formal statement of Crew's conjecture.} by Crew \cite[\S 10.1]{crew-finite}, commonly known thereafter as \emph{Crew's conjecture}; the approach to Crew's conjecture we have just described is the one given in \cite{kedlaya-locmono}. Independent contemporaneous proofs were given by 
Andr\'e \cite{andre-mono} and Mebkhout \cite{mebkhout-mono} based on the theory of $p$-adic differential equations; see \cite[Theorem~20.1.4]{kedlaya-book} for a similar argument.
\end{remark}

\begin{remark} \label{R:local mono rep}
Let $X$ be a curve, let $x \in X$ be a closed point of residue field $k$, let $U$ be the complement of $x$ in $X$, and identify the completed local ring of $X$ at $x$ with $k \llbracket t \rrbracket$. For $\calE \in \FIsoc(U,X)$,
by applying Remark~\ref{R:Crew1} to the pullback of $\calE$ to $\FIsoc^\ddagger(k((t)))$,
we obtain a representation of $G_{k((t))}$ with finite image of inertia.
This is called the \emph{local monodromy representation} of $\calE$ at $x$, because it plays a similar role to that played in $\ell$-adic cohomology to the pullback of a local system from $X$ to $\Spec k((t))$; see Remark~\ref{R:Crew conjecture} for more details. For this reason, Crew's conjecture is also called the \emph{$p$-adic local monodromy theorem}; however, in the $p$-adic setting there is no natural definition of a \emph{global} monodromy representation which specializes to the local ones.
(See \cite{kedlaya-overview} for a careful construction of local monodromy representations.)
\end{remark}

\section{Restriction functors}
\label{sec:restriction}

Throughout \S\ref{sec:restriction}, let $X \to Y$ be an open immersion of $k$-varieties
(with no smoothness condition on $Y$),
let $U$ be an open dense subscheme of $X$,
and let $W$ be an open subscheme of $Y$ containing $U$.
We exhibit some properties of the restriction functor $\FIsoc(X,Y) \to \FIsoc(U,W)$; in the case of unit-root isocrystals, most of these statements can be predicted from 
Theorem~\ref{T:unit root1} and Theorem~\ref{T:unit root2}, but the proofs require additional ideas. In a few cases, the predictions turn out to be misleading.

We begin with an analogue of Zariski--Nagata purity (which has no local model).
In the unit-root case, this may be deduced from Remark~\ref{R:unit root 1 plus 2}.
\begin{theorem}[Kedlaya, Shiho] \label{T:purity}
Suppose that $\codim(X-U, X) \geq 2$.
\begin{enumerate}
\item[(a)]
The functor  $\FIsoc(X, Y) \to \FIsoc(U, Y)$
is an equivalence of categories. 
\item[(b)]
Suppose further that $Y$ is smooth, $Y \setminus X$ is a normal crossings divisor, and
$\codim(Y-W, Y) \geq 2$. Then the functor 
$\FIsoc(X, Y) \to \FIsoc(U, W)$
is an equivalence of categories.
\item[(c)]
The functors 
\[
\FIsoc^\dagger(X) \to \FIsoc^\dagger(U), \qquad \FIsoc(X) \to \FIsoc(U)
\]
are equivalences of categories.
\end{enumerate}
\end{theorem}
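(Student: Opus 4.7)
My plan is to reduce every part to a single extension principle: \emph{a locally free coherent sheaf on a smooth rigid analytic space extends uniquely across a closed analytic subset of codimension $\geq 2$, and any compatible integrable connection and Frobenius action extend along with it}. The first half is a rigid-analytic form of Hartogs' theorem, proved affinoid-locally via extension for reflexive modules over a regular noetherian ring (cf.\ Remark~\ref{R:not locally free}); the second half is automatic because the connection and Frobenius are determined on a dense open.

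Since $\FIsoc$ is a stack for the Zariski topology, I may assume $Y$ is affine and work with a formal lift $P$ of $Y$ that is smooth in a neighborhood of $X$, as in Definition~\ref{D:overconvergent realization}. The key geometric input is that the complement $]X[_P \setminus ]U[_P$ of the corresponding tubes has codimension $\geq 2$ in $]X[_P$, by the local product structure of tubes over the smooth locus; the same holds inside any strict neighborhood of $]X[_P$. For part (a), an object of $\FIsoc(U,Y)$ is a vector bundle with connection and Frobenius on a strict neighborhood of $]U[_P$, so the extension principle yields a unique extension to a strict neighborhood of $]X[_P$ and hence essential surjectivity. Fullness reduces to the same extension statement applied to the internal $\mathcal{H}om$-bundle, and faithfulness is immediate. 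For part (c), the overconvergent statement follows from (a) by choosing via Nagata compactification a proper $\overline{Y} \supseteq X$ and using $\FIsoc^\dagger(X) = \FIsoc(X,\overline{Y})$; the convergent statement follows by applying the extension principle directly inside the Raynaud generic fiber of a smooth formal lift of $X$.

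For part (b), one must additionally pass between the two different compactifications $Y$ and $W$. The smoothness of $Y$ and the NCD hypothesis on $Y\setminus X$ supply toroidal coordinates near the boundary in which a cofinal system of strict neighborhoods of $]X[_P$ consists of polyannuli over $X$; the hypothesis $\codim(Y-W,Y) \geq 2$ then says that replacing the formal model of $Y$ by one of $W$ alters only a codimension-$\geq 2$ locus transverse to the polyannulus factor, so the extension principle applies once more. Combined with (a), this yields the claimed equivalence.

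I expect (b) to be the main obstacle. It is the only part in which purity is carried out simultaneously in two directions, namely the interior codimension-$\geq 2$ locus inside $X$ and the boundary codimension-$\geq 2$ locus along $Y-W$, while preserving overconvergence along the NCD. A naive Hartogs argument handles only the interior extension; the boundary extension genuinely requires the normal-crossings hypothesis to produce toroidal local models in which strict neighborhoods for the two compactifications can be matched up, and without either the smoothness of $Y$ or the NCD hypothesis the statement of (b) is known to fail.
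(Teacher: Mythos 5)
Your plan hinges on the claim that $]X[_P \setminus ]U[_P = ]Z[_P$ (with $Z = X - U$) has codimension $\geq 2$ in $]X[_P$, so that a rigid-analytic Riemann/Hartogs extension theorem for locally free sheaves applies. This is false, and the error undermines the whole strategy. The tube $]Z[_P$ is the union of residue disks over the points of $Z$: even when $Z$ is a single closed point of the surface $X = \AAA^2_k$, its tube in the lift $\widehat{\AAA}^2_{W(k)}$ is the full open bidisk $\{|x|<1,|y|<1\}$, which is $2$-dimensional. In general $]Z[_P$ has the same dimension as $]X[_P$, so there is no rigid-analytic codimension drop, and the extension you need is not of the Riemann-extension type. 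The ``local product structure of tubes'' you invoke produces $]Z[_P$ as (open polydisk)\,$\times$\,(closed polydisk), not as something small.

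The correct Hartogs-type phenomenon here, which the paper gestures at in Remark~\ref{R:purity of stratification}, is of a different nature: it is governed by the codimension of $Z$ in the \emph{special fiber}, not in the rigid space. Concretely, for a closed bidisk minus an open sub-bidisk (the region $\max(|x|,|y|)=1$), bounded analytic functions do extend, but the mechanism is that the two Laurent expansions along $\{|x|=1\}$ and $\{|y|=1\}$ only agree on the torus if all negative-index coefficients vanish; this is a statement about the boundary of the polydisk, not about a codimension-$\geq 2$ analytic subset. Moreover, even granting this for global sections (which yields full faithfulness, as the paper notes is elementary), essential surjectivity is genuinely harder: the object is only given on some strict neighborhood of $]U[_P$, whose ring of functions is not the Tate algebra of $P$, and you have not explained why the module extends over the full tube of $Z$. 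That step is exactly what the cited results of Kedlaya \cite[Proposition~5.3.3]{kedlaya-semi1} and Shiho \cite[Theorem~3.1]{shiho-purity} are for, and neither is a corollary of a reflexive-module extension lemma in the way your outline suggests. Your remarks on part (b) (toroidal coordinates, polyannuli) identify a plausible shape for the argument but stop short of a proof, and again rest on the same flawed codimension premise. The paper itself gives no proof but delegates (a) and (b) to these references, and derives (c) from them exactly as you do; so the structural reduction in your last paragraph of part (c) is fine, but the purported proofs of (a) and (b) are not.
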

\begin{proof}
For (a), see \cite[Proposition~5.3.3]{kedlaya-semi1}.
For (b), see \cite[Theorem~3.1]{shiho-purity}.
For (c), apply (a) with $Y=X$ and (b) with $Y = X$, $W = U$.
\end{proof}

\begin{remark} \label{R:purity of stratification}
It should be pointed out that in Theorem~\ref{T:purity}, full faithfulness of the restriction morphism is quite elementary. For example, in the case $Y = X$ of part (a), full faithfulness reduces to the fact that 
with notation as in  Definition~\ref{D:convergent realization},
for $Q$ the open formal subscheme of $P$ supported on $U$, we have $H^0(Q_K, \calO) = H^0(P_K, \calO)$. This fact, whose proof we leave to the reader, might be thought of as a nonarchimedean analogue of the Hartogs theorem from the theory of several complex variables.

This weaker statement suffices for some important applications. For example,
to deduce Theorem~\ref{T:filtration} from the results of \cite{katz-slope},
as noted above one must extend the conclusion from $U$ to $X$ for some open dense subspace $U$ with $\codim(X-U, X) \geq 2$.
For this, by replacing $\calE$ with $\wedge^m \calE$ we may reduce to the case $m=1$;
in this case, $\calE_1$ is automatically a twist of a unit-root object (see Remark~\ref{R:rank 1}) and so corresponds to an \'etale $\QQ_p$-local system on $U$ by Theorem~\ref{T:unit root1}. The latter extends to $X$ by Zariski--Nagata purity  in the usual sense
\cite[Tag~0BMB]{stacks-project}; by Theorem~\ref{T:unit root1} again, this means that $\calE_1$ itself extends canonically to an object of $\FIsoc(X)$. 
We may thus apply full faithfulness in Theorem~\ref{T:purity} to conclude.

Similar considerations apply to part (c) (and therefore part (b)) of Theorem~\ref{T:polygon variation}(c), to give a proof which is completely independent of \cite{dejong-oort} and \cite{yang}. Namely, we may assume that $X$ is irreducible with generic point $\eta$.
Fix a vertex of the slope polygon of $\calE$ at $\eta$, and let $U$ be the subset of $X$
on which this vertex persists.
By Theorem~\ref{T:polygon variation}(a), $U$ is open; by Corollary~\ref{C:filtration},
the restriction of $\calE$ to $U$ admits a slope filtration.
If $\codim(X-U, X) \geq 2$, then by full faithfulness in Theorem~\ref{T:purity} this filtration extends over $X$;
this proves the claim.
\end{remark}

We continue with a general statement about restriction functors, which combines work of several authors; in addition to the results cited in the proof, see Remark~\ref{R:fully faithful1} and Remark~\ref{R:fully faithful local} for relevant attributions.
\begin{theorem}[de Jong, Kedlaya, Shiho] \label{T:fully faithful1}
The restriction functor
\[
\FIsoc(X,Y) \to \FIsoc(U, W)
\]
is fully faithful. In particular, the functors
\begin{gather*}
\FIsoc(X,Y) \to \FIsoc(X), \quad \FIsoc^\dagger(X) \to \FIsoc(X),  \\
\FIsoc(X, Y) \to \FIsoc(U, Y), \quad 
\FIsoc(X) \to \FIsoc(U), \quad
\FIsoc^\dagger(X) \to \FIsoc^\dagger(U)
\end{gather*}
are fully faithful.
\end{theorem}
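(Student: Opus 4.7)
The plan is to use the standard internal-hom reduction to translate full faithfulness into the extension of horizontal, Frobenius-invariant sections, factor the restriction into two steps, and invoke the appropriate results from the literature.

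Each $\FIsoc(X,Y)$ is a rigid abelian tensor category with internal homs: on the level of locally free coherent sheaves on strict neighborhoods (Remark~\ref{R:not locally free}), duality and tensor product preserve both integrable connections and Frobenius structures, and restriction commutes with these operations. Hence $\Hom_{\FIsoc(X,Y)}(\calE_1, \calE_2)$ identifies with the space of horizontal, $\sigma$-invariant global sections of $\mathcal{H}om(\calE_1, \calE_2)$ on some strict neighborhood, and it suffices to show: for every $\calE \in \FIsoc(X,Y)$, the map from horizontal, $\sigma$-invariant sections of $\calE$ over a strict neighborhood of the $X$-tube in a lift of $Y$, to horizontal, $\sigma$-invariant sections over a strict neighborhood of the $U$-tube in a lift of $W$, is bijective.

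Factor this restriction as
\[
\FIsoc(X,Y) \longrightarrow \FIsoc(U,Y) \longrightarrow \FIsoc(U,W),
\]
where the first arrow shrinks the smooth locus with the compactification $Y$ fixed, and the second shrinks the compactification with the smooth locus $U$ fixed. For the first arrow, an analytic continuation argument combined with passage to the slope-$0$ piece of the (generic) slope filtration reduces the extension of horizontal, $\sigma$-invariant sections to the unit-root case, which by Remark~\ref{R:unit root 1 plus 2} is equivalent to extending morphisms of \'etale $\QQ_p$-local systems from $U$ to $X$; this holds because $\pi_1(U) \to \pi_1(X)$ is surjective when $U$ is open dense in the connected smooth variety $X$. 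For the second arrow, one must extend $\sigma$-invariant horizontal sections across the codimension-one boundary $Y \setminus W$ within a fixed $U$. This is the deep step, and is the content of the series of Kedlaya papers on full faithfulness for overconvergent $F$-isocrystals, with the general boundary case completed by Shiho. The strategy is to use Remark~\ref{R:affine cover} to construct fibrations of $U$ over $\AAA^{n-1}_k$ with curve fibers transverse to the relevant boundary components, reduce to the one-dimensional local model $\FIsoc^\dagger(k((t))) \to \FIsoc(k((t)))$, where the local monodromy theorem (Remark~\ref{R:Crew1}) provides the required extension on each isoclinic subquotient (Remark~\ref{R:isoclinic Robba}), and then glue along the base using full faithfulness of the first arrow together with de Jong's theorem on extending overconvergent $F$-isocrystals across codimension-one loci.

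The main obstacle is the second arrow: extending horizontal, $\sigma$-invariant sections across boundary components of $Y \setminus W$ requires the local monodromy theorem together with a delicate fibration/d\'evissage argument, which accounts for the collective attribution of the theorem. The first arrow and the internal-hom reduction are comparatively formal. The named particular cases follow by specialization: $(U, W) = (X, X)$ gives $\FIsoc(X,Y) \to \FIsoc(X)$ and (for $Y$ proper) $\FIsoc^\dagger(X) \to \FIsoc(X)$; $W = Y$ gives $\FIsoc(X,Y) \to \FIsoc(U,Y)$; $Y = X$, $W = U$ gives $\FIsoc(X) \to \FIsoc(U)$; and $Y = W$ proper gives $\FIsoc^\dagger(X) \to \FIsoc^\dagger(U)$.
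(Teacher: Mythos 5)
Your factorization
\[
\FIsoc(X,Y) \to \FIsoc(U,Y) \to \FIsoc(U,W)
\]
is valid and is in fact slightly more economical than the paper's (which factors through $\FIsoc(X)$ and $\FIsoc(U,X)$, reducing first to $W = U$ by composing with the faithful functor $\FIsoc(U,W) \to \FIsoc(U)$); the internal-hom reduction is also standard and correct. However, your argument for the first arrow has a genuine gap. You propose to pass to ``the slope-$0$ piece of the (generic) slope filtration'' in order to reduce to the unit-root case, invoke Remark~\ref{R:unit root 1 plus 2}, and conclude via surjectivity of $\pi_1(U) \to \pi_1(X)$. But objects of $\FIsoc(X,Y)$ do not in general admit slope filtrations (see Example~\ref{E:elliptic unit-root} and the discussion preceding Remark~\ref{R:reverse filtration}); the slope filtration exists only in the convergent category, and only under a constancy hypothesis on the Newton polygon. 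Even if one passes to an open subset $U_0 \subseteq U$ where the convergent restriction has a slope filtration, the resulting unit-root subquotient lives in $\FIsoc(U_0)$ and does not remember the $(X,Y)$-overconvergence structure of $\calE$, so the desired extension from $U$ to $X$ inside $\FIsoc(-,Y)$ does not follow. The actual proof of full faithfulness of $\FIsoc(X,Y) \to \FIsoc(U,Y)$ (the reference the paper cites, \cite[Theorem~5.2.1]{kedlaya-semi1}, extending \'Etesse \cite[Th\'eor\`eme~4]{etesse}; see Remark~\ref{R:fully faithful1}) is a direct rigid-analytic continuation argument that does not go through slopes or the unit-root case at all.

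The slope filtration, local monodromy theorem, and fibration ideas you describe are instead the correct ingredients for the \emph{second} arrow, $\FIsoc(U,Y) \to \FIsoc(U)$, via reduction to the local model $\FIsoc^\dagger(k((t))) \to \FIsoc(k((t)))$ as in \cite{kedlaya-full} (cf.\ Remark~\ref{R:fully faithful local} and Remark~\ref{R:slope filtration Robba}). You have essentially attached the right tools to the wrong arrow. Note also that for $W$ strictly between $U$ and $Y$, one should first compose with the faithful functor $\FIsoc(U,W) \to \FIsoc(U)$ to reduce full faithfulness of the second arrow to the case $W = U$, as the paper does at the outset.
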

\begin{proof}
By forming the composition
\[
\FIsoc(X,Y) \to \FIsoc(U,W) \to \FIsoc(U),
\]
we immediately reduce the general problem to the case $W = U$. In this case, the functor in question factors as
\[
\FIsoc(X, Y) \to \FIsoc(X) = \FIsoc(X,X) \to \FIsoc(U,X) \to \FIsoc(U).
\]
By \cite[Theorem~5.2.1]{kedlaya-semi1},
the functor $\FIsoc(X,X) \to \FIsoc(U,X)$ is fully faithful.
By \cite[Theorem~4.2.1]{kedlaya-semi2},
the functors $\FIsoc(X, Y) \to \FIsoc(X)$, $\FIsoc(U,X) \to \FIsoc(U)$
are fully faithful.
\end{proof}

\begin{remark} \label{R:fully faithful1}
For unit-root isocrystals, the full faithfulness of $\FIsoc^\dagger(X) \to \FIsoc(X)$
is included in Theorem~\ref{T:unit root2}; the general case is treated in \cite[Theorem~1.1]{kedlaya-full}. The proof of full faithfulness of $\FIsoc(X,Y) \to \FIsoc(X)$ appearing in \cite[Theorem~4.2.1]{kedlaya-semi2} is a small variant of the proof of \cite[Theorem~1.1]{kedlaya-full}; in particular, it involves reduction to the local model statement (Remark~\ref{R:fully faithful local}).

The full faithfulness of $\FIsoc^\dagger(X) \to \FIsoc^\dagger(U)$ follows from 
\cite[Th\'eor\`eme~4]{etesse}. The argument is extended in \cite[Theorem~5.2.1]{kedlaya-semi1} to obtain full faithfulness of $\FIsoc(X,Y) \to \FIsoc(U, Y)$; see also \cite{shiho-log} for some stronger results. 
\end{remark}

\begin{remark} \label{R:fully faithful local}
The local model of Theorem~\ref{T:fully faithful1} is the statement that the functors
\[
\FIsoc(k\llbracket t \rrbracket) \to \FIsoc^\dagger(k((t))),
\FIsoc^\dagger(k((t))) \to \FIsoc(k((t)))
\]
are fully faithful. The full faithfulness of the composite functor $\FIsoc(k \llbracket t \rrbracket) \to \FIsoc(k((t)))$ is due to de Jong \cite[Theorem~9.1]{dejong-barsotti}, and is the key ingredient in his proof of the analogue of Tate's extension theorem for $p$-divisible groups in equal positive characteristic. 
(See also \cite[Theorem~1.1]{kedlaya-frobmod} for a streamlined exposition.)

In fact, de Jong's approach is to first show that
$\FIsoc(k\llbracket t \rrbracket) \to \FIsoc^\dagger(k((t)))$ is fully faithful,
then to show that the restriction of 
$\FIsoc^\dagger(k((t))) \to \FIsoc(k((t)))$ to the essential image of $\FIsoc(k \llbracket t \rrbracket)$ is fully faithful. Both steps make essential use of the functor
$\FIsoc^\dagger(k((t))) \to \FIsoc^\ddagger(k((t)))$; for example, 
it is crucial that objects of $\FIsoc(k\llbracket t \rrbracket)$ admit slope filtrations in
$\FIsoc^\ddagger(k \llbracket t \rrbracket)$ (Remark~\ref{R:slope filtration Robba plus}).
The argument also makes essential use of the reverse slope filtration
(Remark~\ref{R:reverse filtration}).

Building on de Jong's approach, full faithfulness of
$\FIsoc^\dagger(k((t))) \to \FIsoc(k((t)))$ was established in \cite[Theorem~5.1]{kedlaya-full}.
The argument follows \cite{dejong-barsotti} fairly closely, except that Remark~\ref{R:slope filtration Robba plus} is replaced by Remark~\ref{R:slope filtration Robba} (see also Remark~\ref{R:Crew conjecture}).

Although this is not explained in \cite{dejong-barsotti}, one may use the results of that paper to establish full faithfulness of $\FIsoc(X) \to \FIsoc(U)$.
However, even if one does this, the argument still implicitly refers to $\FIsoc^\dagger(X)$; in fact, despite the fact that the statement can be formulated using only convergent $F$-isocrystals, we know of no proof that entirely avoids the use of overconvergent $F$-isocrystals.
\end{remark}

\begin{remark} \label{R:Liouville}
If one considers isocrystals without Frobenius structure,
then the analogue of full faithfulness for $\FIsoc^\dagger(X) \to \FIsoc^\dagger(U)$ holds
(by the same references as in Remark~\ref{R:fully faithful1}).
but the analogue of full faithfulness for $\FIsoc^\dagger(X)$ to $\FIsoc(X)$ fails
(see \cite{abe-full}).
The latter is related to known pathologies in the theory of $p$-adic differential equations related to $p$-adic Liouville numbers (i.e., $p$-adic integers which are overly well approximated by ordinary integers); see \cite{kedlaya-simons} for more discussion.
\end{remark}

\begin{remark}
An alternate approach to the full faithfulness problem for $\FIsoc^\dagger(X) \to \FIsoc(X)$, which does not go through the local model or depend on Crew's conjecture, is suggested by recent work of Ertl \cite{ertl} on an analogous problem in de Rham--Witt cohomology.
\end{remark}

On a related note, we mention the following results.
\begin{theorem}[Kedlaya]  \label{T:boundary glueing}
The functors
\begin{gather*}
\FIsoc(X,Y) \to \FIsoc(U,Y) \times_{\FIsoc(U)} \FIsoc(X) \\
\FIsoc^\dagger(X) \to \FIsoc^\dagger(U) \times_{\FIsoc(U)} \FIsoc(X)
\end{gather*}
are equivalences of categories.
\end{theorem}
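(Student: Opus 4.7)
The second statement follows from the first applied to a Nagata compactification $Y$ of $X$, since then $\FIsoc(X, Y) = \FIsoc^\dagger(X)$ and $\FIsoc(U, Y) = \FIsoc^\dagger(U)$, so I focus on the first. Full faithfulness is formal from Theorem~\ref{T:fully faithful1}: given $\calF, \calG \in \FIsoc(X, Y)$ and a morphism $(f_U, f_X)$ in the fiber product category, full faithfulness of $\FIsoc(X, Y) \to \FIsoc(U, Y)$ produces a unique $f \colon \calF \to \calG$ with $f|_{(U, Y)} = f_U$; then $f|_X$ and $f_X$ both restrict to $f_U|_U$ in $\FIsoc(U)$, so full faithfulness of $\FIsoc(X) \to \FIsoc(U)$ forces $f|_X = f_X$.

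For essential surjectivity, the stack property lets me assume $X$ and $Y$ are affine. Choose a formal lift $P$ of $Y$ smooth near $X$ with Frobenius lift $\sigma$ on the open formal subscheme $Q \subset P$ lifting $X$; objects of $\FIsoc(U, Y)$ are then bundles with connection and Frobenius on strict neighborhoods of $]U[ \subset P_K$, while objects of $\FIsoc(X)$ live on $]X[ = Q_K$. Given descent data $(\calE_1, \calE_2, \alpha)$ with $\alpha \colon \calE_1|_{]U[} \xrightarrow{\sim} \calE_2|_{]U[}$, realize $\calE_1$ on a strict neighborhood $V$ of $]U[$ in $P_K$; the overlap $V \cap Q_K$ is then a strict neighborhood of $]U[$ in $Q_K$. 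Applying full faithfulness of $\FIsoc(U, X) \to \FIsoc(U)$ (again Theorem~\ref{T:fully faithful1}) to $\calE_1|_{V \cap Q_K}$ and $\calE_2|_{V \cap Q_K}$ extends $\alpha$ to an isomorphism $\widetilde\alpha$ of $F$-isocrystals on some strict neighborhood $W \subseteq V \cap Q_K$ of $]U[$ in $Q_K$. After shrinking $V$ within the cofinal system of standard strict neighborhoods to arrange $V \cap Q_K = W$, I glue $\calE_1|_V$ with $\calE_2$ along $\widetilde\alpha$ to produce a coherent sheaf with integrable connection and Frobenius structure on $V \cup Q_K$; this union is itself a strict neighborhood of $]X[$ in $P_K$, and the glued sheaf is automatically locally free by Remark~\ref{R:not locally free}, so it defines an object of $\FIsoc(X, Y)$ whose image recovers the descent data up to canonical isomorphism.

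The main technical obstacle is the geometric bookkeeping with strict neighborhoods: arranging the overlap $V \cap Q_K$ to coincide with $W$ and confirming that $V \cup Q_K$ qualifies as a strict neighborhood of $]X[$. Both reduce to manipulations within a well-behaved cofinal system parametrized by powers of a global function cutting out $X \setminus U$ on $Y$, standard in either the rigid-analytic or Berkovich framework. Independence from the choices of $P$ and $\sigma$ follows from the usual comparison arguments behind Definition~\ref{D:overconvergent realization}, and the stack property then globalizes the construction to arbitrary $X, Y$; overall, the theorem should be regarded as a package that repackages Theorem~\ref{T:fully faithful1} together with a simple gluing procedure.
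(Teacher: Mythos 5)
Since the paper defers its proof to \cite[Proposition~5.3.7]{kedlaya-semi1}, there is no in-paper argument to compare against; your gluing construction is the natural one and is essentially correct, so I would regard it as the same approach. The full faithfulness step is airtight, and the reduction of the second statement to the first via a proper compactification is valid. For essential surjectivity, the step you defer to ``bookkeeping'' is precisely where the content lies, so it is worth spelling out the compatibility that makes it go: if $(f_i)$ cut out the closure of $X - U$ in $Y$ and $(g_j)$ cut out $Y - X$, then the standard strict neighborhood of the tube of $U$ in $P_K$ defined by $\max_{i,j} |f_i g_j| \geq \lambda$ intersects $Q_K$ exactly in the standard strict neighborhood of the tube of $U$ in $Q_K$ defined by $\max_i |f_i| \geq \lambda$ (since $\max_j |g_j| = 1$ on $Q_K$), which is exactly the cofinality your shrinking step needs. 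You should also say a word about admissibility of the covering $\{V, Q_K\}$ of $V \cup Q_K$ before gluing, though that too follows from the usual strict-neighborhood formalism. One minor redundancy: gluing two vector bundles along an isomorphism on the overlap already produces a vector bundle, so the appeal to Remark~\ref{R:not locally free} is unnecessary.
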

\begin{proof}
See \cite[Proposition~5.3.7]{kedlaya-semi1}.
\end{proof}

\begin{cor} \label{C:boundary glueing1}
Set notation as in Remark~\ref{R:pullback functoriality} and suppose that $X' \to X$ is dominant and $Y' \to Y$ is surjective. Then the functors
\begin{gather*}
\FIsoc(X,Y) \to \FIsoc(X',Y') \times_{\FIsoc(X')} \FIsoc(X) \\
\FIsoc^\dagger(X) \to \FIsoc^\dagger(X') \times_{\FIsoc(X')} \FIsoc(X)
\end{gather*}
are equivalences of categories.
\end{cor}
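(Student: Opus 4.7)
The $\FIsoc^\dagger$ assertion reduces to the $\FIsoc(X,Y)$ one upon choosing proper compactifications $Y, Y'$ with $Y' \to Y$ surjective (using Lemma~\ref{L:change compactification}), so I focus on the first equivalence. The plan is to exploit the dominance of $f: X' \to X$ to reduce to Theorem~\ref{T:boundary glueing}. By Chevalley's theorem on constructible sets, the image of $f$ contains a dense open subscheme $U \subseteq X$. Set $V := f^{-1}(U) \cap X' \subseteq X'$, which is open in $X'$ and hence in $Y'$. Applying Theorem~\ref{T:boundary glueing} to the open immersion $U \hookrightarrow X$ (with ambient $Y$) and to $V \hookrightarrow X'$ (with ambient $Y'$), and substituting the resulting equivalence for $\FIsoc(X', Y')$ into the target fiber product, reduces the corollary to showing that the natural functor
\[
\FIsoc(U, Y) \times_{\FIsoc(U)} \FIsoc(X) \longrightarrow \FIsoc(V, Y') \times_{\FIsoc(V)} \FIsoc(X)
\]
is an equivalence.

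Full faithfulness of this functor is essentially formal from Theorem~\ref{T:fully faithful1}. Given $\calE_1, \calE_2$ in the left-hand side, a morphism between their images on either side is determined by its $\FIsoc(X)$-component (since the restriction functors $\FIsoc(U,Y) \to \FIsoc(U)$ and $\FIsoc(V,Y') \to \FIsoc(V)$ are fully faithful, and $\FIsoc(X) \to \FIsoc(U)$ and $\FIsoc(X) \to \FIsoc(V)$ control the relevant lifts); conversely, any morphism in $\FIsoc(X)$ lifts uniquely to either side subject to the same compatibility. For essential surjectivity, given $(\calE_V, \calF, \alpha) \in \FIsoc(V,Y') \times_{\FIsoc(V)} \FIsoc(X)$, one must construct $\calE_U \in \FIsoc(U,Y)$ with $\calE_U|_U \cong \calF|_U$ and $\calE_U|_{(V,Y')} \cong \calE_V$ compatibly. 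After shrinking $U$ (and hence $V$) using generic flatness, and absorbing any purely inseparable direction into a Frobenius twist of $f$ permitted by the $F$-structure, one may assume $V \to U$ is étale surjective; the surjectivity of $Y' \to Y$ guarantees that $(V,Y')$ covers all the boundary behavior of $(U,Y)$ needed to specify the overconvergent extension.

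The main obstacle will be executing the descent of $\calE_V$ along the cover $(V, Y') \to (U, Y)$. Stackiness of $\FIsoc$ for the étale topology on $Y$ (Definition~\ref{D:overconvergent realization}) handles descent along étale covers of the ambient space, but here $Y' \to Y$ is only assumed surjective, not étale, so one must verify by hand that strict neighborhoods of $Q_K$ (in the formal-scheme model of $(U,Y)$) can be reconstructed from the corresponding strict neighborhoods in a formal model of $(V,Y')$ equipped with compatible Frobenius lifts. Once this is done locally on $Y$ using Elkik's lifting (as in Definition~\ref{D:overconvergent realization}) and glued via étale descent on the interior cover $V \to U$, the resulting $\calE_U$ is well defined up to canonical isomorphism by the full faithfulness already established, completing the proof.
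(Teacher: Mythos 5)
Your formal reductions (deducing the $\FIsoc^\dagger$ statement from the $\FIsoc(X,Y)$ one, the full faithfulness via Theorem~\ref{T:fully faithful1}, and the use of Theorem~\ref{T:boundary glueing} to pass to $U \subseteq X$ and $V = f^{-1}(U)$) are fine, but the argument stops exactly where the actual content begins, and at two points it does not go through. First, the claim that after shrinking $U$, invoking generic flatness, and Frobenius-twisting one may assume $V \to U$ is \'etale is unjustified when $X' \to X$ has positive-dimensional generic fibers (e.g.\ $\AAA^2_k \to \AAA^1_k$ is never \'etale, no matter how one shrinks or twists); to reach the generically finite case one must first replace $X'$ by a smaller $X'' \to X'$ (a multisection), which is permissible precisely because, by full faithfulness of $\FIsoc(X,Y) \to \FIsoc(X)$, the hypothesis only becomes weaker — a reduction your proposal never makes. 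Second, and more seriously, the step you yourself label ``the main obstacle'' — descending $\calE_V$ along $(V,Y') \to (U,Y)$ — is not carried out and cannot be disposed of by the stack property: Definition~\ref{D:overconvergent realization} gives descent only for \'etale covers of the ambient variety, whereas here $Y' \to Y$ is merely surjective, and mere surjectivity gives no control whatsoever over strict neighborhoods. Saying that ``one must verify by hand that strict neighborhoods can be reconstructed'' is naming the difficulty, not resolving it; no such descent statement is available in the toolkit of the paper.

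The paper's proof is designed to avoid any such descent. After using Theorem~\ref{T:fully faithful1} to justify replacing $X'$ by an $X''$ finite \'etale over a dense open subscheme of $X$, and Theorem~\ref{T:boundary glueing} to reduce to $X' \to X$ finite, it replaces $Y$ by a blowup away from $X$ (harmless by Lemma~\ref{L:change compactification}) and applies Gruson--Raynaud flattening to make $Y' \to Y$ finite flat and surjective. Then, instead of descending, it pushes forward: if $f^*\calE$ extends to $\calF \in \FIsoc(X',Y')$, the pushforward $f_*\calF \in \FIsoc(X,Y)$ of Remark~\ref{R:etale pushforward} restricts on $\FIsoc(X)$ to an object containing $\calE$ as a direct summand, and full faithfulness of $\FIsoc(X,Y) \to \FIsoc(X)$ promotes the corresponding idempotent to $f_*\calF$ itself, yielding the desired extension of $\calE$. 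If you want to complete your outline, you should either import this trace/pushforward mechanism or supply an actual proof of descent of overconvergent isocrystals along such non-\'etale morphisms of pairs; as written, the central step of your argument is missing.
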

\begin{proof}
By Theorem~\ref{T:fully faithful1}, the functor $\FIsoc(X,Y) \to \FIsoc(X)$ is fully faithful; there is thus no harm in replacing $X'$ with $X''$ for some morphism $X'' \to X'$.
In particular, we may reduce to the case where $X''$ is finite \'etale over some open dense subscheme of $X$.
Using Theorem~\ref{T:boundary glueing}, we may reduce further to the case where $X' \to X$ is finite.
By Lemma~\ref{L:change compactification}, we may also replace $Y$ with a blowup away from $X$;
using Gruson--Raynaud flattening \cite{gruson-raynaud}, we may further reduce to the case where
$Y' \to Y$ is finite flat (and surjective). In this case, if $f^* \calE \in \FIsoc(X')$ extends to $\calF \in \FIsoc(X',Y')$,
then using Remark~\ref{R:etale pushforward},
the restriction of $f_* \calF \in \FIsoc(X,Y)$ to $\FIsoc(X)$
has a summand isomorphic to $\calE$. By Theorem~\ref{T:fully faithful1},
the decomposition extends to a decomposition of $f_* \calF$ itself.
\end{proof}

\begin{remark} \label{R:boundary glueing}
In the case where $\dim(X) = 1$, Theorem~\ref{T:boundary glueing} admits a local variant: if $Y - X$ consists of a single $k$-rational point $x$, for $t$ a uniformizer of $Y$ at $x$, the functors
\begin{gather*}
\FIsoc(Y) \to \FIsoc(X,Y) \times_{\FIsoc(k((t)))} \FIsoc(k \llbracket t \rrbracket) \\
\FIsoc(X, Y) \to \FIsoc(X) \times_{\FIsoc(k((t)))} \FIsoc^\dagger(k((t)))
\end{gather*}
are equivalences.
\end{remark}

We next consider extension of subobjects.
\begin{theorem}[Kedlaya] \label{T:extend subobject}
Any subobject in $\FIsoc(U,Y)$ of an object of $\FIsoc(X,Y)$ extends to $\FIsoc(X,Y)$.
In particular, any subobject in $\FIsoc^\dagger(U)$ of an object of $\FIsoc^\dagger(X)$ extends to $\FIsoc^\dagger(X)$.
\end{theorem}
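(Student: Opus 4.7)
The plan is to reduce to the rank one case via a Pl\"ucker construction, and then to produce the rank one extension directly by saturation inside the ambient bundle. Full faithfulness (Theorem~\ref{T:fully faithful1}) enters only to guarantee uniqueness.

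First I would reduce to $\rank \calF_U = 1$ by passing to top exterior powers. Setting $r = \rank \calF_U$, the rank one subobject $\wedge^r \calF_U \subset (\wedge^r \calE)|_U$ is an easier target. Given an extension $\calL \subset \wedge^r \calE$ in $\FIsoc(X,Y)$, the Pl\"ucker relations---which cut out decomposable lines in $\wedge^r \calE$ as a Zariski-closed subvariety of $\PP(\wedge^r \calE)$---hold at every point of the dense open $U$, hence on all of $X$, so $\calL$ is fibrewise decomposable. It therefore determines a rank $r$ sub-bundle $\calF \subset \calE$, described fibrewise by $\calF_x = \{v \in \calE_x : v \wedge \calL_x = 0 \text{ in } \wedge^{r+1}\calE_x\}$. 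Frobenius stability is inherited from $\calL$ by functoriality of the construction; connection stability follows from the Kodaira-Spencer argument of Remark~\ref{R:filtration no connection}, since the $\calO$-linear second fundamental form $\calF \to (\calE/\calF) \otimes \Omega^1$ is a morphism of vector bundles vanishing on the dense open $U$, hence vanishing everywhere.

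For the rank one case, I would represent $\calE$ by a vector bundle on a strict neighborhood $V$ of the tube of $X$ in a lift $P_K$ of $Y$, and $\calL_U$ by a line sub-bundle on a strict neighborhood $V' \subseteq V$ of the tube of $U$. Define $\calL^{\mathrm{pre}} \subset \calE$ on $V$ as the coherent subsheaf whose sections on an open $W \subseteq V$ are those $s \in \Gamma(W,\calE)$ with $s|_{W \cap V'} \in \Gamma(W \cap V', \calL_U)$; coherence is automatic from noetherianity of affinoid algebras, and a stalk computation at any $v \in V'$ (working in a neighborhood contained in $V'$) shows $\calL^{\mathrm{pre}}|_{V'} = \calL_U$. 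Let $\calL$ be the saturation of $\calL^{\mathrm{pre}}$ inside $\calE$, i.e., the preimage in $\calE$ of the torsion subsheaf of $\calE/\calL^{\mathrm{pre}}$. Because $\calL$ has generic rank one and is a saturated coherent subsheaf of a vector bundle on a smooth rigid analytic space, it is a line sub-bundle of $\calE$ on $V$, with $\calL|_{V'} = \calL_U$ (already saturated). Frobenius and connection stability of $\calL$ follow from those of $\calE$ and $\calL_U$ by the same density-of-$V'$-in-$V$ argument as in the previous paragraph, so $\calL$ defines an object of $\FIsoc(X,Y)$ extending $\calL_U$.

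Combining the two steps gives the desired subobject $\calF \hookrightarrow \calE$ in $\FIsoc(X,Y)$; its uniqueness (and independence of choices) is automatic from the full faithfulness of $\FIsoc(X,Y) \to \FIsoc(U,Y)$ provided by Theorem~\ref{T:fully faithful1}. The $\FIsoc^\dagger$ statement follows by taking $Y$ a proper compactification of $X$. The main obstacle will be the technical verification in the rank one step: ensuring that the saturation construction behaves correctly with respect to the strict-neighborhood equivalence in the definition of $\FIsoc(X,Y)$ and is compatible with the overconvergent structure on $\calE$, so that the resulting $\calL$ is a genuine overconvergent isocrystal rather than merely a convergent one.
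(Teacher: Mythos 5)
Your Pl\"ucker reduction is the easy half; the gap is in the rank-one step, and it sits exactly where the analytic geometry departs from the scheme-theoretic picture you are importing. Your subsheaf $\calL^{\mathrm{pre}}$ imposes no condition on an admissible open $W \subseteq V$ with $W \cap V' = \emptyset$, and such opens always exist when $U \neq X$: a strict neighborhood $V'$ of the tube of $U$ is \emph{not} dense in a strict neighborhood $V$ of the tube of $X$, because the tube of $X \setminus U$ is a nonempty admissible open of $V$ most of which $V'$ misses. Concretely, for $Y = \PP^1$, $X = \AAA^1$, $U = \mathbb{G}_m$ one may take $V = \{|t| \leq \mu\}$ with $\mu > 1$ and $V' = \{\lambda \leq |t| \leq \mu\}$ with $\lambda < 1$; on the disc $\{|t| < \lambda\}$ (the tube of the missing point) your condition is vacuous, so $\calL^{\mathrm{pre}} = \calE$ there, and since saturation only enlarges a subsheaf you never obtain a line subbundle. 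The coherence claim is not merely unjustified but impossible: a coherent subsheaf of $\calE$ on an irreducible $V$ cannot have rank $1$ on $V'$ and rank $2$ on a nonempty open disc. The hidden assumption is the scheme-level fact that a dense open meets every nonempty open, which is false here. There is also a decisive sanity check: nothing in your sheaf-theoretic construction uses that $V'$ is a strict neighborhood of the tube of $U$ rather than the tube itself, so if the argument worked it would equally prove that every subobject in $\FIsoc(U)$ of an object of $\FIsoc(X)$ extends to $\FIsoc(X)$ --- and that is false, by Example~\ref{E:elliptic unit-root} and Remark~\ref{R:subobjects}: the unit-root subobject over the ordinary locus cannot extend across a supersingular point even convergently, since the slope polygon there has no slope-zero part (Theorem~\ref{T:polygon variation}, Remark~\ref{R:rank 1}). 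So some analytic input distinguishing $\FIsoc(U,Y)$ from $\FIsoc(U)$ is indispensable.

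That missing input is the actual content of the theorem, which the paper does not reprove but cites as \cite[Proposition~5.3.1]{kedlaya-semi1}: one carries the sub-$(F,\nabla)$-bundle across the tube of $X \setminus U$ using the convergence of the connection --- on the tube of a point the Taylor isomorphism of a convergent isocrystal converges on the whole open polydisc, so the subbundle given on the overlap with $V'$ can be transported across the tube --- with Frobenius used to control the outcome and full faithfulness (Theorem~\ref{T:fully faithful1}) used for uniqueness and glueing, exactly as in the local extension problems of Remark~\ref{R:fully faithful local}. For what it is worth, your reduction to rank one is salvageable, but for a different reason than ``density'': vanishing of the Pl\"ucker sections is a closed condition, and a section of a coherent sheaf vanishing on a nonempty admissible open of an irreducible reduced rigid space vanishes identically (one must arrange, by shrinking, that every component of $V$ meets $V'$). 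Moreover the second-fundamental-form appeal is unnecessary, since $\nabla$-stability of $\{v : v \wedge \calL = 0\}$ follows formally from $\nabla$-stability of $\calL$. But this reduction only relocates the difficulty; the rank-one extension across the tube is where the theorem lives, and saturation cannot supply it.
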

\begin{proof}
See \cite[Proposition~5.3.1]{kedlaya-semi1}.
\end{proof}

\begin{remark} \label{R:subobjects}
By contrast with Theorem~\ref{T:extend subobject}, not every subobject in $\FIsoc(X)$ of an object of $\FIsoc^\dagger(X)$ extends to $\FIsoc^\dagger(X)$. For example, set notation as in Example~\ref{E:elliptic unit-root}.
If the unit-root subobject of $\calE$ in $\FIsoc(U)$ could be extended to
$\FIsoc^\dagger(U)$, then by Theorem~\ref{T:fully faithful1}
and Theorem~\ref{T:boundary glueing} 
it would also extend to $\FIsoc^\dagger(X)$; this would imply that for any point $x \in X$ in the supersingular locus, the rigid cohomology of the elliptic curve corresponding to $x$ contains a distinguished line. However, using the endomorphism ring of such a curve (which is an order in a quaternion algebra over $\QQ$) one sees easily that no such distinguished line can exist.
\end{remark}

\begin{remark}
Given an exact sequence
\[
0 \to \calE_1 \to \calE \to \calE_2 \to 0
\]
with $\calE_1, \calE_2 \in \FIsoc^\dagger(X)$ and $\calE \in \FIsoc(X)$,
it does not follow that $\calE \in \FIsoc^\dagger(X)$;
for instance, this already fails in case $X = \AAA^1_K$ and $\calE_1, \calE_2$ are both the constant object in $\FIsoc^\dagger(X)$.
Similarly, if $\calE_1, \calE_2 \in \FIsoc^\dagger(X)$ and $\calE \in \FIsoc^\dagger(U)$, it does not follow that $\calE \in \FIsoc^\dagger(X)$ unless we allow for logarithmic structures (see Definition~\ref{D:log structure}).
\end{remark}

Although convergent subobjects of overconvergent $F$-isocrystals are in general not themselves overconvergent, they still seem to capture some structural information in the overconvergent category. 

\begin{remark}\label{R:optimistic conjecture}
In a previous version of this paper, we stated the following optimistic conjecture.
Let $\calE_1, \calE_2 \in \FIsoc^\dagger(X)$ be irreducible.
Let $\calF_1, \calF_2$ be objects in $\FIsoc(X)$ which are subobjects of $\calE_1, \calE_2$, respectively.
Then for every morphism $\calF_1 \to \calF_2$ in $\FIsoc(X)$, there exists a morphism $\calE_1 \to \calE_2$ in $\FIsoc^\dagger(X)$ such that the diagram
\[
\xymatrix{
\calF_1 \ar[r] \ar[d] & \calF_2 \ar[d] \\
\calE_1 \ar[r] & \calE_2
}
\]
commutes in $\FIsoc(X)$. This turns out to be false; see Example~\ref{exa:subobject morphisms} below.

However, we have no counterexample against the restricted form of the optimistic conjecture in which $\calF_i$ is an isoclinic subobject of $\calE_i$ of slope equal to the minimal generic slope of $\calE_i$.
Moreover, some partial results towards the restricted statement are known. 
\begin{itemize}
\item
Suppose that 
$\calE_1, \calE_2$ admit slope filtrations with respective first steps $\calF_1,\calF_2$. In this case, Tsuzuki \cite{tsuzuki-minimal} has proved this when either $X$ is a curve or $k$ is finite
(the second case reduces to the first via Theorem~\ref{T:cut irreducible} below).
Under the additional hypothesis that $\calE_1, \calE_2$ have all Frobenius slopes in the interval $[0,1]$,
an alternate proof has been given by D'Addezio (in preparation).

In particular, an irreducible overconvergent $F$-isocrystal with constant slope polygon is uniquely determined by the first step of its slope filtration.
Note that the condition on constant slope polygon is not essential: by Theorem~\ref{T:polygon variation} it holds on an open dense subspace $U$ of $X$, and by Theorem~\ref{T:fully faithful1} any morphism $\calE_1 \to \calE_2$ in
$\FIsoc^\dagger(U)$ lifts uniquely to a morphism in $\FIsoc^\dagger(X)$.

\item
Suppose that $X$ is irreducible, 
$\calF_1$ is the maximal subobject of $\calE_1$ in $\FIsoc(X)$ of minimal generic slope,
and $\calE_2 = \calF_2 = \calO_X$. The statement in this case is due to 
Ambrosi--D'Addezio \cite[Theorem~1.1.1]{ambrosi-daddezio}.
\end{itemize}
In addition, it should be possible to formulate other restricted forms of the optimistic conjecture, not contained in the previous one, for which one expects an affirmative answer; but it is not clear how to make the original formulation airtight. One option is Crew's \emph{parabolicity conjecture}, formulated in terms of monodromy groups
(see Definition~\ref{D:geometric monodromy}) and recently proved by D'Addezio \cite{daddezio2}.
\end{remark}

The following counterexample against the optimistic conjecture of Remark~\ref{R:optimistic conjecture} was provided by Marco D'Addezio.
\begin{example}[D'Addezio] \label{exa:subobject morphisms}
Retain notation as in Example~\ref{E:elliptic unit-root},
and let $\calF \in \FIsoc(U)$ be the unit-root subobject of $\calE$.
Define the objects
\[
\calE_1 = \Sym^2(\calE), \qquad \calE_2 = \wedge^2 \calE
\]
in $\FIsoc^\dagger(X)$ and the subobjects
\[
\calF_1 = \calE \otimes \calF \subset \calE_1, \qquad \calF_2 = \calE_2
\]
in $\FIsoc(U)$. Then there is a surjective morphism $\calF_1 \to \calF_2$ in $\FIsoc(U)$ given by
\[
\calF_1 = \calE \otimes \calF \to (\calE/\calF) \otimes \calF \cong \calF_2,
\]
but this cannot arise from a (necessarily surjective) morphism $\calE_1 \to \calE_2$ in $\FIsoc^\dagger(U)$
because $\calE_1$ is irreducible in $\FIsoc^\dagger(U)$
(see \cite[Proposition~4.11]{crew-mono} or Example~\ref{exa:monodromy}).
\end{example}

One can ask whether extendability of an $F$-isocrystal can be characterized on the level of curves (note that this question has no local model). 
Here is an example of such a statement. (It should be possible to remove the hypothesis on $k$ using
Poonen's finite field Bertini theorem \cite{poonen} or related results.)

\begin{theorem}[Shiho] \label{T:cut by curves}
The following statements hold.
\begin{enumerate}
\item[(a)]
An object of $\FIsoc(U,Y)$ extends to $\FIsoc(X,Y)$ if and only if for every curve $C \subseteq Y$, the pullback object in $\FIsoc(C \times_Y U, C)$
extends to $\FIsoc(C \times_U X, C)$.
\item[(b)]
An object of $\FIsoc^\dagger(U)$ lifts to $\FIsoc^\dagger(X)$ if and only if for every curve $C \subseteq X$,
the pullback object in $\FIsoc^\dagger(C \times_X U)$
lifts to $\FIsoc^\dagger(C)$.
\end{enumerate}
\end{theorem}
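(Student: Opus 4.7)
The ``only if'' direction in both parts is immediate from the pullback functoriality of Remark~\ref{R:pullback functoriality}, so I focus on the ``if'' direction. The overall strategy combines uniqueness of extensions with Zariski-local existence. Full faithfulness (Theorem~\ref{T:fully faithful1}) makes extensions, when they exist, canonical; this reduces the problem to Zariski-local existence on $Y$ (resp.\ on $X$ in part (b)). Purity (Theorem~\ref{T:purity}(a)) handles the codimension $\geq 2$ part of $Z := X \setminus U$ automatically, so I may assume $Z$ is pure of codimension $1$; by Theorem~\ref{T:boundary glueing}, I may further work one irreducible divisorial component $D$ at a time, and by Zariski localization I may assume $D$ is smooth and cut out locally by a single coordinate function.

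Near a closed point $x \in D$, I would choose a curve $C \subseteq Y$ (resp.\ $C \subseteq X$) passing through $x$ transversally to $D$ and meeting $D$ nowhere else; the hypothesis produces an extension of the pullback of $\calE$ to an object on $C \cap X$ overconvergent within $C$ (resp.\ to an object of $\FIsoc^\dagger(C)$). Running $x$ over a dense collection of points in $D$, for instance by using the Belyi-type construction of Remark~\ref{R:affine cover} to produce an explicit family of curves sweeping out a neighborhood of $D$, and invoking uniqueness to identify curve-wise extensions on overlaps, I would assemble a formal extension along a strict neighborhood of the tube of $D$ and glue it to $\calE|_U$ via Theorem~\ref{T:boundary glueing} to produce the desired global extension.

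The main obstacle is this last assembly step: one must verify that the curve-wise overconvergent extensions genuinely fit together into an object defined on a strict neighborhood in a formal model of $Y$. Concretely, in local coordinates $t_1, \dots, t_d$ with $D = \{t_1 = 0\}$, one knows the connection matrix is overconvergent along each curve obtained by fixing generic values of $t_2, \dots, t_d$, and the crux is to promote this one-parameter overconvergence to a uniform estimate on a strict neighborhood in all coordinates. This is a nonarchimedean Hartogs-type phenomenon in the spirit of the extension argument in Remark~\ref{R:purity of stratification}, and will likely require an explicit Taylor expansion estimate combined with a careful choice of pencil of test curves through $D$ in order to control radii of convergence uniformly.
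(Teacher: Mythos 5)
The paper's ``proof'' is a citation to \cite[Theorem~0.1]{shiho-cut-extend}, so there is no internal argument to compare against; your proposal instead sketches a direct attack. The preliminary reductions are sound: full faithfulness (Theorem~\ref{T:fully faithful1}) makes extensions unique, so the problem becomes Zariski-local on $Y$; purity (Theorem~\ref{T:purity}(a)) discards the part of $X\setminus U$ of codimension at least $2$; and you can localize near a smooth irreducible component $D$ of the remaining divisor.

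The gap, which you correctly flag as the main obstacle, is the assembly step, and it is not a minor finishing move. Showing that overconvergence of the connection along each transversal curve implies overconvergence on a genuine strict neighborhood of the tube of $D$ is precisely the technical content of Shiho's theorem; it requires a careful quantitative analysis of how generic radii of convergence of $p$-adic differential equations vary in families, and none of the tools assembled in this article (Remark~\ref{R:purity of stratification}, Theorem~\ref{T:boundary glueing}, and so on) give it for free. Your closing sentence describes the shape of the missing argument but does not supply it. As evidence that such assembly steps are genuinely delicate rather than routine, note that the closely related cut-by-curves question for the extension $\FIsoc(X) \to \FIsoc(X,Y)$ (Conjecture~\ref{conj:cut over}) remains open for exactly this reason, with only partial results in \cite{shiho-cut-over}.

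A secondary issue is that your appeal to Theorem~\ref{T:boundary glueing} at the end is slightly off target. If the Hartogs-type step worked, you would directly obtain a bundle with connection and Frobenius on a strict neighborhood of $X$ inside a formal model of $Y$, which is already an object of $\FIsoc(X,Y)$ by the Zariski stack property of $\FIsoc$; no fiber-product glueing is needed. Theorem~\ref{T:boundary glueing} would only enter if you had separately produced the underlying convergent object in $\FIsoc(X)$, and you have not constructed that independently of the strict-neighborhood extension you are trying to produce.
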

\begin{proof}
In the case where $k$ is uncountable, we obtain (a) by applying \cite[Theorem~0.1]{shiho-cut-extend}
(see the proof of Theorem~\ref{T:cut by curves log});
this immediately implies (b). (This part of the argument applies even in the absence of a Frobenius structure.)
In the general case, one may amend the argument as in the footnote to \cite[Lemma 2.4.13]{abe-companion}.
\end{proof}

It is reasonable to expect an analogue of Theorem~\ref{T:cut by curves} for extension from convergent to overconvergent isocrystals, but this is presently unknown.
Somewhat weaker results have been obtained by \cite{shiho-cut-over}; 
for instance, one must assume that the underlying connection extends to a strict neighborhood.
\begin{conj} \label{conj:cut over}
An object of $\FIsoc(X)$ extends to $\FIsoc(X,Y)$ if and only if for every curve $C \subseteq Y$, the pullback object in $\FIsoc(C \times_Y X)$ extends to $\FIsoc(C \times_Y X, C)$.
In particular,
an object of $\FIsoc(X)$ extends to $\FIsoc^\dagger(X)$ if and only if for every curve $C \subseteq X$, the pullback object in $\FIsoc(C)$ extends to $\FIsoc^\dagger(C)$.
(This holds for unit-root objects by Theorem~\ref{T:unit root1} and Theorem~\ref{T:unit root2}.)
\end{conj}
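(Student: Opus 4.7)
The ``only if'' direction is immediate from pullback functoriality (Remark~\ref{R:pullback functoriality}): for any curve $C \subseteq Y$, the pair $(C \times_Y X, C) \to (X, Y)$ is a morphism of pairs, so any extension to $\FIsoc(X, Y)$ pulls back to one over $(C \times_Y X, C)$. For the ``if'' direction, fix $\calE \in \FIsoc(X)$ satisfying the hypothesis and choose a proper $X \subseteq Y$. By Lemma~\ref{L:change compactification} we may replace $Y$ by any blowup centered outside $X$, so by resolution along the boundary we may assume $Y$ is smooth and $D := Y \setminus X$ is a strict normal crossings divisor. I would argue by induction on the number of irreducible components of $D$: decomposing $D = D_1 \cup D'$ and setting $W = Y \setminus D'$, it suffices to extend $\calE$ to $\FIsoc(X, W)$, after which a second pass with $(X, W)$ in place of $(X, Y)$ disposes of the remaining components.

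The core step is thus to extend $\calE$ across a single smooth boundary component $D_1$. Let $\eta$ be the generic point of $D_1$; pullback of $\calE$ to the formal punctured neighborhood of $\eta$ yields an object $\calE_\eta$ of $\FIsoc(\kappa(\eta)((t)))$. An extension in $\FIsoc(X, W)$ forces $\calE_\eta$ to lie in the essential image of $\FIsoc^\dagger(\kappa(\eta)((t))) \to \FIsoc(\kappa(\eta)((t)))$, and such a lift is unique when it exists by full faithfulness (Remark~\ref{R:fully faithful local}). For each closed point $x \in D_1$ and each smooth curve $C \subseteq W$ meeting $D_1$ transversally at $x$, the curve-wise hypothesis produces an overconvergent lift on $C$, which by local full faithfulness determines a canonical overconvergent lift of the pullback of $\calE_\eta$ to $\FIsoc(\kappa(x)((t)))$. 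The plan is then to show that these pointwise lifts vary algebraically in $x$ along $D_1$, yielding a single overconvergent structure on a formal neighborhood of $D_1$ inside $W$. Combined with $\calE$ itself, a boundary-gluing statement along the lines of Theorem~\ref{T:boundary glueing} (generalized from the one-dimensional case of Remark~\ref{R:boundary glueing} to arbitrary smooth $D_1$) would then manufacture the desired object of $\FIsoc(X, W)$.

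The main obstacle is precisely the passage from pointwise curve-wise lifts to a coherent overconvergent structure along all of $D_1$. This is the ``add overconvergence'' analog of Shiho's cut-by-curves theorem \cite{shiho-cut-extend} underlying Theorem~\ref{T:cut by curves}, and only partial progress in this direction appears in \cite{shiho-cut-over}. Concretely, one must uniformly control the convergence radii of the overconvergent extensions as the test curve varies over $D_1$; the presence of a Frobenius structure forces exponents to be rational and so moderates the $p$-adic Liouville pathologies of Remark~\ref{R:Liouville}, but the required uniformity does not seem to follow from the curve-wise hypothesis by any direct compactness or fpqc descent argument, and likely demands a genuinely new cut-by-curves technique adapted to the overconvergent category.
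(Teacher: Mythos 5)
The statement you were asked to prove is explicitly labeled a conjecture in the paper, and the paper offers no proof: immediately before it the author writes that ``it is reasonable to expect an analogue of Theorem~\ref{T:cut by curves} for extension from convergent to overconvergent isocrystals, but this is presently unknown,'' and points to \cite{shiho-cut-over} for partial results. So there is no proof in the paper to compare your attempt against, and a complete proof is not expected of you.

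That said, your sketch is an accurate assessment of the situation. The ``only if'' direction via Remark~\ref{R:pullback functoriality} is indeed trivial and correct. For the ``if'' direction, the reduction via Lemma~\ref{L:change compactification} to the case where $Y$ is smooth and $D = Y \setminus X$ is an SNC divisor, followed by induction on boundary components, is the standard and reasonable opening move (it is the same shape as the reductions in the semistable reduction papers). One small caveat you should keep in mind: when replacing $Y$ by a blowup $Y'$, one should verify that the curve-wise hypothesis is preserved, i.e.\ that curves in $Y'$ whose image in $Y$ is again a curve inherit the extension property (this requires a little care when $C' \to f(C')$ is not birational, though an argument via Remark~\ref{R:etale pushforward} and Corollary~\ref{C:boundary glueing1} should handle it). You have correctly isolated the genuine obstruction: passing from pointwise overconvergent lifts along closed points of a boundary divisor $D_1$ to a single overconvergent structure on a strict neighborhood, which requires a uniform bound on the overconvergence radii as the test curve moves. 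This is exactly what Shiho's cut-by-curves theorem \cite{shiho-cut-extend} achieves for \emph{log-extendability} (Theorems~\ref{T:cut by curves} and~\ref{T:cut by curves log}) and what \cite{shiho-cut-over} only partially achieves for overconvergence, under the additional hypothesis that the underlying connection already extends to a strict neighborhood. Your diagnosis that the needed uniformity does not follow from any direct compactness or descent argument, and that the Frobenius structure tempers but does not eliminate the Liouville-number issues of Remark~\ref{R:Liouville}, matches the author's own framing of why this remains open. In short: you have not proved the conjecture, but you are not claiming to, and your identification of the missing ingredient is correct.
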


\begin{remark}
In conjunction with Theorem~\ref{T:cut by curves} (or more precisely, its expected extension to arbitrary $k$),
Conjecture~\ref{conj:cut over} would imply that
an object of $\FIsoc(U)$ extends to $\FIsoc(X)$ if and only if for every curve $C \subseteq X$, the pullback object in $\FIsoc(C \times_Y U)$ extends to $\FIsoc(C)$.
(Again, this holds for unit-root objects by Theorem~\ref{T:unit root1}.)
\end{remark}

One expects the following by analogy with Wiesend's theorem in the $\ell$-adic case
\cite{wiesend, drinfeld-deligne}, but we have no approach in mind except in the case where $k$ is finite.

\begin{conj} \label{conj:cut irreducible}
For $\calE \in \FIsoc^\dagger(X)$ irreducible, we can find a curve $C \subseteq X$ such that the pullback of $\calE$ to $\FIsoc^\dagger(C)$ is irreducible.
\end{conj}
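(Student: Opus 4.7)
The plan is to reduce Conjecture~\ref{conj:cut irreducible} to Wiesend's theorem in the $\ell$-adic case via the theory of $p$-adic companions, working under the author's explicit hypothesis that $k$ is finite. After extending coefficients from $K$ to $\overline{\QQ}_p$ and performing a Galois descent on the resulting decomposition into Galois-conjugate absolutely irreducibles, it suffices to treat the case where $\calE$ is absolutely irreducible; I take this as the starting point, noting that a careful choice of $C$ can be made to keep the conjugate pieces pairwise non-isomorphic on the curve.

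Fix a prime $\ell \neq p$ and an isomorphism $\iota: \overline{\QQ}_p \to \overline{\QQ}_\ell$. The central input is the existence of a crystalline-to-$\ell$-adic companion: a lisse $\overline{\QQ}_\ell$-sheaf $\calE_\ell$ on $X$ whose Frobenius characteristic polynomial at each closed point of $X$ matches, under $\iota$, that of $\calE$. For curves this is Abe's theorem; for higher-dimensional $X$ it is one of the main results of the sequel paper \cite{kedlaya-companions}. Since the semisimplification of a lisse $\overline{\QQ}_\ell$-sheaf on a variety over a finite field is determined by its Frobenius traces (Chebotarev density), absolute irreducibility of $\calE$ forces $\calE_\ell$ to be absolutely irreducible. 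Now apply Wiesend's theorem \cite{wiesend, drinfeld-deligne} to produce a smooth curve $C \subseteq X$ for which $\calE_\ell|_C$ is absolutely irreducible.

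To transfer irreducibility back to the $p$-adic side, observe that $\calE|_C$ and $\calE_\ell|_C$ remain $\iota$-companions on $C$, since the companion relation is characterized by matching Frobenius characteristic polynomials at closed points and closed points of $C$ are closed in $X$. Invoking Abe's companion equivalence on curves in the reverse direction, the absolute irreducibility of $\calE_\ell|_C$ transfers to $\calE|_C$. A variant that avoids a second use of companions is to compare $L$-functions directly: by the Lefschetz trace formula for rigid cohomology of \'Etesse--Le Stum together with its $\ell$-adic counterpart, the $L$-functions $L(C, \mathrm{End}(\calE|_C), s)$ and $L(C, \mathrm{End}(\calE_\ell|_C), s)$ coincide, so the order of the pole at $s=1$ agrees, yielding $\Hom_{\FIsoc^\dagger(C)}(\calE|_C, \calE|_C) = \overline{\QQ}_p$.

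The principal obstacle is the companion input in step two: constructing crystalline-to-$\ell$-adic companions for higher-dimensional $X$ is precisely the main technical payload of \cite{kedlaya-companions} and lies outside the expository scope here. If $\dim X = 1$ the conjecture is trivial, so the argument is nontrivial only in relative dimension at least one, where this input is essential. For perfect $k$ that is not finite the strategy collapses entirely---no Frobenius traces to anchor Chebotarev, no companion correspondence, and no Wiesend theorem---which explains the author's remark that no approach is presently known in that generality; any successful argument there would need to bypass the trace-function approach altogether, perhaps through a direct Bertini-type construction internal to the category of overconvergent $F$-isocrystals.
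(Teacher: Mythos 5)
This statement is a \emph{conjecture}, not a theorem, and the paper offers no proof of it: immediately after stating it, the author says ``we have no approach in mind except in the case where $k$ is finite,'' points out that Remark~\ref{R:subobjects} blocks any naive reduction to $\FIsoc(X)$, and then records as Theorem~\ref{T:cut irreducible} the single known partial result (Abe--Esnault), valid when $k$ is finite \emph{and} $\det(\calE)$ has finite order. So there is no ``paper's proof'' to compare against; what you have written is a proof sketch of a special case.

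Granting that, the main difficulty with your sketch is a circularity you do not acknowledge. You take as a black box the existence of a lisse $\overline{\QQ}_\ell$-companion $\calE_\ell$ of $\calE$ on a \emph{higher-dimensional} $X$, citing \cite{kedlaya-companions}. But in the published chain of dependencies the construction of $p$-to-$\ell$ companions in dimension $\geq 2$ is itself obtained by restricting to curves (where Abe's theorem provides the companion) and then gluing---and the gluing step requires precisely the Lefschetz-type statement you are trying to prove, namely that some curve $C \subseteq X$ captures the full monodromy of $\calE$. This is why the paper's remark following Theorem~\ref{T:cut irreducible} emphasizes that Abe--Esnault's argument ``relies on the theory of weights and the theory of companions'' and that the alternate proof in \cite{kedlaya-companions}, while ``using these ingredients,'' is ``quite different in nature'': neither proof is the two-step ``companion, then Wiesend, then companion back'' reduction you describe, because that route presupposes an input that is downstream of the result. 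A correct argument in the finite-field case must instead establish the existence of a monodromy-preserving curve by a direct weight- or class-field-theoretic mechanism, with companions entering only at the level of curves.

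On the positive side: your reduction via Lemma~\ref{L:finite order determinant} and Corollary~\ref{C:twist decomposition} to the case of finite-order determinant is the right first move and, carried out carefully, does show that Theorem~\ref{T:cut irreducible} implies the full conjecture for $k$ finite (twists by rank-one objects pulled back from the base are constant on every curve, so irreducibility on curves is unaffected). Your observation that the strategy ``collapses entirely'' for non-finite perfect $k$---no Frobenius traces, no Chebotarev, no Wiesend---accurately reflects the paper's own comment that no approach is known in that generality, and your $L$-function variant of the transfer step in the last paragraph is a sensible alternative to a second invocation of companions on the curve. But as a proof of the conjecture, the argument is incomplete: it reduces the statement to an input which, as currently proved in the literature, already uses the statement.
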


\begin{remark}
In light of Remark~\ref{R:subobjects}, Conjecture~\ref{conj:cut irreducible} cannot be proved by reduction from $\FIsoc^\dagger(X)$ to $\FIsoc(X)$.
\end{remark}

\begin{theorem}[Abe-Esnault] \label{T:cut irreducible}
Conjecture~\ref{conj:cut irreducible} holds in case $k$ is finite and $\det(\calE)$ is of finite order.
\end{theorem}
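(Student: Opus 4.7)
The plan is to reduce to the analogous Lefschetz statement for $\ell$-adic sheaves, where the work of Wiesend, Deligne, and Drinfeld \cite{wiesend, drinfeld-deligne} already applies, and then transfer back via the theory of $\ell$-adic companions. Under the hypotheses that $k$ is finite, $\calE$ is irreducible, and $\det(\calE)$ is of finite order, $\calE$ is (up to a finite-order twist) pure of weight zero by the $p$-adic analogue of Deligne's purity theorem, so its characteristic polynomials of Frobenius at closed points of $X$ have algebraic coefficients that are $\iota$-real for every embedding $\iota$.

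First, I would invoke Abe's extension of L.\ Lafforgue's global Langlands correspondence to overconvergent $F$-isocrystals. For every prime $\ell \neq p$ and every field isomorphism $\iota: \overline{\QQ}_p \to \overline{\QQ}_\ell$, this produces an irreducible lisse Weil $\overline{\QQ}_\ell$-sheaf $\calE_\ell$ on $X$ (the $\ell$-adic companion of $\calE$ via $\iota$) whose characteristic polynomials of Frobenius at each closed point of $X$ match those of $\calE$ under $\iota$. Irreducibility is preserved because the companion correspondence respects Jordan--H\"older constituents on the level of Frobenius traces, and by Chebotarev these determine the semisimplification.

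Second, I would apply Wiesend's theorem in the form proved by Deligne and Drinfeld: since $X$ is smooth over a finite field and $\calE_\ell$ is an irreducible lisse Weil sheaf on $X$, there exist a smooth curve $C$ and a morphism $C \to X$ such that the pullback $\calE_\ell|_C$ is again irreducible (and the image of $C$ can be arranged to be a locally closed curve inside $X$, so this qualifies as ``$C \subseteq X$'' in the sense of Conjecture~\ref{conj:cut irreducible}). Third, pull $\calE$ back to $C$ to get $\calE|_C \in \FIsoc^\dagger(C)$; the matching of Frobenius characteristic polynomials is preserved under pullback, so $\calE_\ell|_C$ remains the $\ell$-adic companion of $\calE|_C$. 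If $\calE|_C$ decomposed nontrivially in $\FIsoc^\dagger(C)$, then by the Langlands correspondence for curves (Abe; L.\ Lafforgue) each Jordan--H\"older constituent would have its own $\ell$-adic companion, and their direct sum would be a semisimple lisse sheaf with the same Frobenius traces as $\calE_\ell|_C$, forcing $\calE_\ell|_C$ itself to decompose (again by Chebotarev), which contradicts the output of Wiesend's theorem. Hence $\calE|_C$ is irreducible.

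The main obstacle, and essentially the entire content of the argument, is the availability of $\ell$-adic companions on $X$ itself in arbitrary dimension: this rests on Abe's deep construction of the $p$-adic Langlands correspondence for overconvergent $F$-isocrystals, which does not appear elsewhere in this expository excerpt. The finite-order hypothesis on $\det(\calE)$ is precisely what normalizes weights so that the Langlands machinery applies and produces an $\ell$-adic companion, while the irreducibility of $\calE$ is what guarantees that the companion is itself irreducible so that Wiesend's theorem has an irreducible sheaf to feed on. Once those ingredients are in place, the rest of the argument is the formal two-step ``transport across companions, apply Lefschetz, transport back'' schema.
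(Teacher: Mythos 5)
Your schema---transfer $\calE$ to an $\ell$-adic companion, apply the Wiesend/Drinfeld Lefschetz theorem on the $\ell$-adic side, and transfer back along the chosen curve---is not the argument of \cite{abe-esnault} cited in the text (which works with weights and Tannakian monodromy groups directly on the $p$-adic side), but rather the companion-theoretic ``alternate proof'' alluded to in the remark following the theorem and carried out in \cite{kedlaya-companions}. As written, however, your key input is mis-scoped. Abe's Langlands correspondence \cite{abe-companion} produces $\ell$-adic companions only when $\dim X = 1$; the existence of a lisse Weil $\overline{\QQ}_\ell$-sheaf $\calE_\ell$ on a higher-dimensional $X$ with matching Frobenius characteristic polynomials is a separate and substantially harder theorem, requiring uniform control of ramification of the curve-level companions and Drinfeld's gluing theorem on Deligne's conjecture \cite{drinfeld-deligne}. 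Moreover, in \cite{abe-esnault} that existence statement is deduced with the help of the very Lefschetz theorem you are trying to prove, so simply invoking ``companions on $X$'' risks circularity: to make your plan work one must construct $\calE_\ell$ on $X$ independently of Theorem~\ref{T:cut irreducible}, which is exactly what is done in \cite{kedlaya-companions} and is the real content hidden in your first step.

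The irreducibility transfer is also not justified as stated. Chebotarev (or its $p$-adic surrogate, Theorem~\ref{T:chebotarev}) identifies semisimplifications from Frobenius data \emph{within} the $\ell$-adic or within the $p$-adic category; it cannot by itself match Jordan--H\"older constituents of $\calE$ with those of $\calE_\ell$, and the \'etale-to-crystalline direction of the correspondence on $X$ is not available to you. The standard repair is via weights: with $\det(\calE)$ of finite order, $\calE$ and hence $\calE_\ell$ are $\iota$-pure, and the dimension of the endomorphism algebra of the semisimplification on either side is read off from the order of the pole at $T = q^{-\dim X}$ of the $L$-function of the internal End, using the trace formula together with Weil II on both sides (Theorem~\ref{T:weil2} and \cite{deligne-weil2}); since the local factors agree, irreducibility of $\calE$ forces irreducibility of $\calE_\ell$. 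This is where the finite-determinant hypothesis genuinely enters (purity of irreducible objects), not merely as a normalization so that ``the Langlands machinery applies.'' Finally, in the transfer back along $C$, the Jordan--H\"older constituents of the restriction of $\calE$ need not have finite-order determinant, so before applying the curve-level correspondence to them you need the twisting d\'evissage of Lemma~\ref{L:finite order determinant} and Corollary~\ref{C:twist decomposition}. With these repairs your outline becomes a correct proof, but one genuinely different from the cited proof of Abe--Esnault.
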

\begin{proof}
See \cite[Theorem~0.3]{abe-esnault}.
\end{proof}

\begin{remark}
The proof of Theorem~\ref{T:cut irreducible} relies on the theory of weights
(\S\ref{sec:weights}) and the theory of companions (see \cite{kedlaya-companions}).
An alternate proof using these ingredients, but otherwise quite different in nature, will be given in \cite{kedlaya-companions}.
\end{remark}

\begin{remark}
It is possible for an object of $\FIsoc(X)$ to admit an overconvergent Frobenius structure with respect to
one particular lift of Frobenius without itself being an object of $\FIsoc^\dagger(X)$. For example,
it is possible to have $\calE \in \FIsoc^\dagger(X)$ with constant Newton polygon for which,
for a suitable choice of the Frobenius lift, the Frobenius action on $\calE$ induces
an overconvergent Frobenius structure on the steps of the slope filtration;
some explicit examples were found by Dwork \cite{dwork}.
\end{remark}

\section{Slope gaps}

We next study the behavior of gaps between slopes, starting with a cautionary remark.
\begin{remark}
Note that in general, a persistent gap between slopes is not enough to guarantee the existence of a slope filtration. That is, suppose that $\calE \in \FIsoc(X)$ has the property that for some positive integer $k < \rank(\calE)$, the $k$-th and $(k+1)$-st smallest slopes of $\calE$ at each point of $X$ are distinct.
Then $\calE$ need not admit a subobject of rank $k$ whose slopes at each point are precisely the $k$ smallest slopes of $\calE$ at that point. Namely, by Theorem~\ref{T:polygon variation},
this would imply that the sum of the $k$ smallest slopes is locally constant, which can fail in examples (see Example~\ref{exa:slope gap}).
However, this does hold if the gap is large enough; see Theorem~\ref{T:transversality}.
\end{remark}

\begin{example} \label{exa:slope gap} 
Let $Y$ be the moduli space of principally polarized abelian threefolds with full level $N$ structure for some $N \geq 3$ not divisible by $p$. Then the first crystalline cohomology of the universal abelian variety over $Y$
is an object $\calE$ of $\FIsoc^\dagger(Y)$ of rank 6. 
It is known (e.g., see \cite{chai-oort}) that the image of the slope polygon map for $\calE$ consists of all Newton polygons with nonnegative slopes and right endpoint $(6, 3)$.
In particular, we can find a curve $X$ in $Y$ such that the pullback of $\calE$ to $X$
has slopes $0,0,0,1,1,1$ at its generic point and $\frac{1}{3},\frac{1}{3},\frac{1}{3},\frac{2}{3},\frac{2}{3},\frac{2}{3}$ at some closed point. Since the smallest 3 slopes do not have constant sum, they cannot be isolated using a slope filtration.
\end{example}

Recall that there is a loose analogy between isocrystals and variations of Hodge structure.
With Griffiths transversality in mind, one may ask whether a persistent gap between slopes of length greater than 1 gives rise to a partial slope filtration. In fact, an even stronger statement holds: it is enough for such a gap to occur generically. 

\begin{theorem}[Drinfeld--Kedlaya] \label{T:transversality}
Suppose that $\calE \in \FIsoc(X)$ 
(resp.\ $\calE \in \FIsoc^\dagger(X)$)
has the property that for some positive integer $k$,
the difference between the $k$-th and $(k+1)$-st smallest slopes of $\calE$ at each generic point of $X$ is strictly greater than $1$.
\begin{enumerate}
\item[(a)]
At each $x \in X$, the sum of the $k$ smallest slopes of $\calE_x$ is equal to a locally constant value, and the difference between the $k$-th and $(k+1)$-st smallest slopes of $\calE$ is strictly greater than $1$.
\item[(b)]
There is a splitting $\calE \cong \calE_1 \oplus \calE_2$ of $\calE$ in $\FIsoc(X)$ 
(resp.\ $\FIsoc^\dagger(X)$) with $\rank(\calE_1) = k$ such that the slopes of $\calE_1$ at each point are exactly the $k$ smallest slopes of $\calE$ at that point.
\end{enumerate}
\end{theorem}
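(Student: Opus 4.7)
The plan is to prove (a) via a local analysis in the Robba ring setting, then deduce (b) from Theorem~\ref{T:filtration} together with a Frobenius-theoretic splitting argument exploiting the slope gap.

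For (a), Theorem~\ref{T:polygon variation}(a) already tells us that the slope polygon is upper semicontinuous with locally constant right endpoint, so at any specialization the polygon lies on or above the generic one with the same total. The nontrivial claim is that the $k$-th vertex remains at $(k, n)$ and the gap at position $k$ stays strictly greater than $1$. I would check this by restricting to curves through each closed point of $X$ (using Remark~\ref{R:affine cover} and Remark~\ref{R:pullback functoriality}) and completing, thereby reducing to a local statement: for $\calE \in \FIsoc(k \llbracket t \rrbracket)$ (resp.\ $\calE \in \FIsoc^\dagger(k((t)))$), a generic slope gap strictly greater than $1$ at position $k$ is inherited by the special polygon at the same vertex. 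The essential input is a Mazur/Griffiths-type inequality for integrable $F$-isocrystals: the derivation $\tfrac{d}{dt}$ can shift the slope filtration by at most $1$. I would formalize this by passing to $\FIsoc^\ddagger(k((t)))$, where a slope filtration exists (Remark~\ref{R:slope filtration Robba}), and comparing the generic and special polygons via the extended Robba ring. The main obstacle is making this quantitative slope-shift bound precise enough to exclude any jump that would collapse a gap of size strictly greater than $1$.

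Given (a), I would deduce (b) as follows. Since the vertex $(k, n)$ lies on the slope polygon at every point, Theorem~\ref{T:filtration} yields a short exact sequence $0 \to \calE_1 \to \calE \to \calE_2 \to 0$ in $\FIsoc(X)$ with $\rank \calE_1 = k$ in which $\calE_1$ picks up the $k$ smallest slopes pointwise. Applying the same construction to the dual $\calE^\dual$, whose slope polygon has the analogous gap at position $n - k$, yields a rank-$(n-k)$ quotient $\calE \twoheadrightarrow Q$ with the large slopes pointwise; pointwise Hom-vanishing between disjoint slope ranges \eqref{eq:pure hom} forces $\Hom(\calE_1, Q) = 0$, so $\calE_1 = \ker(\calE \to Q)$ and $Q \cong \calE_2$. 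To upgrade this two-sided exactness into a direct sum decomposition, I would use that all slopes of $\calE_2^\dual \otimes \calE_1$ are strictly less than $-1$ at every point: the resulting contractivity of $\sigma$ makes $1 - \sigma$ invertible on the appropriate Banach completion of sections, which kills the $\sigma$-equivariant obstruction to splitting and produces a canonical section $\calE_2 \to \calE$. For the overconvergent version, the same arguments go through once one verifies that $\calE_1$ actually lies in $\FIsoc^\dagger(X)$ and not merely in $\FIsoc(X)$, a feature that fails in Example~\ref{E:elliptic unit-root} (where the gap is exactly $1$) but is forced here by the same Robba-ring analysis used for (a), combined with Theorem~\ref{T:extend subobject}.
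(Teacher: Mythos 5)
Your proposal inverts the paper's logical order and, more seriously, leaves the crucial step in (b) unjustified.

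On the structure of the argument: the paper's variant proof (Lemma~\ref{L:transversality2}) does \emph{not} prove (a) first and then (b). Instead it proves the splitting (b) over an open dense affine subspace $U$ where the slope polygon is constant, extends the splitting from $U$ to $X$ by lifting the defining projector via the full faithfulness of Theorem~\ref{T:fully faithful1}, and only then \emph{deduces} (a) from the global splitting together with Theorem~\ref{T:polygon variation}: the summand ranks are fixed, the sum of slopes of each summand is locally constant (being the right endpoint of its polygon), and under specialization the largest slope of $\calE_1$ can only decrease while the smallest slope of $\calE_2$ can only increase. Your plan to establish (a) first by a ``Mazur/Griffiths-type'' bound is in the spirit of the other proof cited (Drinfeld--Kedlaya via the Cartier operator), but as you yourself note it is left at the level of a heuristic; the bound would need to be formulated and proved, and this is the hard content of \cite[Theorem~1.1.4]{drinfeld-kedlaya}.

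The more substantial gap is in (b). You observe that the slopes of $\calE_2^\dual \otimes \calE_1$ are $<-1$ and conclude from ``contractivity of $\sigma$'' that the extension splits. That argument only kills the obstruction in the category of $\sigma$-modules: it produces a $\sigma$-equivariant section, but you do not address whether the section is also horizontal for $\nabla$. Note that the Hom-vanishing \eqref{eq:pure hom} goes the wrong way here: the Kodaira--Spencer map attached to a candidate section is a map from the high-slope piece to the (twisted) low-slope piece, and such Homs do not automatically vanish (cf.\ Remark~\ref{R:isoclinic Robba}). Making the $\sigma$-splitting compatible with the connection is precisely the content of the paper's Lemma~\ref{L:transversality1}, which computes the $\Ext^1$ in $\FIsoc(k((t)))$ via a double complex in $\sigma-1$ and $d/dt$ and uses the gap $>1$ twice: once to make the geometric series defining $\bv$ converge in $\Gamma^{\perf(-s_1)}$, and once to make $d/dt$ well-defined on $\Gamma^{\perf(-s_1)}[p^{-1}]$ (which requires $-s_1 - 1 > 0$); it then uses a descent argument through $\Gamma^{\unr}$ and the decomposition by fractional powers of $t$ to force $\bv \in \calE_1$. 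None of this is captured by the ``Banach completion makes $1-\sigma$ invertible'' remark. Also, for the overconvergent case you propose to verify directly that $\calE_1$ is overconvergent; the paper's route is cleaner and does not require this: once the splitting idempotent exists in $\FIsoc(X)$, full faithfulness of $\FIsoc^\dagger(X) \to \FIsoc(X)$ lifts it to $\FIsoc^\dagger(X)$ automatically, with no appeal to Theorem~\ref{T:extend subobject} or to a Robba-ring analysis of overconvergence.
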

\begin{proof}
In light of Theorem~\ref{T:fully faithful1}, it is only necessary to prove Theorem~\ref{T:transversality} in the case $\calE \in \FIsoc(X)$. This is proved in \cite[Theorem~1.1.4]{drinfeld-kedlaya} using the Cartier operator;
see Lemma~\ref{L:transversality2} for a variant proof.
\end{proof}

\begin{remark}
Theorem~\ref{T:transversality} implies that if $X$ is irreducible and $\calE \in \FIsoc^\dagger(X)$ is indecomposable, then there is no gap of length greater than 1 between consecutive slopes of $\calE$ at the generic point of $X$. However, such gaps can occur at other points of $X$; see
\cite[Appendix]{drinfeld-kedlaya} for some examples.
\end{remark}

\begin{remark}
Theorem~\ref{T:transversality} can be used to obtain nontrivial consequences about the Newton polygons of Weil $\overline{\QQ}_{\ell}$-sheaves, refining results of V. Lafforgue \cite{lafforgue-hecke}. See \cite{drinfeld-kedlaya} for more discussion.
\end{remark}

\begin{remark}
In the overconvergent case, another approach to Theorem~\ref{T:transversality} has been given by Kramer-Miller
(in preparation). This avoids the dependence on the full faithfulness of restriction
(Theorem~\ref{T:fully faithful1})
but does introduce a dependence on Theorem~\ref{T:cut irreducible} which is not present in the approach of
\cite{drinfeld-kedlaya}.
\end{remark}

\section{Logarithmic compactifications}

As in other cohomology theories, a key technical tool in the study of overconvergent $F$-isocrystals on nonproper varieties is the formation of certain logarithmic compactifications.

\begin{defn} \label{D:log structure}
Suppose that $X \to \overline{X}$ is an open immersion with $\overline{X}$ smooth and $\overline{X} - X$ a normal crossings divisor. Let $\overline{X}_{\log}$ denote the scheme $\overline{X}$ equipped with the logarithmic structure defined by the divisor $\overline{X} - X$; 
one can then define the associated category $\FIsoc(\overline{X}_{\log})$ of convergent log-$F$-isocrystals \cite{shiho-crys1, shiho-crys2}.

To give a local description of this category, suppose that there exist a smooth affine formal scheme $P$ over $W(k)$ 
with $P_k \cong \overline{X}$, a relative normal crossings divisor $Z$ on $P$ with $Z_k \cong \overline{X} - X$, and a Frobenius lift $\sigma: P \to P$ which acts on $Z$.
Then an object of $\FIsoc(\overline{X}_{\log})$ may be viewed as a vector bundle $\calE$ on $P_K$ equipped with an integrable logarithmic connection (for the logarithmic structure defined by $Z_K$) and an isomorphism $\sigma^* \calE \to \calE$ of logarithmic $\mathcal{D}$-modules.
\end{defn}

\begin{defn}
Given an integrable logarithmic connection, the resulting map
$\calE \to \calE \otimes_{\calO_{P_K}} \Omega^{\log}_{P_K/K}/\Omega_{P_K/K}$
induces an $\calO_{Z_K}$-linear endomorphism of $\calE|_{Z_K}$ called the \emph{residue map}. The eigenvalues of the residue map must be killed by differentiation, and thus belong to $K$; the presence of the Frobenius structure forces the set of eigenvalues to be stable under multiplication by $p$. That is, any object of $\FIsoc(\overline{X}_{\log})$ has nilpotent residue map. Note that this would fail if we only required $\sigma^* \calE \to \calE$ to be an isomorphism away from $Z_K$; in this case, only the reductions modulo $\ZZ$ of the eigenvalues of the residue map would form a set stable under multiplication by $p$, so they would only be constrained to be rational numbers.
\end{defn}

\begin{theorem}[Kedlaya]
The functor $\FIsoc(\overline{X}_{\log}) \to \FIsoc(X, \overline{X})$ is fully faithful.
In particular, if $\overline{X}$ is proper, then $\FIsoc(\overline{X}_{\log}) \to \FIsoc^\dagger(X)$  is fully faithful.
\end{theorem}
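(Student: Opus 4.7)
By replacing $(\calE_1, \calE_2)$ with $\calH := \calE_1^\dual \otimes \calE_2$, which is again an object of $\FIsoc(\overline{X})$ since nilpotent residues and Frobenius structures pass through duals and tensor products, the claim reduces to showing that for each $\calE \in \FIsoc(\overline{X})$, the restriction from log-horizontal $\sigma$-invariant sections of $\calE$ on $P_K$ to horizontal $\sigma$-invariant sections of $\calE$ on some strict neighborhood of the tube of $X$ (modulo equivalence under shrinking) is bijective. Faithfulness is then immediate from the nonarchimedean Hartogs principle recorded in Remark~\ref{R:purity of stratification}: in any local trivialization, a log-horizontal section consists of global analytic functions on $P_K$, and the restriction $H^0(P_K, \calO) \to H^0(V, \calO)$ to any strict neighborhood $V$ is injective, so a log-horizontal section that vanishes on $V$ vanishes on $P_K$.

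For fullness, I would work \'etale-locally on $\overline{X}$ (using that both categories are stacks for the \'etale topology, as in Definition~\ref{D:log structure} and \cite{lestum}) to reduce to the model situation where $P$ carries coordinates $t_1, \ldots, t_n$ with $\overline{X} - X$ cut out by $t_1 \cdots t_r = 0$ and with Frobenius lift $\sigma(t_i) = t_i^p$. Given a horizontal $\sigma$-invariant section $s$ of $\calE$ on the strict neighborhood $V_\lambda = \{|t_1|, \ldots, |t_r| > \lambda\}$ for some $\lambda < 1$, the task is to extend $s$ log-horizontally across each component $\{t_i = 0\}$ and then across their crossings. Near the generic point of a single component $\{t_i = 0\}$, nilpotency of the residue forces the formal asymptotic expansion of $s$ in $t_i$ to be a polynomial in $\log t_i$ with coefficients in the ring of formal Laurent series in $t_i$; the $\sigma$-equivariance of $s$, combined with the identities $\sigma^*(t_i) = t_i^p$ and $\sigma^*(\log t_i) = p \log t_i$ and the fact that eigenvalues of $F$ on $\calE|_{Z_K}$ are merely scaled by integer powers of $p$ under Frobenius iteration, kills both the strictly positive powers of $\log t_i$ and the strictly negative powers of $t_i$, yielding an honest power-series extension of $s$ across that component.

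The main obstacle is this final step: converting the formal-expansion analysis into a genuine rigid-analytic extension of $s$ across a codimension-one stratum of $Z_K$. This is essentially a log-pole, several-variable refinement of de Jong's full-faithfulness theorem $\FIsoc(k \llbracket t \rrbracket) \to \FIsoc^\dagger(k((t)))$ quoted in Remark~\ref{R:fully faithful local}; I would carry it out either by formally completing $P$ along each component $\{t_i = 0\}$ and invoking that theorem with parameters (transverse coordinates serving as spectators), or equivalently by a direct Dwork-style Frobenius argument bounding the growth of $s$ at the boundary. Once extensions across every codimension-one stratum are in hand, higher-codimension crossings are filled in by another application of the Hartogs step used in the faithfulness argument, and the resulting global section is automatically log-horizontal and $\sigma$-invariant by density.
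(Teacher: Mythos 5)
Your reduction to global horizontal $\sigma$-invariant sections via $\calH := \calE_1^\dual \otimes \calE_2$ is sound, and the faithfulness half is correct in substance, though the principle you actually need there is not the codimension-two Hartogs statement of Remark~\ref{R:purity of stratification} but the elementary identity theorem: restriction from $H^0(P_K, \calO)$ to any strict neighborhood is injective simply because the strict neighborhood meets every connected component in a nonempty admissible open of the same dimension.

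The fullness half has two genuine gaps. First, the mechanism you propose for killing positive powers of $\log t_i$ and negative powers of $t_i$ is not correct as stated. Nilpotence of the residue is what supplies non-resonance, so that a horizontal section has a unique formal power-series expansion across each boundary component once it is known not to have a pole; but the statement that ``eigenvalues of $F$ on $\calE|_{Z_K}$ are merely scaled by integer powers of $p$'' does not bear on this. Those Frobenius eigenvalues are spectral data on the restriction to the divisor and do not control the Laurent coefficients of $s$. The actual input must be a growth estimate on the coefficients $s_k$ of the Laurent expansion $s = \sum_k s_k t_i^k$ on the overconvergent annulus, obtained by comparing $s$ with $\sigma^* s$ (which contracts the annulus and reindexes powers by $k \mapsto pk$) and iterating. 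This Dwork-type argument is the whole content of the theorem, and you explicitly flag it as ``the main obstacle'' without executing it, which is precisely the step one cannot skip. Second, the fallback of citing de Jong's full faithfulness $\FIsoc(k\llbracket t\rrbracket) \to \FIsoc^\dagger(k((t)))$ ``with parameters'' is not a valid reduction: de Jong's theorem concerns extension across a point with no log structure, whereas here you need extension to a log-horizontal section, where the relevant local statement is genuinely different (it is where nilpotence of residues and the unipotence calculus enter, not a theorem about $F$-crystals over $k\llbracket t\rrbracket$). The proof cited in the paper, \cite[Theorem~6.4.5]{kedlaya-semi1}, proceeds through the machinery of unipotence and Taylor isomorphisms developed in that reference, not through a parametrized version of de Jong; so even granting the convergence step, your proposed route is not a shortcut back to known one-variable results.
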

\begin{proof}
See \cite[Theorem~6.4.5]{kedlaya-semi1}.
\end{proof}

Theorem~\ref{T:cut by curves} admits the following logarithmic analogue.
\begin{theorem}[Shiho] \label{T:cut by curves log}
An object of $\FIsoc^\dagger(X)$ extends to $\FIsoc(\overline{X}_{\log})$ if and only if
for every curve $C \subseteq \overline{X}$, the pullback object in $\FIsoc^\dagger(C \times_{\overline{X}} X)$ extends to $\FIsoc(C_{\log})$ (where the logarithmic structure on $C$ is the one pulled back from $\overline{X}$).
\end{theorem}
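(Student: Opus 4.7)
The \emph{only if} direction is immediate from functoriality. Given $\overline{\calE} \in \FIsoc(\overline{X})$ restricting to $\calE$, pulling back along any curve $C \subseteq \overline{X}$ (with the log structure induced from $\overline{X}$) produces an object of $\FIsoc(C)$ whose further restriction to $C \times_{\overline{X}} X$ is precisely the pullback of $\calE$ to $\FIsoc^\dagger(C \times_{\overline{X}} X)$.

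For the \emph{if} direction, the plan is to run a logarithmic variant of the argument of Theorem~\ref{T:cut by curves}. The preceding theorem establishes full faithfulness of $\FIsoc(\overline{X}) \to \FIsoc^\dagger(X)$, so an extension of $\calE$ is unique whenever it exists; this makes the existence question Zariski-local on $\overline{X}$. I would therefore fix a smooth affine open $V \subseteq \overline{X}$ and choose, as in Definition~\ref{D:log structure}, a smooth affine formal lift $P$ of $V$ over $W(k)$ together with a relative normal crossings divisor $Z \subset P$ lifting $(\overline{X} - X) \cap V$ and a Frobenius lift $\sigma: P \to P$ preserving $Z$. The given $\calE$ already provides a vector bundle with integrable connection and Frobenius structure on some strict neighborhood of the tube of $X \cap V$ in $P_K$; the task reduces to showing that this bundle extends to a vector bundle on $P_K$ equipped with an integrable logarithmic connection along $Z_K$ whose residues are nilpotent.

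The crucial input is a logarithmic analogue of \cite[Theorem~0.1]{shiho-cut-extend}: extendability of a Frobenius-compatible convergent connection from the complement of an NCD to a log-convergent one across the divisor can be certified by testing on a sufficiently rich family of curves meeting the divisor transversally. For each such test curve $C \subseteq V$, the hypothesis supplies an extension of $\calE|_{C \times_{\overline{X}} X}$ to $\FIsoc(C)$, which concretely means a bundle with log-connection along the finitely many boundary points whose connection matrix has at worst simple poles and whose residues are constrained by the Frobenius structure (by the argument recorded after Definition~\ref{D:log structure}, the eigenvalues must be stable under multiplication by $p$ and hence nilpotent). A rigid-analytic Hartogs-type estimate, of the kind underlying Shiho's curve-cutting machinery, then assembles these one-parameter extensions into a global log-extension on $P_K$, and the resulting local extensions glue via full faithfulness.

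The principal obstacle I expect is setting up the logarithmic version of the cut-by-curves criterion itself. Shiho's original theorem controls extension from $\FIsoc(U,Y)$ to $\FIsoc(X,Y)$, where the boundary estimates are overconvergence estimates, whereas here the extensions being glued are log-convergent, with finite pole order and nilpotent residues. The technical core is verifying that the stricter residue-nilpotence condition, known on every transversal curve through each component of $Z$, descends to the ambient formal scheme; as noted, this is guaranteed by the Frobenius compatibility, but carrying it out in families requires the same kind of careful control of Taylor expansions along a divisor that appears in \cite{shiho-cut-extend} and \cite{kedlaya-semi1}. Once this logarithmic criterion is in place, the remaining steps are essentially formal: invoke it locally on $\overline{X}$, then glue via the full faithfulness of $\FIsoc(\overline{X}) \to \FIsoc^\dagger(X)$.
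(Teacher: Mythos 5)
Your overall plan points in the right direction, and the \emph{only if} direction is indeed immediate by functoriality. However, the gap you flag as the ``principal obstacle'' does not exist: you have misread the content of the reference you yourself invoke. You characterize \cite[Theorem~0.1]{shiho-cut-extend} as controlling extension from $\FIsoc(U,Y)$ to $\FIsoc(X,Y)$ (the non-logarithmic setting) and assert that a logarithmic analogue, with control of pole order and residue nilpotence, remains to be established. In fact the title of \cite{shiho-cut-extend} is ``Cut-by-curves criterion for the \emph{log} extendability of overconvergent isocrystals,'' and its Theorem~0.1 \emph{is} precisely the logarithmic cut-by-curves criterion you describe wanting to manufacture: extendability of an overconvergent $F$-isocrystal across a normal crossings boundary to a convergent log-$F$-isocrystal (with nilpotent residues) is detected by pullback to curves. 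The statement at hand follows from that theorem by direct citation, which is exactly how the paper treats it (the same reference also supplies Theorem~\ref{T:cut by curves}). The technical machinery you sketch — Hartogs-type estimates, control of Taylor expansions and residues along the divisor in families, assembly of one-parameter extensions into a global log extension via the formal lift $(P,Z,\sigma)$ — is the content of Shiho's proof, not a remaining obstacle. Your reduction steps (uniqueness from full faithfulness, Zariski-locality) are harmless but superfluous once the reference is read correctly, since Shiho's theorem is already stated globally.
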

\begin{proof}
As in the proof of Theorem~\ref{T:cut by curves}, this ultimately follows from the proof of \cite[Theorem~0.1]{shiho-cut-extend}, taking the subset $\Sigma$ of $\ZZ_p^s$ therein to be identically zero. In that case, the condition of ``$\Sigma$-unipotent monodromy'' in \cite[Theorem~0.1]{shiho-cut-extend} corresponds to log-extendability
as per \cite[Proposition~6.3.2]{kedlaya-semi1}. One can recover Theorem~\ref{T:cut by curves} from this by noting that extendability in $\FIsoc(X, Y)$ is equivalent to
log-extendability plus vanishing of the residue along each boundary divisor (see \cite[Theorem~5.2.1]{kedlaya-semi1}), and the latter can be detected on any \emph{single} curve meeting that divisor.
\end{proof}

\begin{remark}
In light of Theorem~\ref{T:cut by curves log},  Conjecture~\ref{conj:cut over} would imply that
an object of $\FIsoc(X)$ extends to $\FIsoc(\overline{X}_{\log})$ if and only if
for every curve $C \subseteq \overline{X}$, the pullback object in $\FIsoc(C \times_{\overline{X}} X)$ extends to $\FIsoc(C_{\log})$.
\end{remark}

In general, not every object of $\FIsoc^\dagger(X)$ extends to $\FIsoc(\overline{X}_{\log})$. However, the obstruction to extending can always be eliminated using a finite cover of varieties. Note that the unit-root case of the following theorem is an immediate consequence of Theorem~\ref{T:unit root2}.

\begin{theorem}[Kedlaya] \label{T:semistable}
Given $\calE \in \FIsoc^\dagger(X)$, there exist an alteration $f: X' \to X$
in the sense of de Jong \cite{dejong-alterations}
and an open immersion $j: X' \to \overline{X}'$ with $\overline{X}'$ smooth proper
and $\overline{X}' - X'$ a normal crossings divisor, such that the pullback of $\calE$
to $\FIsoc^\dagger(X')$ extends to $\FIsoc(\overline{X}'_{\log})$.
\end{theorem}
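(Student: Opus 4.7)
The plan is to build on the $p$-adic local monodromy theorem (Remark~\ref{R:Crew1}) and globalize it using de Jong's alteration theorem. First, I would apply de Jong's theorem to reduce to the case where $X$ already admits an open immersion $j: X \to \overline{X}$ with $\overline{X}$ smooth proper and $D := \overline{X} - X$ a strict normal crossings divisor. The question then becomes whether, after a further alteration compatible with the compactification, $\calE$ extends across $D$ as a log-$F$-isocrystal; this is natural because logarithmic compactifications are the correct home for ``tame'' boundary behavior, as discussed in Definition~\ref{D:log structure}.

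Next, at the generic point $\eta$ of each irreducible component of $D$, the restriction of $\calE$ to a punctured formal neighborhood defines an object of a relative version of $\FIsoc^\ddagger(k((t)))$, where $k$ is replaced by (a lift of) the residue field $\kappa(\eta)$. By Remark~\ref{R:Crew1}, after a finite separable extension of this local field the object becomes a successive extension of objects pulled back from the residue field, and a further twist by a tame character makes the local monodromy unipotent. An object with unipotent monodromy along $\eta$ admits a log extension across the corresponding formal neighborhood. The crucial step is to realize these pointwise cover prescriptions by a single global alteration $f: X' \to X$ together with a smooth compactification $\overline{X}' \supset X'$ with NCD boundary, such that $f^*\calE$ has unipotent monodromy at the generic point of every component of $\overline{X}' - X'$.

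Once $f^*\calE$ has unipotent local monodromy along every boundary component of $\overline{X}'$, I would assemble a log-$F$-isocrystal on $\overline{X}'$ extending $f^*\calE$. To bootstrap from the pointwise unipotence to a genuine global log extension, I would invoke Theorem~\ref{T:cut by curves log}: it suffices to check, for every curve $C \subseteq \overline{X}'$, that the pullback of $f^*\calE$ extends from $\FIsoc^\dagger(C \cap X')$ to $\FIsoc(C)$. On curves this is exactly the content of Crew's conjecture (Remark~\ref{R:Crew1}) combined with the unipotence established above, so Shiho's cut-by-curves criterion applies and produces the desired global extension.

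The principal obstacle is the globalization step in the middle paragraph: Remark~\ref{R:Crew1} is a pointwise codimension-one statement, and producing a single alteration that simultaneously tames every boundary component --- while preserving smoothness and strict normal crossings throughout the process --- is the heart of the matter. I expect this to be handled by an induction on $\dim X$, combined with iterated, carefully targeted applications of de Jong's alteration theorem to prescribed exceptional divisors, so that at each stage previously achieved tameness along already-treated components is preserved. Controlling the interaction between different irreducible components of the evolving boundary divisor (especially along their intersections, where log structure combinatorics enter) is the most delicate aspect, and is where serious new technical work beyond Crew's conjecture is required.
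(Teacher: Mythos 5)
Your strategy works when $\dim X = 1$ (there it is essentially the observation recorded in Remark~\ref{R:Crew conjecture}), but in higher dimension the step you defer to the end --- producing one alteration that simultaneously tames every boundary divisor --- is precisely where this naive globalization breaks down, and the paper's source for the theorem does something genuinely different. The finite extensions $k'((u))/\kappa(\eta)((t))$ prescribed by Crew's conjecture are in general \emph{wildly} ramified; any alteration realizing them is forced to be wildly ramified along the boundary, its compactification acquires singularities that you must resolve by further blowups and alterations, and the resolution creates new boundary components along which the pulled-back isocrystal can again fail to be quasi-unipotent-with-unipotent-part, with no evident numerical invariant that improves. So ``iterated, carefully targeted applications of de Jong's theorem'' does not close the induction; as the paper notes explicitly, one cannot argue via Theorem~\ref{T:purity} plus the one-dimensional case. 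The actual proof (the sequence \cite{kedlaya-semi1, kedlaya-semi2, kedlaya-semi3, kedlaya-semi4}) instead proves \emph{local} semistable reduction valuation by valuation on the Riemann--Zariski space of $X$ --- first at divisorial, then monomial, then arbitrary nonmonomial valuations, by induction on dimension and on auxiliary invariants --- and then glues finitely many local alterations using quasicompactness of the Riemann--Zariski space. That valuation-theoretic mechanism is the content of the theorem and is absent from your outline.

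There is also a gap in your concluding appeal to Theorem~\ref{T:cut by curves log}. That criterion requires, for \emph{every} curve $C \subseteq \overline{X}'$, that the pullback extend to $\FIsoc(C)$ with its log structure, i.e.\ that the local monodromy be unipotent at every boundary point of $C$, with no further cover of $C$ permitted. Crew's conjecture gives only quasi-unipotence after a finite cover of the curve, and the unipotence you have arranged lives only at the generic points of the boundary divisors of $\overline{X}'$; a curve passing through a deeper boundary stratum (or through a point where the earlier alteration was wildly ramified) is not controlled by that generic statement. Passing from generic unipotence along the boundary divisors to an actual log extension is itself the main theorem of \cite{kedlaya-semi1} (``unipotence and logarithmic extensions''), not something the cut-by-curves criterion lets you bypass; as written, your verification of the criterion's hypothesis presupposes essentially the statement being proved.
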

\begin{proof}
For the case $\dim X = 1$, see \cite[Theorem~1.1]{kedlaya-semi-curve}.
For the general case, see \cite[Theorem~5.0.1]{kedlaya-semi4}.
\end{proof}

\begin{remark} \label{R:Crew conjecture}
The local model of Theorem~\ref{T:semistable} is the following statement: for any $\calE \in \FIsoc^\dagger(k((t)))$, there exists a finite \'etale morphism $\Spec k'((u)) \to \Spec k((t))$ such that the pullback of
$\calE$ to $\FIsoc^\dagger(k'((u)))$ extends to the category
$\FIsoc(k \llbracket u\rrbracket_{\log})$ of finite projective $W(k') \llbracket u \rrbracket[p^{-1}]$-modules
equipped with compatible actions of the Frobenius lift $u \mapsto u^p$ and the derivation $u \frac{d}{du}$.
This was stated formally by de Jong \cite[\S 5]{dejong-icm},
but was known to Crew to be a special case of his conjecture formulated in \cite[\S 10.1]{crew-finite};
more precisely, $\calE \in \FIsoc^\dagger(k((t)))$
lifts to $\FIsoc(k \llbracket t \rrbracket_{\log})$ if and only if its image in
$\FIsoc^\ddagger(k((t)))$ is a successive extension of objects, each of which arises by pullback from $\FIsoc(k)$.
In light of Remark~\ref{R:Crew1}, the resolution of Crew's conjecture thus yields the statement in question.
\end{remark}

\begin{remark}
In the case $\dim X = 1$, Theorem~\ref{T:semistable} is an easy consequence of the local model statement described in Remark~\ref{R:Crew conjecture}. In the wake of de Jong establishing his alterations theorem as a weak replacement for resolution of singularities in positive characteristic (Remark~\ref{R:alterations}), the general statement of Theorem~\ref{T:semistable} was formulated as a natural higher-dimensional analogue of the one-dimensional case; it first appeared (in a sentence of the form ``One can ask...'') in \cite[\S 5]{dejong-icm} and was formally conjectured by Shiho \cite[Conjecture~3.1.8]{shiho-crys2}.

The proof of Theorem~\ref{T:semistable} is the culmination of the sequence of papers
\cite{kedlaya-semi1, kedlaya-semi2, kedlaya-semi3, kedlaya-semi4}
(plus \cite[Appendix]{kedlaya-connections} for a crucial erratum to \cite{kedlaya-semi4})
where it is described as a \emph{semistable reduction theorem} for overconvergent $F$-isocrystals
(based on Crew's usage of the term \emph{semi-stable} in the one-dimensional case \cite[\S 10.1]{crew-finite}). 
The principal difficulty in the higher-dimensional case is that the alteration is generally forced to include some wildly ramified cover, whose singularities are hard to control; consequently, one cannot simply argue using Theorem~\ref{T:purity} and the one-dimensional case. Rather, one must work locally on the Riemann--Zariski space of the variety. Similar difficulties arise in trying to formulate a higher-dimensional analogue of the formal classification of meromorphic differential equations; see
\cite{kedlaya-goodformal1, kedlaya-goodformal2}.
\end{remark}

\begin{remark} \label{R:alterations}
Note that de Jong's alteration theorem is required even to produce the pair $X', \overline{X}'$ with the prescribed smoothness properties; the nature of de Jong's proof is such that one has very little control over the finite locus of the alteration. One might hope that under a strong hypothesis on resolution of singularities, Theorem~\ref{T:semistable} can be strengthened to ensure that the alteration $f$ is finite \'etale over $X$. This can be achieved when $\dim X = 1$: it is enough to ensure that $f$
trivializes the local monodromy representations (Remark~\ref{R:local mono rep}), which can be achieved via careful use of Katz--Gabber local-to-global extensions \cite{katz-local-to-global}. It is less clear whether one should even expect this to be possible when $\dim X > 1$, as there is in general no \emph{global} monodromy representation controlling the situation (compare Remark~\ref{R:local mono rep}).
However, using the theory of companions, modulo resolution of singularities this can be established when $k$ is finite \cite[Remark~4.4]{abe-esnault}: there exists a finite \'etale cover of $X$ which trivializes an $\ell$-adic companion modulo $\ell$ (for some prime $\ell \neq p$), and any alteration that factors through this cover suffices to achieve semistable reduction.
\end{remark}

\section{Cohomology}

Having studied the coefficient objects of rigid cohomology up to now, it is finally time to introduce the cohomology theory itself. Again, we fall back on \cite{lestum} for missing foundational discussion.

\begin{defn}
For $i \geq 0$ and $\calE \in \FIsoc^\dagger(X)$, 
let $H^i_{\rig}(X, \calE)$ denote the $i$-th \emph{rigid cohomology} group of $X$ with coefficients in $\calE$; it is a $K$-vector space equipped with an isomorphism with its $\varphi$-pullback.

One may describe rigid cohomology concretely in case $X$ is affine.
Let $P$ be a smooth affine formal scheme with $P_k \cong X$; then $\calE$ can be realized as a vector bundle with integrable connection on a strict neighborhood $U$ of $P_K$ in a suitable ambient space.
The rigid cohomology is then obtained by taking the hypercohomology of the de Rham complex
\[
0 \to \calE \stackrel{\nabla}{\to} \calE \otimes_{\calO_{U}} \Omega^1_{U/K}
\stackrel{\nabla}{\to} \calE \otimes_{\calO_{U}} \Omega^2_{U/K} \to \cdots,
\]
then taking the direct limit over (decreasing) strict neighborhoods.
For example, if $X = \AAA^n_k$, we may take $P$ to be the formal affine $n$-space, identify $P_K$ with the closed unit polydisc in $T_1,\dots,T_n$, then take the family of strict neighborhoods to be polydiscs of radii strictly greater than 1.
\end{defn}

\begin{remark}
For constant coefficients, the computation of rigid cohomology in the affine case agrees with the definition of ``formal cohomology'' by Monsky--Washnitzer \cite{monsky-washnitzer}, which was one of Berthelot's motivations for the definition of rigid cohomology. The key example is that of the affine line with constant coefficients: the de Rham complex over the closed unit disc has infinite-dimensional cohomology, whereas 
rigid cohomology behaves as one would expect from the Poincar\'e lemma (i.e., $H^0$ is one-dimensional and $H^1$ vanishes).
\end{remark}

\begin{theorem}[Ogus] \label{T:cohomology comparison}
Suppose that $X$ is smooth and proper, and let $\calE$ be the object of $\FIsoc(X) = \FIsoc^\dagger(X)$ corresponding to a crystal $M$ of finite $\calO_{X,\crys}$-modules via 
Theorem~\ref{T:isogeny category}. Then there are canonical isomorphisms
\[
H^i(X_{\crys}, M) \otimes_{\ZZ} \QQ \cong H^i_{\rig}(X, \calE) \qquad (i \geq 0).
\]
\end{theorem}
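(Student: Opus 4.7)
The plan is to identify both sides with the de Rham hypercohomology of a common complex living on a smooth formal-scheme lift of $X$. First I would reduce to the local situation of Definition~\ref{D:convergent realization}: cover $X$ by affine opens admitting smooth affine formal lifts $P/W(k)$ equipped with Frobenius lifts $\sigma$. Since a global smooth formal lift of $X$ rarely exists, the local de Rham complexes have to be assembled via a \v{C}ech-type bicomplex, or equivalently one can work throughout within the functorial site-theoretic framework of \cite{lestum}.

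On such a local chart, the equivalence underlying Theorem~\ref{T:isogeny category} realizes the crystal $M$ as a coherent $\calO_P$-module $\widehat{M}$ equipped with a topologically quasi-nilpotent integrable connection $\nabla$, and identifies $\calE$ with $\widehat{M}[p^{-1}]$ pulled back to the Raynaud generic fiber $P_K$ (carrying the induced connection). By the classical PD-linearization argument, crystalline cohomology computes as $\mathbb{H}^i(P, \widehat{M} \otimes_{\calO_P} \Omega^\bullet_{P/W(k)})$, and tensoring with $\QQ$ simply inverts $p$ on each term of this complex.

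On the rigid side, since $X$ is proper we have $\FIsoc^\dagger(X)=\FIsoc(X)$, and the tube of $X$ in $P_K$ is all of $P_K$, so no passage to strict neighborhoods is required; thus $H^i_{\rig}(X,\calE) = \mathbb{H}^i(P_K, \calE \otimes_{\calO_{P_K}} \Omega^\bullet_{P_K/K})$. The comparison then reduces, term by term in the de Rham complex, to the GAGA-type identity
\[
\mathbb{H}^i(P,\calF) \otimes_{W(k)} K \;\cong\; \mathbb{H}^i(P_K, \calF_K)
\]
for a coherent $\calO_P$-module $\calF$ on a proper formal $W(k)$-scheme, which follows from Kiehl's theorem combined with the Raynaud generic-fiber formalism; passing through the hypercohomology spectral sequence then delivers the desired isomorphism.

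The main obstacle is the global assembly when no global smooth formal lift exists. One has to choose local Frobenius lifts and, on overlaps, use the Taylor isomorphisms encoded in the underlying $\mathcal{D}$-module structure to canonically identify different local realizations of $\widehat{M}$ and of $\calE$; this is precisely the functoriality that makes the crystal-to-isocrystal functor of Theorem~\ref{T:isogeny category} well-defined in the first place. Verifying that both the crystalline and the rigid hypercohomologies respect this gluing data, and that the term-by-term GAGA isomorphism is compatible with the resulting \v{C}ech coboundaries, is the technical heart of the argument and is where a functorial site-theoretic presentation (as in \cite{lestum}) offers the cleanest route.
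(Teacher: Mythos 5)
The paper's ``proof'' is just a pointer to \cite[Theorem~0.0.1]{ogus-topos}, so there is no explicit argument in the source to compare against. Your plan is the standard route to such a comparison and is essentially sound, but Ogus's actual proof is organized differently: he constructs a \emph{convergent site/topos} and compares it to the crystalline topos by general functoriality of topoi, so the gluing over overlapping affine lifts (the step you flag as ``the technical heart'') is absorbed into the site-theoretic formalism rather than made explicit as a \v{C}ech bicomplex. The two presentations encode the same local data, so the difference is one of packaging; your \v{C}ech version makes the local-to-global passage concrete, while the topos version makes functoriality and Frobenius-compatibility automatic.

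One internal inconsistency in your plan should be corrected. You reduce to a cover of $X$ by affines $U_i$ with smooth \emph{affine} formal lifts $P_i$, yet you then invoke a GAGA-type identity $\mathbb{H}^i(P,\calF) \otimes_{W(k)} K \cong \mathbb{H}^i(P_K,\calF_K)$ ``for a coherent $\calO_P$-module on a \emph{proper} formal $W(k)$-scheme.'' That is not the statement your \v{C}ech argument calls for. Once you have reduced to the bicomplex, you need the comparison only for the affine pieces, where it is essentially immediate: writing $P_i = \Spf A_i$ with $A_i$ topologically of finite type over $W(k)$, a coherent $\calO_{P_i}$-module is a finite $A_i$-module $F$, its higher cohomology vanishes (affine formal scheme), the higher cohomology of $\calF_K$ on the affinoid $P_{i,K}$ vanishes (Tate/Kiehl), and $H^0(P_i,\calF) \otimes_{W(k)} K = F[p^{-1}] = H^0(P_{i,K},\calF_K)$. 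Properness of $X$ enters elsewhere: it is what gives $\FIsoc^\dagger(X) = \FIsoc(X)$ and lets you work on the full tube $P_{i,K}$ without strict neighborhoods, and it is what makes the resulting cohomology finite-dimensional; it is not what makes the sheaf-level comparison go through. With that repair your plan is consistent, and the remaining work --- checking that the \v{C}ech coboundaries built from Taylor isomorphisms agree on the two sides --- is exactly the cocycle condition that makes the crystal-to-isocrystal functor of Theorem~\ref{T:isogeny category} well defined, as you say.
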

\begin{proof}
See \cite[Theorem~0.0.1]{ogus-topos}.
\end{proof}

\begin{theorem}[Kedlaya] \label{T:finite dimensional}
For $\calE \in \FIsoc^\dagger(X)$, the $K$-vector spaces $H^i_{\rig}(X, \calE)$ are finite-dimensional for all $i \geq 0$ and zero for all $i>2\dim X$.
\end{theorem}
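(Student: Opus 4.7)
The plan is to reduce to the case where $\calE$ extends to a log convergent $F$-isocrystal on a smooth proper log compactification (via the semistable reduction theorem), establish finite-dimensionality there by Kiehl's coherence theorem, and then descend back to $X$ via proper cohomological descent; the vanishing bound is handled separately by a direct de Rham complex argument.

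For the first step, invoke Theorem~\ref{T:semistable} to produce a proper alteration $f\colon X' \to X$ and an open immersion $j\colon X' \hookrightarrow \overline{X}'$, with $\overline{X}'$ smooth proper and $\overline{X}' \setminus X'$ a strict normal crossings divisor, such that $f^*\calE$ extends to a log convergent $F$-isocrystal $\overline{\calE} \in \FIsoc(\overline{X}')$. Lift $\overline{X}'$ Zariski-locally to smooth proper formal schemes over $W(k)$ compatibly with the normal crossings divisor and a Frobenius, and use the logarithmic extension of the Ogus comparison (Theorem~\ref{T:cohomology comparison}) to express $H^i_{\rig}(X', f^*\calE)$ as the hypercohomology of the bounded log de Rham complex associated to $\overline{\calE}$ on the resulting proper rigid analytic space. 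Since that complex has coherent terms and sits on a proper rigid space, Kiehl's finiteness theorem yields finite-dimensionality over $K$.

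To descend from $X'$ back to $X$, I would appeal to proper cohomological descent for rigid cohomology, as developed by Tsuzuki and Chiarellotto-Tsuzuki: associated to the proper surjection $f$ one obtains a proper simplicial hypercover $X'_\bullet \to X$ and a convergent spectral sequence
\[
E_1^{p,q} = H^q_{\rig}(X'_p, \calE'_p) \Rightarrow H^{p+q}_{\rig}(X, \calE),
\]
where $\calE'_p$ denotes the pullback of $\calE$ to $X'_p$. Applying semistable reduction iteratively on the pieces $X'_p$ (which are again $k$-varieties) reduces each $E_1^{p,q}$ to the log-proper case treated above; together with the vanishing bound below, the spectral sequence has bounded support and its abutment is finite-dimensional.

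For the vanishing statement, cover $X$ by finitely many affine opens $U_\alpha$ of dimension at most $d = \dim X$. On each piece, $H^i_{\rig}(U_\alpha, \calE|_{U_\alpha})$ is computed by a de Rham complex of length $d$ on a strict neighborhood, hence vanishes for $i > d$; the \v{C}ech-to-derived-functor spectral sequence for the cover then forces $H^i_{\rig}(X, \calE) = 0$ for $i > 2d$. The main technical obstacle is the proper cohomological descent step: because the alteration $f$ is generically \'etale but globally ramified, the descent cannot be obtained as a formal consequence of general nonsense and requires substantial analytic work to set up compatible geometric realizations along the hypercover and to control the behavior of rigid cohomology under the resulting non-smooth base changes.
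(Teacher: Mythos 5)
You take the alternative route mentioned in the paper's proof: semistable reduction (Theorem~\ref{T:semistable}) plus the logarithmic extension of the Ogus comparison (cited from \cite{shiho-log}), with proper cohomological descent (Tsuzuki, Chiarellotto--Tsuzuki) to return from the alteration to $X$. The paper's primary reference \cite[Theorem~1.2.1]{kedlaya-finiteness} proves the statement by a logically independent method: that paper predates the general semistable reduction theorem, and instead proceeds by induction on dimension using Gysin pushforwards, excision, and a Poincar\'e-duality-type pairing, with semistable reduction available only for curves (\cite{kedlaya-semi-curve}). Your route is cleaner to state and exhibits the structural reduction to the proper log-smooth case more transparently, but it rests on two heavy external inputs (the full semistable reduction theorem \cite{kedlaya-semi4} and proper cohomological descent) whose combined proofs are considerably longer than the argument in \cite{kedlaya-finiteness}. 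You correctly flag proper cohomological descent as the serious technical obstacle, which is honest self-assessment rather than a gap.

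Two points of imprecision you should tighten. First, Theorem~\ref{T:cohomology comparison} (even in its logarithmic form) identifies log convergent cohomology of $\overline{X}'$ with log crystalline cohomology, both proper-side objects; to conclude finiteness of $H^i_{\rig}(X', f^*\calE)$ you also need the comparison between log convergent cohomology on $\overline{X}'$ and rigid cohomology of the open piece $X'$ with overconvergent coefficients, which is the genuine content of \cite{shiho-log} and is not a formal consequence of adding the word ``logarithmic'' to Theorem~\ref{T:cohomology comparison}. Second, your vanishing argument implicitly needs that coherent cohomology vanishes in positive degrees on the strict neighborhoods (a quasi-Stein acyclicity statement) so that the de Rham hypercohomology on an affine piece reduces to the global-sections complex of length $d$; and the \v{C}ech bound requires control on the number of affines (at most $d+1$ for quasi-projective $X$, more delicate in general). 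Both are standard but should be stated rather than elided.
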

\begin{proof}
See \cite[Theorem~1.2.1]{kedlaya-finiteness}. Alternatively, this can be deduced from
Theorem~\ref{T:semistable} using the fact that Theorem~\ref{T:cohomology comparison} can be extended to logarithmic isocrystals (see \cite{shiho-log}).
\end{proof}

\begin{remark}
Theorem~\ref{T:finite dimensional} fails for convergent $F$-isocrystals if $X$ is not proper: Theorem~\ref{T:cohomology comparison} (suitably stated) remains true without the properness condition, whereas crystalline cohomology for open varieties does not have good finiteness properties. More subtly, Theorem~\ref{T:finite dimensional} also fails for overconvergent isocrystals without Frobenius structure (Remark~\ref{R:no Frobenius}), due to issues involving $p$-adic Liouville numbers (see
Remark~\ref{R:Liouville}).
\end{remark}

For an overconvergent $F$-isocrystal on a curve,
we have the following analogue of the Grothendieck--Ogg--Shafarevich formula \cite{grothendieck-gos}.
The original formulation is due to Garnier \cite[Proposition~5.3.2]{garnier}, though it had to be made conditionally because Theorem~\ref{T:finite dimensional} was not available.

\begin{theorem}[Christol--Mebkhout, Crew, Matsuda, Tsuzuki]
Assume that $k$ is algebraically closed.
Suppose that $X$ is geometrically irreducible of dimension $1$, and let $\overline{X}$ be the smooth compactification of $X$. For $\calE \in \FIsoc^\dagger(X)$ and $x \in \overline{X} - X$, let $\Swan_x(\calE)$ denote the Swan conductor of the local monodromy representation of $\calE$ at $x$ (Remark~\ref{R:local mono rep}). Then
\[
\sum_{i=0}^2 (-1)^i \dim_K H^i_{\rig}(X, \calE)
= \chi(X) \rank(\calE) - \sum_{x \in \overline{X} - X} \Swan_x(\calE).
\]
\end{theorem}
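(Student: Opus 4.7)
The plan is to establish a local-global decomposition of the Euler--Poincar\'e characteristic and then identify each local contribution with a Swan conductor, following the strategy of Christol--Mebkhout, Crew, Matsuda, and Tsuzuki. First I would work concretely with Berthelot's presentation: realize $X$ inside a smooth compactification $\overline{X}$ and $\calE$ as a vector bundle with integrable connection on a strict neighborhood of the tube of $X$ in a suitable formal embedding of $\overline{X}$, so that $H^i_{\rig}(X,\calE)$ is computed as the hypercohomology of the associated de Rham complex.

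Step 1 (local-global decomposition). Choose an open dense affine $V \subset X$ whose complement in $\overline{X}$ is a disjoint union of closed points, chosen so that in an analytic neighborhood of each $x \in \overline{X} - X$ the strict neighborhood of the tube of $V$ meets the tube of the formal puncture at $x$ in an annulus. An excision/Mayer--Vietoris argument then expresses $\chi_{\rig}(X,\calE) := \sum_i (-1)^i \dim_K H^i_{\rig}(X,\calE)$ as a global ``interior'' contribution $\chi_{\rig}(V,\calE)$ plus, for each $x \in \overline{X}-X$, a local contribution $\chi_x(\calE)$ computed from the de Rham complex of the pullback of $\calE$ to the Robba ring $\calR$ at $x$.

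Step 2 (interior term via constant coefficients). For $\calE = \calO$ the formula reduces to the classical $\chi(X) = 2-2g-\#(\overline{X}-X)$, with vanishing local terms; this is the base case and also pins down the combinatorics of the excision in Step 1. Comparing the de Rham complex of $\calE$ on $V$ with the constant-coefficient complex (using that $\calE|_V$ is locally free of rank $\rank(\calE)$), one shows that the interior contribution is $\chi(V)\rank(\calE)$. Recombining with the excision terms from Step 1 yields
\[
\chi_{\rig}(X,\calE) = \chi(X)\rank(\calE) + \sum_{x \in \overline{X}-X}\bigl(\chi_x(\calE) - \chi_x(\calO)\rank(\calE)\bigr),
\]
so it remains to show that each $x$-summand equals $-\Swan_x(\calE)$.

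Step 3 (reduction to the Robba ring). Let $\calE_x$ denote the pullback of $\calE$ to $\FIsoc^\dagger(k((t)))$ for a local parameter $t$ at $x$. By the $p$-adic local monodromy theorem (Remark~\ref{R:Crew1}), after a finite \'etale extension $\Spec k'((u)) \to \Spec k((t))$ the image of $\calE_x$ in $\FIsoc^\ddagger(k'((u)))$ is a successive extension of pullbacks of objects from $\FIsoc(k')$. The local de Rham index is additive in short exact sequences and behaves compatibly under finite \'etale base change (a standard Hurwitz-type computation relating Swan conductors and ramification), so it suffices to compute $\chi_x$ for differential modules on the Robba ring.

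Step 4 (irregularity equals Swan, and assembly). The central input is the Christol--Mebkhout index theorem for $p$-adic differential equations on $\calR$: the local index equals rank minus irregularity, where irregularity is read off from the slopes of the Newton polygon of the connection. Matsuda's comparison theorem then identifies this $p$-adic irregularity with the Swan conductor of the local monodromy representation attached to $\calE_x$ via Remark~\ref{R:local mono rep}. Substituting this identification into the formula of Step 2 yields the stated equality. The main obstacle is this last step: neither the index formula nor the equality \emph{irregularity} $=$ \emph{Swan} is formal, and both depend on the fine structure of $\varphi$-modules over $\calR$ (the Christol--Mebkhout slope decomposition and Matsuda/Tsuzuki's compatibility of $p$-adic slope filtrations with Galois-theoretic ramification filtrations) rather than on Crew's conjecture alone.
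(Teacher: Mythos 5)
The paper itself gives no argument here: its ``proof'' is a pointer to \cite[Theorem~4.3.1]{kedlaya-weil2}, so strictly there is nothing to compare against. Evaluating your sketch on its own merits: the broad strategy (reduce to a local index computation at each $x\in\overline{X}-X$ on the Robba ring, invoke the Christol--Mebkhout index theorem to express that local index in terms of $p$-adic irregularity, then cite the Matsuda/Tsuzuki identification of irregularity with the Swan conductor of the local monodromy representation) is indeed the route taken in the literature that the theorem is attributed to, and it correctly isolates where the real work is.

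That said, there is a genuine gap in Step~2, and a mild redundancy between Steps~3 and~4. In Step~2 you assert that ``comparing the de Rham complex of $\calE$ on $V$ with the constant-coefficient complex (using that $\calE|_V$ is locally free of rank $\rank(\calE)$), one shows that the interior contribution is $\chi(V)\rank(\calE)$.'' Local freeness gives nothing of the sort: on an open affine the Euler characteristic of a nontrivial overconvergent $F$-isocrystal generally differs from $\chi(V)\rank(\calE)$, and that discrepancy is precisely what the irregularity/Swan terms measure. (Take $V=X=\AAA^1_k$ and $\calE$ a rank-one Dwork/Artin--Schreier isocrystal with $\Swan_\infty=1$: then $\chi_{\rig}(V,\calE)=0\neq 1=\chi(V)$.) The fix is not to compute $\chi_{\rig}(V,\calE)$ at all but rather to write, for each boundary point, the local index as a single quantity and let the Christol--Mebkhout theorem produce both the topological $\rank(\calE)$ term and the $-\mathrm{Irr}_x$ correction; nothing purely ``interior'' equals $\chi(V)\rank(\calE)$. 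On the Steps~3/4 overlap: you first invoke the $p$-adic local monodromy theorem to split $\calE_x$ into pullbacks from $\FIsoc(k')$ after a finite \'etale cover, and then separately invoke the Christol--Mebkhout index theorem; but the latter applies directly to the pullback of $\calE$ to $\calR$ (no Frobenius trivialization needed) and yields the index in terms of irregularity in one step, so the reduction via the monodromy theorem is superfluous for computing the index --- it is used instead, under the hood, in Matsuda and Tsuzuki's proof that irregularity equals the Swan conductor of the representation furnished by Remark~\ref{R:local mono rep}. Restructuring accordingly would make the argument both shorter and correct.
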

\begin{proof}
See \cite[Theorem~4.3.1]{kedlaya-weil2}.
\end{proof}

\begin{theorem}
Rigid cohomology (of an overconvergent $F$-isocrystal) satisfies cohomological descent for proper hypercoverings.
\end{theorem}
\begin{proof}
See \cite[Corollary~2.2.3]{tsuzuki-descent}.
\end{proof}

\begin{remark}
There is also a theory of rigid cohomology with compact support admitting a form of Poincar\'e duality; see \cite{kedlaya-finiteness}. This is relevant for the Lefschetz trace formula; see Remark~\ref{R:trace formula}.
\end{remark}

\section{Finite fields}
\label{sec:finite fields}

We now specialize to the situation over finite fields. In order to best simulate the $\ell$-adic setting, we must 
promote the categories of isocrystals, which are linear over some finite extension of $\QQ_p$, to $\overline{\QQ}_p$-linear categories. We follow the general approach of \cite{abe-companion}.

\begin{hypothesis} \label{H:finite fields}
Throughout \S\ref{sec:finite fields}, assume that $k = \FF_q$ is finite
and choose a homomorphism $j: W(\overline{k}) \hookrightarrow \overline{\QQ}_p$.
For $n$ a positive integer, let $k_n$ be the degree $n$ subextension of $\overline{k}$ over $k$,
and put $X_n := X \times_k k_n$.
\end{hypothesis}

\begin{defn}
For each finite extension $L$ of $\QQ_p$ within $\overline{\QQ}_p$, 
let $\FIsoc(X) \otimes L$ (resp.\ $\FIsoc^\dagger(X) \otimes L$ be the category of objects of $\FIsoc(X)$ (resp.\ $\FIsoc^\dagger(X)$) equipped with a $\QQ_p$-linear action of $L$. Let
$\FIsoc(X) \otimes \overline{\QQ}_p$ (resp.\ 
$\FIsoc^\dagger(X) \otimes \overline{\QQ}_p$) be the 2-colimit of the categories
$\FIsoc(X) \otimes L$ (resp.\
$\FIsoc^\dagger(X) \otimes L$) over all finite extensions $L$ of $\QQ_p$ within $\overline{\QQ}_p$.

We extend the tensor product operation to $\FIsoc(X) \otimes \overline{\QQ}_p$ (and similarly 
$\FIsoc^\dagger(X) \otimes \overline{\QQ}_p$) as in \cite[\S 2.2]{abe-companion}. Given two objects
$\calE_1, \calE_2$ in $\FIsoc(X) \otimes L$ for some $L$, the tensor product
$\calE := \calE_1 \otimes \calE_2$ in $\FIsoc(X)$ inherits two distinct $L$-linear structures, and we define the tensor product in $\FIsoc(X) \otimes L$ to be the maximal quotient of $\calE$ on which the two $L$-linear structures coincide. Similarly, for each positive integer $n$,
applying the base extension functor $\FIsoc(X) \to \FIsoc(X_n)$ to the underlying object of some
$\calE \in \FIsoc(X) \otimes \overline{\QQ}_p$ yields an object
$\calE_n$ inheriting two distinct $W(k_n)$-linear structures
(one of them coming via $j$), and we define the base extension functor $\FIsoc(X) \otimes \overline{\QQ}_p \to
\FIsoc(X_n) \otimes \overline{\QQ}_p$ so as to take $\calE$ to the maximal quotient of $\calE_n$ on which the two
$W(k_n)$-linear structures coincide.

To justify the omission from $k$ in the notation, we observe that the category $\FIsoc(X) \otimes \overline{\QQ}_p$ remains unchanged if one changes the structure morphism $X \to \Spec k$ to $X \to \Spec k_n$. More precisely, if $X$ is irreducible and $k_n$ is the normalization of $k$ in $k(X)$, then the composition of the base extension functor 
from $\FIsoc(X) \otimes \overline{\QQ}_p$ (defined relative to $k$) to $\FIsoc(X_n) \otimes \overline{\QQ}_p$
(defined relative to $k_n$) with pullback from $X_n$ to one of its connected components is an equivalence.
See also Definition~\ref{D:linearized Frobenius action} for the case $X = \Spec(k_n)$.
\end{defn}

\begin{remark} \label{R:promotion1}
With the previous caveats about tensor products and base extensions,
all of the previous results about $\FIsoc(X)$ and $\FIsoc^\dagger(X)$ can be formally promoted to statements about $\FIsoc(X) \otimes \overline{\QQ}_p$ and $\FIsoc^\dagger(X) \otimes \overline{\QQ}_p$, which we will mostly use without further comment.
One cautionary remark: for objects in $\FIsoc(X) \otimes L$, the $y$-coordinates of the vertices of the slope polygon belong not to $\ZZ$ but to $e^{-1} \ZZ$ where $e$ is the absolute ramification index of $L$.
\end{remark}

We spell out explicitly one instance of Remark~\ref{R:promotion1}, corresponding to Theorems~\ref{T:unit root1}
and~\ref{T:unit root2}.
\begin{theorem} \label{T:promotion unit-root}
Let $L$ be a finite extension of $\QQ_p$.
\begin{enumerate}
\item[(a)]
The category of unit-root objects in $\FIsoc(X) \otimes L$ is equivalent to the category of \'etale $L$-local systems on $X$. 
\item[(b)]
Under this equivalence, the unit-root objects in $\FIsoc^\dagger(X) \otimes L$ correspond to the potentially unramified $L$-local systems on $X$.
\end{enumerate}
\end{theorem}
\begin{proof}
This follows by applying Theorems~\ref{T:unit root1}
and~\ref{T:unit root2} to the underlying objects in $\FIsoc(X)$ and $\FIsoc^\dagger(X)$ on one hand, and the underlying \'etale $\QQ_p$-local systems on the other hand.
\end{proof}

Over a finite field, we can define the $L$-function associated to an overconvergent $F$-isocrystal and formulate the Lefschetz trace formula for Frobenius.
\begin{defn} \label{D:linearized Frobenius action}
Suppose that $k = \FF_q$ is finite. For $n$ a positive integer, 
put $k_n = \FF_{q^n} \subseteq \overline{k}$ and $K_n = \Frac W(k_n)$.
An object of $\FIsoc^\dagger(k_n) \otimes \overline{\QQ}_p$
corresponds to a finite $(K_n \otimes_{\QQ_p} \overline{\QQ}_p)$-module equipped with an isomorphism with its $(\varphi \otimes 1)$-pullback,
 or equivalently to a finite-dimensional $\overline{\QQ}_p$-vector space equipped with an invertible endomorphism
(the \emph{linearized Frobenius action}). Note that the second equivalence depends on our prior choice of an embedding $j: K_n \hookrightarrow \overline{\QQ}_p$, but  the conjugacy class of the resulting endomorphism does not.

Let $X^\circ$ be the set of closed points of $X$. 
Given $\calE \in \FIsoc^\dagger(X) \otimes \overline{\QQ}_p$,
define the \emph{$L$-function} associated to $X$ as the power series
\[
L(\calE, T) := \prod_{x \in X^\circ} \prod_{\alpha \in S_x} (1 - \alpha T^{\deg(x/\FF_q)})^{-1} \in \overline{\QQ}_p \llbracket T \rrbracket,
\]
where $S_x$ is the multiset of eigenvalues of the linearized Frobenius action on $\calE_x$.

When $X$ is a curve, one can also define Euler factors at points of the smooth compactification of $X$;
these play a crucial role in the Langlands correspondence. See \cite[\S A.2]{abe-companion} for a detailed construction.
\end{defn}

\begin{theorem}[\'Etesse--Le Stum] \label{T:trace formula}
Suppose that $k = \FF_q$ is finite and that $X$ is of pure dimension $d$.
For $\calE \in \FIsoc^\dagger(X) \otimes \overline{\QQ}_p$, we have
\begin{equation} \label{eq:trace formula}
L(\calE, T) = \prod_{i=0}^{2d} \det(1 - q^{-d} F^{-1} T, H^i_{\rig}(X, \calE^\dual))^{(-1)^{i+1}}.
\end{equation}
\end{theorem}
\begin{proof}
See \cite[Th\'eor\`eme~6.3]{etesse-lestum}.
\end{proof}
\begin{remark} \label{R:trace formula}
In terms of cohomology with compact support, the Lefschetz trace formula for Frobenius 
reads more simply
\[
L(\calE, T) = \prod_{i=0}^{2d} \det(1 - FT, H^i_{c,\rig}(X, \calE))^{(-1)^{i+1}};
\]
moreover, it continues to hold without assuming that $X$ is smooth. See
\cite[Th\'eor\`eme~6.3]{etesse-lestum}, \cite[(2.1.2)]{kedlaya-weil2}.
\end{remark}

While it is not the case that overconvergent isocrystals can be described completely in terms of group representations
(except in the unit-root case), one can use the formalism of Tannakian categories to construct \emph{monodromy groups} that record some crucial information. We give a brief discussion here; see \cite{daddezio} for a more detailed exposition.

\begin{defn} \label{D:geometric monodromy}
Suppose that $X$ is connected and choose a closed point $x \in X^\circ$ (which we will typically neglect to mention when applying this construction). 
Let $\omega_x$ be the $\overline{\QQ}_p$-linear fiber functor on
$\FIsoc^\dagger(X) \otimes \overline{\QQ}_p$ taking $\calE$ to the underlying vector space of the linearized Frobenius action on $\calE_x$ (see Definition~\ref{D:linearized Frobenius action}). 
After restricting this to the Tannakian category generated by $\calE$ within $\FIsoc^\dagger(X) \otimes \overline{\QQ}_p$, we may extend it to the Tannakian category $[\calE]$ generated by $\calE$ within the category of \emph{overconvergent isocrystals on $X$ without Frobenius structure}
(tensored with $\overline{\QQ}_p$); this amounts to allowing objects which are stable under the connection but not the Frobenius action. (For a more thorough development of the theory of overconvergent isocrystals, see the references given in \S\ref{sec:basic construction}.)
 Taking the automorphism group of the resulting fiber functor yields a linear algebraic group $\overline{G}(\calE)$ over $\overline{\QQ}_p$, called the \emph{geometric monodromy group} of $\calE$.
\end{defn}

\begin{remark}
In the original development of monodromy groups of isocrystals given by Crew \cite{crew-mono},
the geometric hypotheses are somewhat stronger: $X$ is required to be a geometrically connected curve,
and the base point $x$ is required to be $k$-rational.
(See \cite[\S 3--4]{pal} for an alternate treatment in that context.)
The condition that $X$ be a curve was imposed due to limitations in the theory of overconvergent $F$-isocrystals at the time (in particular, Theorem~\ref{T:semistable} was unknown even for curves). 
The other restrictions were made so that Crew could work directly with $\FIsoc^\dagger(X)$,
and in particular to avoid having to require $k$ to be finite.
The extension of Crew's results to the setup we have described is straightforward, but for clarity we have chosen to spell out a few of the key steps.
\end{remark}

\begin{remark} \label{R:monodromy restriction}
Theorem~\ref{T:extend subobject} remains true for overconvergent isocrystals without Frobenius structure
(again see \cite[Proposition~5.3.1]{kedlaya-semi1}).
Consequently, geometric monodromy groups remain invariant under restriction from $X$ to an open dense subscheme;
this answers a question raised in \cite[Remark~2.9]{crew-mono}.
\end{remark}

For missing details in the following example, see \cite[Proposition~4.11]{crew-mono}. 
\begin{example} \label{exa:monodromy}
We calculate $\overline{G}(\calE)$ in the setting of Example~\ref{E:elliptic unit-root}.
By Remark~\ref{R:monodromy restriction}, this will not depend on whether we work over $X$ or $U$.

By definition, we have $\overline{G}(\calE) \subseteq \GL_2$.
Using the trace map in crystalline cohomology, one may show that $\overline{G}(\wedge^2 \calE)$ is trivial;
this implies that $\overline{G}(\calE) \subseteq \SL_2$.
On the other hand, the monodromy group of $\calE$ in the category of convergent isocrystals over $U$
is a Borel subgroup $B$ of $\SL_2$, corresponding to the slope filtration. (We omit the calculation that is required to show that the convergent monodromy group is not any smaller than $B$.) Since the slope filtration does not
extend to $\FIsoc^\dagger(X) \otimes \overline{\QQ}_p$, it follows that $\overline{G}(\calE) \neq B$ and hence $\overline{G}(\calE) = \SL_2$. In particular, $\Sym^n \calE$ is irreducible 
in $\FIsoc^\dagger(U) \otimes \overline{\QQ}_p$  for all positive integers $n$.
\end{example}

\begin{defn} \label{D:Weil group}
Fix a prime $\ell \neq p$. In the $\ell$-adic setting, the category corresponding to $\FIsoc^\dagger(X) \otimes \overline{\QQ}_p$ is the category of \emph{lisse Weil $\overline{\QQ}_\ell$-sheaves}. For $X$ connected, these may be described as the 2-colimit over finite extensions $L$ of $\QQ_p$ of the continuous representations of the \emph{Weil group} of $X$ on finite-dimensional $L$-vector spaces. The Weil group $W_X$ is in turn defined, in terms of a geometric point $\overline{x}$ of $X$, as the semidirect product of the geometric \'etale fundamental group $\pi_1(X_{\overline{k}}, \overline{x})$ by the action of Frobenius; we thus have an exact sequence
\[
1 \to \pi_1(X_{\overline{k}}, \overline{x}) \to W_X \to \ZZ \to 1.
\]
One may define the \emph{geometric monodromy group} and the \emph{Weil group} of an individual representation by taking the images of $\pi_1(X_{\overline{k}}, \overline{x})$ and $W_X$, respectively.

In a similar vein, we may define the \emph{Weil group} of $\calE \in \FIsoc^\dagger(X) \otimes \overline{\QQ}_p$ as 
the semidirect product $W(\calE)$ of $\overline{G}(\calE)$ by Frobenius; this is an algebraic group over $\overline{\QQ}_p$ equipped with a tautological linear representation which is faithful on $\overline{G}(\calE)$
and fitting into an exact sequence
\begin{equation} \label{eq:Weil group}
1 \to \overline{G}(\calE) \to W(\calE) \to \ZZ \to 1.
\end{equation}
The projection to $\ZZ$ is the \emph{degree map}.
\end{defn}

The definition of the Weil group makes it possible to transport various basic arguments from \'etale cohomology into the crystalline setting. As an example, we offer the following remark suggested by Marco D'Addezio.
(See \cite[Proposition~3.9]{pal} for an alternate treatment.)
\begin{remark} \label{R:finite base extension}
Suppose that $X$ is connected.
In \cite[\S 1.2, 1.3]{deligne-finite}, the following two facts about
lisse Weil $\overline{\QQ}_\ell$-sheaves are used without further comment; however, they are both contained in the conjunction of \cite[Lemma~II.2.4, Proposition~II.2.5]{yu}.
\begin{enumerate}
\item[(a)]
A representation $\rho$ of $W_{X_n}$ remains irreducible after all finite base extensions of $k$ if and only if its restriction to  $\pi_1(X_{\overline{k}}, \overline{x})$ is irreducible. In this case, we say $\rho$ is \emph{geometrically irreducible.}
\item[(b)]
Any irreducible representation splits, for some $n$, as a direct sum of $n$ irreducible representations of $W_{X_n}$ permuted cyclically by the $k$-Frobenius.
\end{enumerate}
Using similar arguments, we may obtain analogous assertions for isocrystals.
\begin{enumerate}
\item[(a)]
An object $\calE \in \FIsoc^\dagger(X) \otimes \overline{\QQ}_p$ remains irreducible after all finite base extensions of $k$ if and only if the tautological representation of $W(\calE)$ restricts to an irreducible representation of $\overline{G}(\calE)$ (i.e., if $\calE$ is irreducible even without its Frobenius structure). In this case, we say $\calE$ is \emph{geometrically irreducible}.
\item[(b)]
For $\calE \in \FIsoc^\dagger(X) \otimes \overline{\QQ}_p$ irreducible, there exists a positive integer $n$
such that in $\FIsoc^\dagger(X_{n}) \otimes \overline{\QQ}_p$, $\calE$ splits as a direct sum of $n$
geometrically irreducible summands  permuted cyclically by the action of the $k$-Frobenius.
\end{enumerate}
We spell this out in detail for (a). Suppose that $\calE$ remains irreducible after all finite base extensions of $k$.
Since $\calE$ is semisimple, $\overline{G}(\calE)$ is reductive (but see Corollary~\ref{C:semisimple} for a stronger statement). Since the tautological representation is faithful on
$\overline{G}(\calE)$, the implication (i) $\Rightarrow$ (iv) of \cite[Lemme~1.3.10]{deligne-weil2} implies that the center of $W(\calE)(\overline{\QQ}_p)$ contains an element $g$ of some positive degree $n$
(but again, see Corollary~\ref{C:center element positive degree} for a stronger statement).
By hypothesis, the tautological representation of $W(\calE)$ remains irreducible upon restriction to the inverse image of $n \ZZ$; since $g$ defines an automorphism of this restricted representation, by Schur's lemma it must act via a scalar multiplication. Hence the original representation of $\overline{G}(\calE)$ must also be irreducible.

For (b), we point out solely that the relevant arguments are
$1 \Rightarrow 2$ of \cite[Lemme~II.2.4]{yu} and $1 \Rightarrow 2$ of \cite[Proposition~II.2.5]{yu},
both of which are of purely group-theoretic nature; they concern the behavior of induction between Weil groups.
They thus carry over directly to arguments involving $W(\calE)$.
\end{remark}

One point at which the \'etale--crystalline analogy suffers some strain is the nature of abelian monodromy.
For a lisse Weil $\overline{\QQ}_\ell$-sheaf of rank 1 on $X$, the geometric monodromy group is always finite due to the mismatch between the $\ell$-adic and $p$-adic topologies \cite[Proposition~1.3.4]{deligne-weil2}.
This argument cannot be applied in the crystalline setting; however, it can be replaced with a 
 more intricate argument from geometric class field theory due to Katz--Lang \cite{katz-lang} to obtain the following result.
\begin{lemma}[Crew, Abe] \label{L:finite order determinant}
For any $\calE \in \FIsoc^\dagger(X) \otimes \overline{\QQ}_p$,
there exists an object $\calF \in \FIsoc^\dagger(k) \otimes \overline{\QQ}_p$ of rank $1$ such that
$\det(\calE \otimes \calF)$ is of finite order. In particular, if $X$ is connected, then $\overline{G}(\det(\calE))$ is finite. 
 \end{lemma}
\begin{proof}
This reduces at once to the case where $\rank(\calE) = 1$.
For this, see \cite[Corollary~1.5]{crew-mono} in the case where $X$ is a geometrically connected curve,
and \cite[Lemma~6.1]{abe-crelle} in the general case.
\end{proof}

The following corollary of Lemma~\ref{L:finite order determinant} is parallel to \cite[1.3]{deligne-finite}.
\begin{cor} \label{C:twist decomposition}
Suppose that $X$ is connected.
For $n$ a positive integer, let $\pi_n: X_n \to X$ be the canonical projection.
For any semisimple $\calE \in \FIsoc^\dagger(X) \otimes \overline{\QQ}_p$,
there exists a decomposition
\begin{equation} \label{eq:decompose to finite order}
\calE \cong \bigoplus_i \pi_{n_i*}(\calE_i \otimes \calL_i)
\end{equation}
in which for each $i$, $n_i$ is a positive integer,
$\calE_i$ is an object of $\FIsoc^\dagger(X_{n_i}) \otimes \overline{\QQ}_p$ which is geometrically irreducible (in the sense of Remark~\ref{R:finite base extension}) and has determinant of finite order,
and $\calL_i$ is an object of $\FIsoc^\dagger(k_{n_i}) \otimes \overline{\QQ}_p$ of rank $1$.
\end{cor}
\begin{proof}
We may assume at once that $\calE$ is irreducible.
By Remark~\ref{R:finite base extension}, there exists a positive integer $n$ such that $\calE$ splits in $\FIsoc^\dagger(X_n) \otimes \overline{\QQ}_p$ as a direct sum of absolutely irreducible subobjects
$\calF_1,\dots, \calF_n$ which are permuted cyclically by the action of Frobenius.
We then have a canonical isomorphism $\calE \cong \pi_{n*} \calF_1$, and applying Lemma~\ref{L:finite order determinant} to $\calF_1$ thus yields the desired result. 
\end{proof}

In general, the geometric monodromy group of an isocrystal cannot be naturally interpreted as a quotient of the geometric \'etale fundamental group; however, this is crucially true for unit-root isocrystals.

\begin{lemma} \label{L:unit-root monodromy}
Suppose that $X$ is connected and choose a geometric point $\overline{x}$
lying over $x$.
Let $\calE \in \FIsoc^\dagger(X) \otimes \overline{\QQ}_p$ be a unit-root object corresponding
as per Theorem~\ref{T:promotion unit-root} to a representation $\rho: \pi_1(X, \overline{x}) \to \GL_n(\overline{\QQ}_p)$.
Then there is a canonical isomorphism of $\overline{G}(\calE)$ with the Zariski closure of
$\rho(\pi_1(X_{\overline{k}}, \overline{x}))$.
\end{lemma}
\begin{proof}
This follows from Theorem~\ref{T:promotion unit-root} as in the proof of \cite[Proposition~3.7]{crew-mono}.
\end{proof}

This leads to an analogue of \cite[Proposition~4.3]{crew-mono}.
\begin{cor} \label{C:finite cover to trivial}
Suppose that $X$ is connected.
For $\calE \in \FIsoc^\dagger(X) \otimes \overline{\QQ}_p$, 
$\overline{G}(\calE)$ is finite if and only if
there exists a finite \'etale cover $f: X' \to X$ such that $\overline{G}(f^* \calE)$ is the trivial group.
(In the language of \cite{crew-mono}, this means that $f^*\calE$ is \emph{isotrivial}.)
\end{cor}
\begin{proof}
We may assume at once that $X$ is geometrically connected. Then the proof of \cite[Proposition~4.3]{crew-mono} carries over unchanged, except that \cite[Proposition~3.7]{crew-mono} must be replaced with Lemma~\ref{L:unit-root monodromy}.
\end{proof}

This in turn leads to an analogue of \cite[Proposition~4.6]{crew-mono}. 
\begin{lemma} \label{L:connected monodromy}
Suppose that $X$ is connected.
For $\calE \in \FIsoc^\dagger(X) \otimes \overline{\QQ}_p$,  the following statements hold.
(For $G$ a group scheme over a field, we write $G^\circ$ for the identity connected component of $G$.)
\begin{enumerate}
\item[(a)]
For $f: X' \to X$ a connected finite \'etale cover, 
the natural inclusion $\overline{G}(f^* \calE) \to \overline{G}(\calE)$ is an open immersion,
or in other words $\overline{G}(f^* \calE)^\circ = \overline{G}(\calE)^\circ$.
\item[(b)]
There exists a choice of $f$ for which $\overline{G}(f^* \calE)$ is connected, and therefore corresponds to $\overline{G}(\calE)^\circ$ via the natural inclusion.
\end{enumerate}
\end{lemma}
\begin{proof}
We may assume at once that $X$ is geometrically connected and choose a geometric basepoint $\overline{x}$ lying over $x$. To prove (a), it suffices to treat the case where $f$ is Galois (namely, for general $f$ we can find a Galois cover $f'$ factoring through $f$, and then the claim for $f'$ implies the claim for $f$).
We may also assume (by enlarging $k$ if needed) that both $X$ and $Y$ are geometrically connected.
Let $H$ be the automorphism group of $f$; then $H$ acts naturally on $[f^* \calE]$ and hence on $\overline{G}(f^* \calE)$, and the natural inclusion $\overline{G}(f^* \calE) \to \overline{G}(\calE)$ extends to a morphism $\overline{G}(f^* \calE) \rtimes H \to \overline{G}(\calE)$. From the fact that 
overconvergent isocrystals without Frobenius structure admit effective descent for finite \'etale coverings (see for example \cite{lazda}), 
it follows that $\overline{G}(f^* \calE) \rtimes H \to \overline{G}(\calE)$ is surjective,
and hence $\dim \overline{G}(f^* \calE) \geq \dim \overline{G}(\calE)$. Since $\overline{G}(f^* \calE) \to \overline{G}(\calE)$ is injective, we must have $\overline{G}(f^* \calE)^\circ = \overline{G}(\calE)^\circ$, proving (a).

To prove (b), we characterize the
quotient group $\pi_0(\overline{G}(\calE))$ as the automorphism group of $\omega_x$ on
the category of objects $\calF \in [\calE]$
for which $\overline{G}(\calF)$ is finite. 
This category can be generated by some finite set of irreducible objects $\calF_1,\dots,\calF_n$.
For each $i\in \{1,\dots,n\}$, $\calF_i$ occurs as a Jordan-H\"older constituent of an object $\calG$ of $\FIsoc^\dagger(X) \otimes \overline{\QQ}_p$; the set of isomorphism classes of constituents of $\calG$ in $[\calG]$ is finite and acted upon by $\varphi^*$, so there exists a positive integer $m$ such that
$\calF_i \oplus \varphi^* \calF_i \oplus \cdots \oplus \varphi^{(m-1)*} \calF_i$ admits a Frobenius structure.
We may thus apply Corollary~\ref{C:finite cover to trivial}
to $\calF_i \oplus \varphi^* \calF_i \oplus \cdots \oplus \varphi^{(m-1)*} \calF_i$ to obtain
a finite \'etale cover $f: X' \to X$ such that $\overline{G}(f^* \calF_i)$ is trivial.
By taking a fiber product, we can make a single choice of $f$ that works for all $i$; this cover has the desired effect.
\end{proof}

This finally leads to an analogue of Grothendieck's global monodromy theorem
\cite[Theorem~I.3.3(1)]{kiehl-weissauer}.
See also \cite[Theorem~3.4.4]{daddezio}.

\begin{theorem}[Crew] \label{T:global monodromy}
Suppose that $X$ is connected.
For $\calE \in \FIsoc^\dagger(X) \otimes \overline{\QQ}_p$, 
the radical of $\overline{G}(\calE)^\circ$ is unipotent.
\end{theorem}
\begin{proof}
It suffices to check the claim after replacing $X$ with a finite \'etale cover and/or replacing $k$ with a finite extension. By Lemma~\ref{L:connected monodromy}, we may thus assume that $X$ is geometrically irreducible, $x \in X(k)$, and $\overline{G}(\calE)$ is connected. We may then argue as in \cite[Theorem~4.9]{crew-mono},
using Lemma~\ref{L:finite order determinant} in place of \cite[Corollary~1.5]{crew-mono}.
\end{proof}
\begin{cor} \label{C:semisimple}
Suppose that $X$ is connected.
For $\calE \in \FIsoc^\dagger(X) \otimes \overline{\QQ}_p$ semisimple, $\overline{G}(\calE)$ is also semisimple.
\end{cor}
\begin{proof}
This follows from Theorem~\ref{T:global monodromy} as in  \cite[Corollary~4.10]{crew-mono}.
\end{proof}

\begin{cor} \label{C:center element positive degree}
Suppose that $X$ is connected and $\calE \in \FIsoc^\dagger(X) \otimes \overline{\QQ}_p$ is semisimple.
Let $Z$ be the center of $W(\calE)(\overline{\QQ}_p)$. Then the degree map $Z \to \ZZ$ 
has finite kernel and cokernel; more precisely, $Z$ contains a power of some element of $W(\calE)(\overline{\QQ}_p)$ of degree $1$.
\end{cor}
\begin{proof}
This follows from Corollary~\ref{C:semisimple} as in the proof of \cite[Theorem~I.3.3(2)]{kiehl-weissauer}.
\end{proof}

\section{Theory of weights}
\label{sec:weights}

Since rigid cohomology is a Weil cohomology theory, one may reasonably expect that the theory of weights in $\ell$-adic \'etale cohomology should carry over. This expectation turns out to be correct.

\begin{hypothesis} \label{H:weights}
Throughout \S\ref{sec:weights}, 
continue to retain Hypothesis~\ref{H:finite fields}.
In addition, fix an algebraic embedding $\iota: \overline{\QQ}_p \hookrightarrow \CC$.
\end{hypothesis}

\begin{defn}
Suppose that $\calE \in \FIsoc^\dagger(X) \otimes \overline{\QQ}_p$.
\begin{itemize}
\item
For $w \in \RR$, we say that $\calE$ is \emph{$\iota$-pure of weight $w$} if for each closed point $x \in X$ with residue field $k_n$, each eigenvalue $\alpha$ of the linearized Frobenius action on $\calE_x$
(see Definition~\ref{D:linearized Frobenius action}) satisfies $\left| \iota(\alpha) \right| = q^{nw/2}$.
\item
We say that $\calE$ is \emph{$\iota$-mixed of weights $\geq w$} (resp.\ $\leq w$)
if it is a successive extension of objects, each of which is $\iota$-pure of some weight $\geq w$ (resp.\ $\leq w$).
\end{itemize}
\end{defn}

We have the following partial analogue of Deligne's ``Weil II'' theorem \cite{deligne-weil2}. 
A more complete analogue can be stated in terms of constructible coefficients;
see \S\ref{sec:constructible}.
\begin{theorem}[Kedlaya] \label{T:weil2}
Suppose that $\calE \in \FIsoc^\dagger(X) \otimes \overline{\QQ}_p$ is $\iota$-mixed of weights $\geq w$.
Then for all $i \geq 0$, $H^i_{\rig}(X, \calE)$ is $\iota$-mixed of weights $\geq w+i$.
\end{theorem}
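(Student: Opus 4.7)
The plan is to adapt Deligne's proof of the main theorem of \cite{deligne-weil2} to the $p$-adic setting, using the tools developed in the preceding sections as replacements for the $\ell$-adic ingredients.

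First I would make two initial reductions. By d\'evissage along the weight filtration (and the long exact sequence for rigid cohomology of a short exact sequence of coefficients), it suffices to treat an $\iota$-pure $\calE$ of some weight $w_0 \geq w$. Applying Corollary~\ref{C:twist decomposition}, together with compatibility of rigid cohomology with finite \'etale pushforwards and the fact that tensoring by a rank-one object shifts weights in a controlled way, one further reduces to the case where $\calE$ lives on some $X_n$ with $\det \calE$ of finite order. In that situation the linearized Frobenius eigenvalues at each closed point acquire the symmetry needed to run Deligne's ``even tensor power'' trick verbatim: bounds on $|\iota(\alpha)|$ for eigenvalues of Frobenius on $\calE^{\otimes 2k}$ pin down the weight exactly.

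Next I would cut down to curves. Using Remark~\ref{R:affine cover}, and extending across codimension-two strata via Theorem~\ref{T:purity} when needed, one may cover $X$ by affine opens admitting finite \'etale maps to $\AAA^n_k$; composing with a generic linear projection $\AAA^n_k \to \AAA^{n-1}_k$ produces a fibration whose general fibre is a smooth affine curve. The Leray spectral sequence for this fibration, whose $E_2$-terms are finite-dimensional by Theorem~\ref{T:finite dimensional}, propagates weight lower bounds from fibrewise cohomology to total cohomology, and an induction on $\dim X$ reduces the problem to the case $\dim X = 1$.

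The main obstacle is the curve case, which would be handled following \cite{kedlaya-weil2}. After choosing the smooth compactification $\overline{X}$ and invoking semistable reduction (Theorem~\ref{T:semistable}) on a finite cover to control wild ramification at the boundary, I would combine the Grothendieck--Ogg--Shafarevich-type formula stated in the preceding section with the Lefschetz trace formula in rigid cohomology (\cite{etesse-lestum}) to translate the pointwise bound $|\iota(\alpha_x)| \geq q_x^{w_0/2}$ into lower bounds on the archimedean absolute values of Frobenius eigenvalues on $H^*_{\rig}(X, \calE)$. The passage from pointwise to cohomological bounds proceeds by a Rankin--Selberg style argument: apply the trace formula to $\calE^{\otimes 2k}$ for all $k$, let $k \to \infty$, and extract the leading growth rate to force the sharp weight inequality $\geq w + i$. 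The essential local input at points of $\overline{X} \setminus X$ is Crew's conjecture (Remark~\ref{R:Crew1}), which ensures that local monodromy has finite image of inertia and hence controllable epsilon factors; without this, the Swan conductor terms could not be absorbed into the estimate.
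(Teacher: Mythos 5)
Your outline transplants Deligne's original route from \cite{deligne-weil2} (d\'evissage to pure objects, fibering into curves, trace formula plus even tensor powers, local control at the boundary), whereas the proof this paper actually leans on is the citation \cite[Theorem~5.3.2]{kedlaya-weil2}, which is organized around Laumon's stationary-phase method \cite{laumon} in Katz's formulation \cite{katz-weil2}, with Huyghe's arithmetic $\mathscr{D}$-module Fourier transform \cite{huyghe} replacing the $\ell$-adic one; a Deligne-style argument is not unreasonable in principle, since the trace formula, finiteness, duality and the $p$-adic local monodromy theorem are all available. But as written your sketch breaks down exactly at the heart of the matter, the curve case. The Rankin--Selberg step you describe --- apply the trace formula to $\calE^{\otimes 2k}$, let $k \to \infty$, ``extract the leading growth rate'' --- does not force the sharp inequality. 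Convergence of the Euler product controls the zero-free region of the $L$-function and hence bounds the Frobenius eigenvalues on $H^1_c$ of a curve only up to weight $w+2$, one unit worse than needed; and tensor powers do not repair this, because $H^1_c(\calE^{\otimes 2k})$ does not contain $(H^1_c(\calE))^{\otimes 2k}$ --- the even-tensor-power trick pins down the \emph{pointwise} weights at closed points (Weil~I), not the cohomological ones. Closing that one-unit gap is precisely the content of Weil~II: Deligne does it using the local monodromy filtration at the boundary together with the functional equation, and \cite{kedlaya-weil2} does it with local Fourier transforms and stationary phase. Invoking Crew's conjecture (Remark~\ref{R:Crew1}) to say the local ``epsilon factors'' are controllable is not a substitute for this mechanism; one needs an actual theory of weights for the local monodromy representations at the points of $\overline{X}-X$ and a way to feed it into the global estimate.

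There is a second structural gap: the statement to be proved is a \emph{lower} bound for ordinary rigid cohomology, while the Lefschetz trace formula and the Euler-product estimates live in compactly supported cohomology and naturally yield \emph{upper} bounds there. The stated form is deduced by Poincar\'e duality on smooth $X$ (available by \cite{kedlaya-finiteness}), which your sketch never invokes; without it there is no route from pointwise lower bounds to cohomological lower bounds. Relatedly, your Leray/fibration induction requires more than finite-dimensionality of the $E_2$-terms (Theorem~\ref{T:finite dimensional}): one must know that the higher direct images along the curve fibration are, at least generically, again overconvergent $F$-isocrystals satisfying base change --- the generic pushforward results of \cite{kedlaya-finiteness} --- before the inductive hypothesis can even be applied to them, and the excision/d\'evissage needed to handle the nongeneric locus again works most cleanly with compact supports.
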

\begin{proof}
We may reduce to the case $\calE \in \FIsoc^\dagger(X)$,
for which see \cite[Theorem~5.3.2]{kedlaya-weil2}.
(The latter statement also includes a version for cohomology with compact supports, applicable without requiring $X$ to be smooth.)
\end{proof}

\begin{cor} \label{C:weight split}
Let 
\[
0 \to \calE_1 \to \calE \to \calE_2 \to 0
\]
be an exact sequence
in $\FIsoc^\dagger(X) \otimes \overline{\QQ}_p$ in which $\calE_i$ is $\iota$-pure of weight $w_i$,
$w_1 \neq w_2$, and $w_2 < w_1 + 1$. (In particular, these conditions hold if $w_2 < w_1$.) Then this sequence splits in $\FIsoc^\dagger(X) \otimes \overline{\QQ}_p$.
\end{cor}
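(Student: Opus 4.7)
The plan is to identify the given extension with a class in $\Ext^1(\calE_2, \calE_1)$ computed in $\FIsoc^\dagger(X) \otimes \overline{\QQ}_p$, and to show that this group vanishes by applying Theorem~\ref{T:weil2} to the internal Hom. Set $\mathcal{H} := \calE_2^\vee \otimes \calE_1$, an object of $\FIsoc^\dagger(X) \otimes \overline{\QQ}_p$ that is $\iota$-pure of weight $w := w_1 - w_2$.

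The Ext group in the category of overconvergent $F$-isocrystals can be computed from rigid cohomology of the internal Hom, with the Frobenius structure playing the role played by the Galois action in the $\ell$-adic analogue. Concretely, the long exact sequence associated to $F - 1$ acting on the complex of rigid cochains of $\mathcal{H}$ yields the short exact sequence
\[
0 \to H^0_{\rig}(X, \mathcal{H})/(F-1)H^0_{\rig}(X, \mathcal{H}) \to \Ext^1(\calE_2, \calE_1) \to \ker\bigl(F - 1 \mid H^1_{\rig}(X, \mathcal{H})\bigr) \to 0,
\]
so it suffices to show both outer terms vanish.

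For the right-hand term, Theorem~\ref{T:weil2} applied to the pure (hence mixed of weights $\geq w$) object $\mathcal{H}$ gives that $H^1_{\rig}(X, \mathcal{H})$ is $\iota$-mixed of weights $\geq w + 1 = w_1 - w_2 + 1$, which is strictly positive by the hypothesis $w_2 < w_1 + 1$. Every Frobenius eigenvalue $\alpha$ therefore satisfies $|\iota(\alpha)| > 1$, so none equals $1$, and the kernel of $F - 1$ vanishes. For the left-hand term, one checks directly that $H^0_{\rig}(X, \mathcal{H})$ is either zero or $\iota$-pure of weight $w$: a nonzero Frobenius eigenvector in $H^0$ with eigenvalue $\lambda$ must be nonzero at some closed point $x$ (with residue field $\FF_{q^n}$), where purity of $\mathcal{H}$ pins down $|\iota(\lambda)| = q^{w/2}$. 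Since $w_1 \neq w_2$ we have $w \neq 0$, so no eigenvalue equals $1$; hence $F - 1$ is invertible on $H^0_{\rig}(X, \mathcal{H})$ and its cokernel is zero.

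Combining the two vanishings with the displayed short exact sequence gives $\Ext^1(\calE_2, \calE_1) = 0$, so the given sequence splits. The main technical point that needs explicit reference is the short exact sequence computing $\Ext^1$ in $F$-isocrystals via Frobenius (co)invariants on rigid cohomology; this is formal from the description of the category as ``$F$-fixed points'' on the category of isocrystals without Frobenius structure, and can be cited to a standard reference such as \cite{kedlaya-weil2}. Everything else reduces to the weight estimate of Theorem~\ref{T:weil2} and a comparison of complex absolute values.
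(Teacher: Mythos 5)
Your proposal is correct and follows essentially the same route as the paper: both identify $\Ext^1_{\FIsoc^\dagger(X)}(\calE_2, \calE_1)$ with an extension of (a subgroup of) the Frobenius invariants of $H^1_{\rig}(X, \calE_2^\dual \otimes \calE_1)$ by the Frobenius coinvariants of $H^0_{\rig}$, kill the $H^1$-term using Theorem~\ref{T:weil2} and the hypothesis $w_2 < w_1 + 1$, and kill the $H^0$-term using purity of nonzero weight $w_1 - w_2$. The only cosmetic differences are that you assert surjectivity onto $\ker(F-1 \mid H^1_{\rig})$ (which the paper does not claim and neither argument needs) and that you spell out the purity of $H^0_{\rig}$ via evaluation at a closed point, a detail the paper leaves implicit.
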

\begin{proof}
We reduce formally to the case of an exact sequence in $\FIsoc^\dagger(X)$.
We have the following exact sequence of Hochschild-Serre type:
\[
0 \to H^0_{\rig}(X, \calE_2^\dual \otimes \calE_1)_F \to \Ext^1_{\FIsoc^\dagger(X)}(\calE_2, \calE_1) \to H^1_{\rig}(X, \calE_2^\dual \otimes \calE_1)^F.
\]
In this sequence, $H^0_{\rig}(X, \calE_2^\dual \otimes \calE_1)$ is finite-dimensional and $\iota$-pure of weight $w_1 - w_2 \neq 0$, so its Frobenius coinvariants are trivial.
Meanwhile, by Theorem~\ref{T:weil2}, $H^1_{\rig}(X, \calE_2^\dual \otimes \calE_1)$ is $\iota$-mixed of weights $\geq w_1 - w_2 + 1 > 0$, so its Frobenius invariants are also trivial.
\end{proof}

\begin{cor}[Abe--Caro] \label{C:weight filtration1}
Any $\calE\in \FIsoc^\dagger(X) \otimes \overline{\QQ}_p$ which is $\iota$-mixed admits a unique filtration
\[
0 = \calE_0 \subset \cdots \subset \calE_l = \calE
\]
such that each successive quotient $\calE_i/\calE_{i-1}$ is $\iota$-pure of some weight $w_i$, and $w_1 < \cdots < w_l$. We call this the \emph{weight filtration} of $\calE$.
\end{cor}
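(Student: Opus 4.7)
The plan is to derive existence by bubble-sorting any given filtration with pure successive quotients via Corollary~\ref{C:weight split}, and to derive uniqueness from the vanishing of $\Hom$ from a pure object to a mixed object of strictly larger weights, which in turn follows from Theorem~\ref{T:weil2}.

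For existence, by the definition of $\iota$-mixed, $\calE$ admits some filtration $0 = \calG_0 \subset \calG_1 \subset \cdots \subset \calG_m = \calE$ with $\calG_i/\calG_{i-1}$ $\iota$-pure of weight $u_i$. Whenever a pair of consecutive weights satisfies $u_i > u_{i+1}$, the subquotient $\calG_{i+1}/\calG_{i-1}$ is an extension of the pure-of-weight-$u_{i+1}$ object $\calG_{i+1}/\calG_i$ by the pure-of-weight-$u_i$ object $\calG_i/\calG_{i-1}$, and Corollary~\ref{C:weight split} (applied with $w_1 = u_i$ and $w_2 = u_{i+1}$; the hypothesis $w_2 < w_1 + 1$ is automatic since $w_2 < w_1$) says that this extension splits. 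Taking the preimage in $\calG_{i+1}$ of the weight-$u_{i+1}$ summand produces a submodule $\calG'_i$ with $\calG_{i-1} \subset \calG'_i \subset \calG_{i+1}$ giving a new filtration with $u_i$ and $u_{i+1}$ transposed. Iterating (classical bubble sort) yields after finitely many steps a filtration with $u_1 \leq u_2 \leq \cdots \leq u_m$, after which equal consecutive weights may be merged without loss, since an extension of two pure-of-weight-$w$ objects has all linearized Frobenius eigenvalues of absolute value $q^{w/2}$ at every closed point and is therefore pure of weight $w$. This produces the sought filtration.

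For uniqueness, the key input is the vanishing lemma: if $\calA \in \FIsoc^\dagger(X) \otimes \overline{\QQ}_p$ is $\iota$-pure of weight $u$ and $\calB$ is $\iota$-mixed of weights $>u$, then $\Hom(\calA, \calB) = 0$. Indeed, this Hom-set identifies with the Frobenius-fixed part of $H^0_{\rig}(X, \calA^\dual \otimes \calB)$; the coefficient $\calA^\dual \otimes \calB$ is $\iota$-mixed of weights $>0$, so by Theorem~\ref{T:weil2} (with $i=0$) the space $H^0_{\rig}(X, \calA^\dual \otimes \calB)$ is again $\iota$-mixed of weights $>0$, and its Frobenius-fixed subspace (necessarily of weight $0$) vanishes. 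Now given two filtrations $\calE_\bullet$ and $\calE'_\bullet$ of the stated form, the multiset of weights $\{w_i\}$ with multiplicities $\rank(\calE_i/\calE_{i-1})$ may be recovered from the absolute values of the linearized Frobenius eigenvalues at any closed point, hence is intrinsic to $\calE$; in particular the two strictly increasing sequences $(w_i)$ and $(w'_i)$ coincide. The composite $\calE_1 \hookrightarrow \calE \twoheadrightarrow \calE/\calE'_1$ sends a pure object of weight $w_1$ into a mixed object of weights $>w_1$, so it vanishes by the lemma, giving $\calE_1 \subset \calE'_1$; by symmetry $\calE_1 = \calE'_1$, and induction on $l$ applied to $\calE/\calE_1$ concludes.

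The only nontrivial obstacle is the vanishing lemma underlying uniqueness, but this reduces to a one-line computation once Theorem~\ref{T:weil2} is granted; the existence argument is otherwise entirely formal from Corollary~\ref{C:weight split}.
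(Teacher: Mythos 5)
Your proof is correct and is, in effect, the unpacking of what the paper dismisses as ``immediate from Corollary~\ref{C:weight split}'': existence follows by bubble-sorting an arbitrary pure-subquotient filtration using Corollary~\ref{C:weight split} to transpose out-of-order adjacent subquotients (noting, as you do, that the condition $w_2 < w_1 + 1$ is automatic when $w_2 < w_1$), and uniqueness follows from the vanishing of $\Hom$ from a pure object into a mixed object of strictly larger weights, which you correctly extract as the $H^0$-piece of Theorem~\ref{T:weil2} (the same input that drives the proof of Corollary~\ref{C:weight split} itself). So this is the same circle of ideas as the paper's one-line proof, just spelled out; the paper also cites Abe--Caro for a genuinely independent derivation via $\mathscr{D}$-modules, which is a different route you do not take.
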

\begin{proof}
This is immediate from Corollary~\ref{C:weight split}. For an independent derivation
(and an extension to complexes), see \cite[Theorem~4.3.4]{abe-caro}.
\end{proof}

\begin{remark} \label{R:weight split equal case}
In Corollary~\ref{C:weight split}, half of the proof applies in the case $w_1 = w_2$:
the extension class in $H^1_{\rig}(X, \calE_2^\dual \otimes \calE_1)^F$ still vanishes.
We thus still get a splitting in the category of overconvergent isocrystals without Frobenius structures; consequently, any $\iota$-pure object in $\FIsoc^\dagger(X) \otimes \overline{\QQ}_p$ becomes semisimple in the category of overconvergent isocrystals without Frobenius structure. 
\end{remark}

\begin{remark}
While the proof of Theorem~\ref{T:weil2} draws many elements from Deligne's original arguments in \cite{deligne-weil2}, in overall form it more closely resembles the stationary phase method of Laumon \cite{laumon}, and
even more closely the exposition of Katz
\cite{katz-weil2} which makes some minor simplifications to Laumon's treatment.
In fact, translating the arguments from \cite{kedlaya-weil2} back to the $\ell$-adic side would yield an argument differing slightly even from \cite{katz-weil2}.

One pleasing feature of the $p$-adic approach is that the $\ell$-adic Fourier transform analogizes to a Fourier transform on some sort of $\mathscr{D}$-modules on the affine line, which is genuinely constructed by interchanging terms in a Weyl algebra. This point of view was originally developed by Huyghe \cite{huyghe}, and is maintained in \cite{kedlaya-weil2}.
\end{remark}

The following is analogous to a statement in the $\ell$-adic case which is a consequence of the Chebotarev density theorem; however, here one must instead make an argument using weights. 
\begin{theorem}[Tsuzuki] \label{T:chebotarev}
Suppose that $\calE_1, \calE_2 \in \FIsoc^\dagger(X) \otimes \overline{\QQ}_p$ are $\iota$-mixed and have the same set of Frobenius eigenvalues at each closed point $x \in X$. Then $\calE_1, \calE_2$ have the same semisimplification in $\FIsoc^\dagger(X) \otimes \overline{\QQ}_p$.
\end{theorem}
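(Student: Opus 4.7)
The plan is to reduce, via the weight filtration of Corollary~\ref{C:weight filtration1}, to the case where $\calE_1, \calE_2$ are semisimple and $\iota$-pure of a common weight, and then to extract the simple-constituent multiplicities from the Frobenius data by a weight-theoretic substitute for Chebotarev density.

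First I would replace each $\calE_i$ by its semisimplification, which is harmless since semisimplification preserves Frobenius eigenvalues at every closed point; then I would decompose $\calE_i^{\mathrm{ss}}$ as $\bigoplus_w \calE_i^{(w)}$ with $\calE_i^{(w)}$ $\iota$-pure of weight $w$ using Corollary~\ref{C:weight filtration1}. At a closed point $x$ of residue field $\FF_{q^n}$, the weight $w$ attached to a Frobenius eigenvalue $\alpha$ is read off from $|\iota(\alpha)| = q^{nw/2}$, so the equality of the Frobenius-eigenvalue multisets of $\calE_1$ and $\calE_2$ at every closed point forces the analogous equality for $\calE_1^{(w)}$ and $\calE_2^{(w)}$ separately in each weight. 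This reduces the problem to the case in which $\calE_1$ and $\calE_2$ are semisimple and $\iota$-pure of a single common weight $w$.

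Next I would write $\calE_i \cong \bigoplus_j m_{ij} V_j$ with the $V_j$ pairwise non-isomorphic simples $\iota$-pure of weight $w$, and set $d_j := \dim_{\overline{\QQ}_p} \End(V_j)$. By Schur's lemma $\dim \Hom(\calE_a, \calE_b) = \sum_j m_{aj} m_{bj} d_j$, so the equalities $\dim \Hom(\calE_1, \calE_1) = \dim \Hom(\calE_1, \calE_2) = \dim \Hom(\calE_2, \calE_2)$ would imply $\sum_j d_j (m_{1j} - m_{2j})^2 = 0$ and hence $m_{1j} = m_{2j}$ for all $j$. Since $\Hom(\calE_a, \calE_b) \cong H^0_{\rig}(X, \calE_a^\dual \otimes \calE_b)^{F=1}$, and since the internal Hom $\calG_{ab} := \calE_a^\dual \otimes \calE_b$ is $\iota$-pure of weight $0$ and has, by hypothesis, the same multiset of Frobenius eigenvalues at each closed point independently of $(a,b) \in \{1,2\}^2$, it suffices to establish the following claim: for $\calG$ $\iota$-pure of weight $0$, $\dim H^0_{\rig}(X, \calG)^{F=1}$ depends only on the multiset of Frobenius eigenvalues of $\calG$ at the closed points of $X$.

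This last claim is where I expect the main obstacle to lie; it is the step that has no $\ell$-adic counterpart outside of Chebotarev, and the point at which weights enter essentially. By Theorem~\ref{T:weil2} applied to $\calG$, each $H^i_{\rig}(X, \calG)$ is $\iota$-mixed of weights $\geq i$, so its $F=1$ eigenspace (which is of weight $0$) vanishes for $i > 0$ and $H^\ast_{\rig}(X, \calG)^{F=1}$ is concentrated in degree $0$. Combining this vanishing with a form of Poincar\'e duality for rigid cohomology on the smooth variety $X$ (to translate $H^0_{\rig}(X, \calG)^{F=1}$ into a suitable $F$-equivariant summand of $H^{2\dim X}_{\rig,c}(X, \calG^\dual(\dim X))$) and with the Lefschetz trace formula for rigid cohomology with compact supports, one can express $\dim H^0_{\rig}(X, \calG)^{F=1}$ as a function of the pointwise Frobenius eigenvalues of $\calG$. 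The delicate point is ensuring that weight-$0$ contributions in $H^i_{\rig,c}$ for $i > 0$ do not interfere with the count; this is exactly the role of the Weil~II bound of Theorem~\ref{T:weil2} as a $p$-adic substitute for Chebotarev density.
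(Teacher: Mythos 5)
The paper's own proof is a one-line citation to \cite[Proposition~A.3.1]{abe-companion}, so the comparison is really with Abe's argument; your overall strategy (reduce via the weight filtration to $\calE_1,\calE_2$ semisimple and $\iota$-pure of a single weight, pass to the internal Homs, turn the question into one about $\dim\Hom$, and use the Weil~II bound of Theorem~\ref{T:weil2} as the $p$-adic stand-in for Chebotarev) is in the right spirit. Your first three paragraphs, including the observation that all four internal Homs $\calG_{ab}=\calE_a^\dual\otimes\calE_b$ share the same pointwise Frobenius-eigenvalue data, are correct.

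The gap is in the last step. The Lefschetz trace formula with compact supports together with the Weil~II bound (which isolates the weight-$0$ contribution inside $H^{2\dim X}_{\rig,c}(X,\calG^\dual(\dim X))$) determines the order of pole of the $L$-function at $T=1$, hence the \emph{generalized} $F=1$ eigenvalue multiplicity on $H^{2\dim X}_{\rig,c}$, equivalently on $H^0_{\rig}(X,\calG)$. But $\Hom_{\FIsoc^\dagger(X)\otimes\overline{\QQ}_p}(\calE_a,\calE_b)=H^0_{\rig}(X,\calG_{ab})^{F=1}$ is the honest eigenspace, and these two dimensions coincide only if Frobenius acts semisimply on the weight-$0$ part of $H^0_{\rig}(X,\calG_{ab})$; you do not address this Frobenius-semisimplicity, and it is not automatic. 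The quadratic-form step then breaks, since you only get upper bounds $\dim\Hom(\calE_a,\calE_b)\leq N$ with $N$ the common generalized multiplicity, not equalities. The cleanest repair avoids the quadratic form entirely: a nonzero generalized eigenspace always contains a genuine eigenvector, so equality of the generalized $F=1$ multiplicities for $\calG_{11}$ and $\calG_{12}$, together with $\Hom(\calE_1,\calE_1)\neq 0$, already yields a nonzero morphism $\calE_1\to\calE_2$; semisimplicity of both sides splits off a common simple summand, the residual pieces still have matching pointwise eigenvalue data, and one concludes by induction on $\rank\calE_1$.
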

\begin{proof}
We reproduce the argument given in \cite[Proposition~A.4.1]{abe-companion}.
We may assume that $X$ is irreducible, and hence of some pure dimension $d$.
By Corollary~\ref{C:weight filtration1}, we may assume that $\calE_1, \calE_2$ are both $\iota$-pure, necessarily of the same weight $w$. For any irreducible $\calF \in \FIsoc^\dagger(X) \otimes \overline{\QQ}_p$,
we have $L(\calE_1 \otimes \calF^\dual, T) = L(\calE_2 \otimes \calF^\dual, T)$.
Combining Theorem~\ref{T:trace formula} with Theorem~\ref{T:weil2}, we see that for $j=1,2$, 
the pole order of $L(\calE_j \otimes \calF^\dual, T)$ at $T = q^{-d}$ equals
$\dim_{\overline{\QQ}_p} H^0_{\rig}(X, \calE_j \otimes \calF^\dual)^F$
(the factors in \eqref{eq:trace formula} with $i>0$ only contribute zeroes and poles in the region
$|T| \geq q^{-d+1/2}$).
The latter equals the multiplicity of $\calF$ as a constituent of $\calE_j$, so these agree for $j=1,2$ for all $\calF$; this proves the claim.
\end{proof}

By analogy with Deligne's equidistribution theorem,
one has an equidistribution theorem for Frobenius conjugacy classes
in rigid cohomology; this was described explicitly by Crew in the case where 
$\dim(X) = 1$ \cite[Theorem~10.11]{crew-finite}, but in light of the general theory of weights, one can adapt the proof of \cite[Th\'eor\`eme~3.5.3]{deligne-weil2}
to arbitrary $X$.

\begin{defn} \label{D:equidistribution}
Suppose that $X$ is connected and $\calE \in \FIsoc^\dagger(X) \otimes \overline{\QQ}_p$ is semisimple.
Set notation as in Definition~\ref{D:geometric monodromy} (with respect to some closed point $x \in X^\circ$).
Using the map $\iota$ to perform a base extension on the sequence \eqref{eq:Weil group}, as in \cite[\S 5]{crew-mono} we obtain an exact sequence
\[
1 \to G_\CC \to W_\CC \stackrel{\deg}{\to} \ZZ \to 1
\]
of affine $\CC$-groups.
There is a subgroup $W_\RR \subseteq W_\CC$ projecting onto $\ZZ$ such that
$G_\CC \cap W_\RR$ is a maximal compact subgroup of $G_\CC$ 
\cite[2.2.1]{deligne-weil2}. The conjugacy classes of $W_\RR$ are the intersections with $W_\RR$ of the conjugacy classes of $W_\CC$.

Choose any element $z$ of the center of $W_\RR$ of positive degree (see Remark~\ref{R:finite base extension} and Corollary~\ref{C:center element positive degree}).
Let $\mu_0$ be the measure on $W_\RR$ obtained as the product of Haar measure (normalized so $G_\RR$ has measure $1$) with the characteristic function of the set of elements of positive degree.

Let $W_\RR^\natural$ denote the space of conjugacy classes of $W_\RR$ equipped with the quotient topology. For any measure $\mu$ on $W_\RR$, let $\mu^\natural$ denote its image on $W_\RR^\natural$. For $n \in \ZZ$, let $W_n^\natural$ denote the set of classes in $W_\RR^\natural$ of degree $n$.

Suppose now that $\calE$ is $\iota$-mixed. 
As in \cite[2.2.6]{deligne-weil2}, for each closed point $x \in X$, 
we can find an element $g_x \in W_\RR$ conjugate in $W_\CC$ to a semisimplification of $\iota(\Frob_x)$.
Let $\mu$ be the measure on $W_\RR$ given by
\[
\mu = \sum_x  \deg(x) \sum_{n=1}^\infty q^{-n \deg(x)} \delta(g^n_x),
\]
where $\delta$ denotes a Dirac point measure.
\end{defn}

\begin{theorem} \label{T:equidistribution}
Suppose that $X$ is connected and $\calE \in \FIsoc^\dagger(X) \otimes \overline{\QQ}_p$ is semisimple and $\iota$-mixed. 
Then for any $i \in \ZZ$, in measure we have
\[
\lim_{n \to \infty} z^{-n} \mu^\natural|_{W^\natural_{i + n \deg(z)}} = \mu_0^\natural|_{W^\natural_i}.
\]
\end{theorem}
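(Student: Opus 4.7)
The strategy is to adapt Deligne's proof of \cite[Th\'eor\`eme~3.5.3]{deligne-weil2} to the $p$-adic setting, using the Tannakian description of $G_\CC$ from \cite{crew-mono}, the Grothendieck--Lefschetz trace formula for rigid cohomology with compact supports, and the weights theorem (Theorem~\ref{T:weil2}). Weak convergence of measures on $G_\RR^\natural$ can be tested against continuous class functions of compact support; since the degree-$i$ component of $G_\RR^\natural$ is a single coset of the space of conjugacy classes of $K := G_\RR \cap G^\circ_\CC$ (a maximal compact subgroup of $G^\circ_\CC$), the Peter--Weyl theorem reduces the problem to verifying the stated limit against characters $\chi_\rho$ of finite-dimensional continuous complex representations $\rho$ of $K$, which extend to algebraic representations of $G_\CC$.

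\textbf{Tannakian translation and Lefschetz.} By construction, each such $\rho$ corresponds to an object $\calF_\rho \in \FIsoc^\dagger(X) \otimes \overline{\QQ}_p$ lying in the tensor subcategory generated by $\calE$ and $\calE^\dual$; it inherits $\iota$-mixedness from $\calE$, and one has $\mathrm{tr}\,\rho(g_x^n) = \mathrm{tr}(F_x^n \mid (\calF_\rho)_x)$ at every closed point $x$. Pairing $\chi_\rho$ with $z^{-N}\mu^\natural|_{G^\natural_{i+N\deg(z)}}$ amounts to extracting the degree-$(i + N\deg(z))$ coefficient, twisted by the eigenvalues of $\rho(z)^{-N}$, from the generating series
\[
T \tfrac{d}{dT}\log L(X, \calF_\rho, T) = \sum_x \deg(x) \sum_{n \geq 1} q^{-n \deg(x)} T^{n\deg(x)} \mathrm{tr}(F_x^n \mid (\calF_\rho)_x).
\]
By the Grothendieck--Lefschetz trace formula for rigid cohomology with compact supports (\cite{etesse-lestum}, \cite[(2.1.2)]{kedlaya-weil2}), $L(X, \calF_\rho, T)$ is the alternating product of characteristic polynomials of Frobenius on the finite-dimensional groups $H^j_{\rig,c}(X,\calF_\rho)$, so the whole pairing unfolds into a finite alternating sum over their Frobenius eigenvalues.

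\textbf{Weights and the limit.} The compact-support form of Theorem~\ref{T:weil2} controls the $\iota$-absolute values of the Frobenius eigenvalues on $H^j_{\rig,c}(X, \calF_\rho)$ in terms of the weights of $\calF_\rho$, shifted by $j$; similarly, the eigenvalues of $\rho(z)$ on each isotypic piece of $V$ have $\iota$-absolute values governed by those weights. After the $z^{-N}$ normalization, each eigenvalue pair whose $\iota$-absolute values do not cancel contributes a term of size $O(q^{-\epsilon N})$ for some $\epsilon > 0$, leaving in the limit only the weight-zero, $z$-invariant part. By Schur orthogonality on $K$, this surviving contribution is exactly $\int \chi_\rho \, d\mu_0^\natural|_{G^\natural_i}$, which establishes the equality on a uniformly dense family of test functions and hence the claimed convergence in measure.

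\textbf{Main obstacle.} The principal technical difficulty is the careful bookkeeping of eigenvalue cancellations under the $z^{-N}$ renormalization: Theorem~\ref{T:weil2} provides the essential archimedean control on weights, but one must combine it with precise estimates on the sizes of the $\rho(z)$-eigenvalues (which depend on the weight decomposition of $V$ as a $G_\CC$-module) to obtain a uniform geometric decay rate. A secondary subtlety is that $G_\CC$ is not a priori reductive, so Peter--Weyl strictly applies only to its reductive quotient; however, by Remark~\ref{R:weight split equal case} the semisimplification of $\calF_\rho$ (viewed without its Frobenius structure) suffices here, and by Theorem~\ref{T:chebotarev} that semisimplification is already determined by Frobenius traces at closed points, so nothing is lost.
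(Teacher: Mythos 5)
Your proposal is correct and follows the same route as the paper: both transpose Deligne's proof of Th\'eor\`eme~3.5.3 of Weil~II to the $p$-adic setting by substituting the compact-support version of Theorem~\ref{T:weil2} for Deligne's Corollaire~3.3.4, with the Lefschetz trace formula for rigid cohomology with compact supports and Crew's Tannakian setup supplying the remaining ingredients. The paper simply states that Deligne's argument ``applies unchanged'' given these inputs, whereas you have spelled out the stages of that argument (Peter--Weyl reduction, $L$-function unfolding, weight cancellation) in more detail.
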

\begin{proof}
In light of Theorem~\ref{T:weil2} 
(or more precisely, its version for cohomology with compact supports,
to stand in for \cite[Corollaire~3.3.4]{deligne-weil2}), the proof of
\cite[Th\'eor\`eme~3.5.3]{deligne-weil2} applies unchanged.
\end{proof}

\begin{remark}
In the $\ell$-adic case, a more precise version of the equidistribution theorem has been formulated by Ulmer
\cite{ulmer}, although with few details of the proof. A more thorough argument, which also covers the $p$-adic case,
has been given by Hartl--P\'al \cite{hartl-pal}; this for example implies Zariski density of Frobenius conjugacy classes in the arithmetic monodromy group \cite[Theorem~4.13]{pal}.
\end{remark}

\begin{remark}
In the $\ell$-adic setting, one can refine the construction of local monodromy representations for lisse sheaves on a curve
(Remark~\ref{R:local mono rep}) to obtain \emph{local $\epsilon$-factors} which multiply together to give the global $\epsilon$-factor arising in the functional equation for the $L$-function; this was originally proved by Laumon \cite{laumon} building on work of Langlands and Deligne. Laumon's work admits a parallel version in the $p$-adic case,
as shown by Abe--Marmora \cite{abe-marmora}.
\end{remark}

\begin{remark}
One can also associate $L$-functions to convergent $F$-isocrystals, but the construction carries only $p$-adic analytic meaning; there is no theory of weights for such objects.
See
for example \cite{wan}.
\end{remark}

\section{A remark on constructible coefficients}
\label{sec:constructible}

To get any further in the study of rigid cohomology, one needs an analogue not just of lisse \'etale sheaves, but also constructible \'etale sheaves. Berthelot originally proposed a theory of \emph{arithmetic $\mathscr{D}$-modules} for this purpose
\cite{berthelot-d-modules}, and conjectured that \emph{holonomic} objects in this theory 
(equipped with Frobenius structure) are stable under the six operations formalism.
This result remains unknown, partly because the definition of holonomicity is itself a bit subtle; for instance, a direct arithmetic analogue of Bernstein's inequality fails, so one must use Frobenius descent to correct it.

In the interim, a modified definition of \emph{overholonomic} arithmetic $\mathscr{D}$-modules has been given by Caro \cite{caro-overholonomic}, as a way to formally salvage the six operations formalism. Of course, this provides little benefit unless one can prove that this category contains the overconvergent $F$-isocrystals as a full subcategory; fortunately, this is known thanks to a difficult theorem of Caro--Tsuzuki \cite{caro-tsuzuki} (whose proof makes essential use of Theorem~\ref{T:semistable}).
The theory of weights in Caro's formalism is developed in \cite{abe-caro}.

Recently, Le Stum has given a site-theoretic construction of overconvergent $F$-isocrystals
\cite{lestum} and proposed a theory of \emph{constructible isocrystals} \cite{lestum-constructible}. It is hoped that this again yields a six operations formalism, with somewhat less technical baggage required than in Caro's approach.

In any case, using arithmetic $\mathscr{D}$-modules, Abe \cite{abe-companion} has recently succeeded in porting L. Lafforgue's proof of the Langlands correspondence for $\GL_n$ over a function field 
\cite{lafforgue} into $p$-adic cohomology;
this immediately resolves Deligne's conjecture on crystalline companions \cite[Conjecture~1.2.10]{deligne-weil2} in dimension 1, and
ultimately leads to corresponding results in higher dimension. 
(Note that this requires working not just on schemes, but on certain algebraic stacks.)
See \cite{kedlaya-companions, kedlaya-companions2} for further discussion.

A related point is that the category of arithmetic $\mathscr{D}$-modules satisfies descent with respect to proper hypercoverings (as then does the category of overconvergent $F$-isocrystals). See \cite[\S 3]{abe-nearby}.

It is expected that one can similarly port V. Lafforgue's construction of (one direction of) the Langlands correspondence for any reductive group over a function field \cite{lafforgue-langlands}
into $p$-adic cohomology. This requires an adaptation of \emph{Drinfeld's lemma}, on products of fundamental groups in characteristic $p$,
 for both overconvergent $F$-isocrystals and arithmetic $\mathscr{D}$-modules. For discussion of the former, see \cite{kedlaya-drinfeld-lemma}.

\section{Further reading}
\label{sec:further reading}

We conclude with some suggestions for additional reading, in addition to the references already cited.
\begin{itemize}
\item
Berthelot's first sketch of the theory of rigid cohomology is the article 
\cite{berthelot-mem}; while quite dated, it remains a wonderfully readable introduction to the circle of ideas underpinning the subject.

\item
In \cite{kedlaya-aws}, there is a discussion of $p$-adic cohomology oriented towards machine computations, especially of zeta functions.

\item
In \cite{kedlaya-ams}, some discussion is given of how recent (circa 2009) results in rigid cohomology tie back to older results in crystalline cohomology.
\end{itemize}

\appendix

\section{Separation of slopes}

In this appendix, we record an alternate approach to Theorem~\ref{T:transversality} in the case $\calE \in \FIsoc(X)$ based on reduction to the local model statement, which is an unpublished result from the author's PhD thesis \cite[Theorem~5.2.1]{kedlaya-thesis}.
\begin{lemma} \label{L:transversality1}
Let
\[
0 \to \calE_1 \to \calE \to \calE_2 \to 0
\]
be a short exact sequence in $\FIsoc(k((t)))$ 
with $\calE_i$ isoclinic of slope $s_i$ and $s_2 - s_1 > 1$.
Then this sequence splits uniquely.
\end{lemma}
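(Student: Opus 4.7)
Uniqueness is immediate: any two splittings $s, s' : \calE_2 \to \calE$ differ by a morphism $\calE_2 \to \calE_1$ in $\FIsoc(k((t)))$, and such a morphism vanishes because $\calE_1$ and $\calE_2$ are isoclinic of distinct slopes. To verify the Hom-vanishing, base change along the faithfully flat map $\Gamma[p^{-1}] \to \Gamma^{\perf}[p^{-1}]$ and apply the Dieudonn\'e--Manin classification, which forces morphisms between isoclinic $\sigma$-modules of distinct slopes to be zero.

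For existence I would follow a Kodaira--Spencer template analogous to Remark~\ref{R:filtration no connection}, separating the Frobenius and connection aspects of the problem. The first step is to produce a $\Gamma[p^{-1}]$-linear, $\sigma$-equivariant retraction $\pi : \calE \to \calE_1$. The second step is to verify that $\pi$ automatically respects the connection by analyzing the obstruction $\delta := \nabla \circ \pi - (\pi \otimes 1) \circ \nabla$, which vanishes on $\calE_1$ and hence factors through a map $\delta : \calE_2 \to \calE_1 \otimes \Omega^1_{k((t))/K}$. Using the compatibility $\nabla \sigma = pt^{p-1}\sigma\nabla$, one checks that $\delta$ is $\sigma$-equivariant when $\Omega^1$ carries its standard Frobenius action, which is pure of slope $1$ since $\sigma(dt) = pt^{p-1}\,dt$. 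Now $\calE_2$ is isoclinic of slope $s_2$ and $\calE_1 \otimes \Omega^1$ is isoclinic of slope $s_1 + 1$, and the hypothesis $s_2 - s_1 > 1$ forces $s_2 \ne s_1 + 1$; the same Hom-vanishing used in the uniqueness step then makes $\delta = 0$, so $\pi$ is a splitting in $\FIsoc(k((t)))$.

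The main obstacle is the first step, producing the $\sigma$-equivariant retraction in $\Gamma[p^{-1}]$ rather than only in a Robba-type enlargement such as $\calR$. Over $\calR$ the $\sigma$-module extension splits easily for any positive slope gap, by a Frobenius iteration whose convergence is a $t$-adic (rigid-analytic) phenomenon; over $\Gamma[p^{-1}]$ this is not automatic, since that iteration can produce coefficients with $p$-adic valuation tending to $-\infty$. The connection-compatibility imposed by the $F$-isocrystal structure on $\calE$ cuts the space of possible extensions down to those for which the iteration does converge $p$-adically in $\Gamma[p^{-1}]$, and the strict inequality $s_2 - s_1 > 1$ (not merely $s_2 \ne s_1$) is precisely what makes this quantitative statement go through; at the boundary case $s_2 - s_1 = 1$ the slopes of $\calE_2$ and $\calE_1 \otimes \Omega^1$ coincide, the Kodaira--Spencer obstruction need not vanish, and the splitting can genuinely fail.
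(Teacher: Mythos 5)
Your uniqueness argument is correct: any two splittings differ by a morphism $\calE_2 \to \calE_1$, which vanishes because $\calE_1, \calE_2$ are isoclinic of distinct slopes. The existence argument, however, has a genuine gap at exactly the point you flag, and it is not a gap that can be filled without changing the strategy: your two-step decomposition (first find a $\sigma$-equivariant retraction $\pi\colon \calE \to \calE_1$ over $\Gamma[p^{-1}]$, then check via Kodaira--Spencer that $\pi$ is horizontal) is not achievable, because Step~1 cannot be carried out separately from the connection. The obstruction is not $p$-adic divergence of coefficients, as you suggest, but the more basic fact that $\sigma\colon t \mapsto t^p$ is not bijective on $\Gamma$, so the contracting iteration $\sum_n \sigma^{-n}(\bv_1)$ suggested by the slope hypothesis (which does converge $p$-adically once $s_1 < 0$, after normalizing $s_2 = 0$) cannot even be written down over $\Gamma$. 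It only makes sense over the perfection $\Gamma^{\perf}$, where one obtains a $\sigma$-equivariant splitting over $\Gamma^{\perf(-s_1)}[p^{-1}]$ but not \emph{a priori} over $\Gamma[p^{-1}]$. The paper then uses the connection in the opposite order from what you propose: having produced the splitting vector $\bv$ over the perfected ring, it derives $\frac{d}{dt}(\bv) = \bv_2$, decomposes $\Gamma^{\perf}[p^{-1}]$ as a completed direct sum of the pieces $t^\alpha \Gamma[p^{-1}]$ with $\alpha \in \ZZ[p^{-1}] \cap [0,1)$, and uses the fact that the fractional $\alpha$ have negative $p$-adic valuation to kill every component with $\alpha \neq 0$. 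So the connection serves to \emph{descend} the Frobenius splitting from $\Gamma^{\perf}$ to $\Gamma$, rather than to certify that an already-constructed Frobenius splitting over $\Gamma$ is horizontal.

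Relatedly, you misdiagnose where the strict inequality $s_2 - s_1 > 1$ enters. The Kodaira--Spencer obstruction you describe lies in $\Hom(\calE_2, \calE_1 \otimes \Omega^1)$ and vanishes whenever $s_2 \neq s_1 + 1$, a weaker hypothesis than the one in the lemma. In the actual proof, $s_1 < -1$ is needed at two points, both genuinely requiring the gap to exceed $1$: first, to ensure that $\bv$ lands in $\Gamma^{\perf(c)}[p^{-1}]$ with $c = -s_1 > 1$, the threshold for $\frac{d}{dt}$ to be defined as a map $\Gamma^{\perf(c)}[p^{-1}] \to \Gamma^{\perf(c-1)}[p^{-1}]$; and second, to make $pt^{p-1}\sigma - 1$ bijective on $\calE_1 \otimes \Gamma^{\perf}[p^{-1}]$, which is what pins down $\frac{d}{dt}(\bv) = \bv_2$.
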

\begin{proof}
Using internal Homs, we may reduce to treating the case where $\calE_2$ is trivial, and in particular $s_2 = 0$ and $s_1 < -1$.
The extension group $\Ext^1_{\FIsoc(k((t)))}(\calE_2, \calE_1)$ may then be computed as the first total cohomology group of the double complex
\[
\xymatrix{
\calE_1 \ar^-{d/dt}[r] \ar^{\sigma-1}[d] & \calE_1 \ar^{pt^{p-1} \sigma-1}[d] \\
\calE_1 \ar^-{d/dt}[r] & \calE_1 
}
\] 
where the top left entry is placed in degree 0. A 1-cocycle is a pair $(\bv_1, \bv_2) \in \calE_1 \times \calE_1$ with $\frac{d}{dt}(\bv_1) = (pt^{p-1} \sigma-1)(\bv_2)$, and a 1-coboundary is a pair for which there exists an element $\bv \in \calE_1$ with $(\sigma-1)(\bv) = \bv_1$, $\frac{d}{dt}(\bv) = \bv_2$.

For $c>0$, let $\Gamma^{\perf(c)}$ be the subring of $\Gamma^{\perf}$ consisting
of those $x$ for which for each $n \geq 0$, there exists $y_n \in \Gamma$ such that
$\sigma^{-n}(y_n) - x$ is divisible by $p^{\lfloor cn \rfloor}$. Note that for $c>1$, the operator $\frac{d}{dt}$ on $\Gamma$ extends to a well-defined map $\Gamma^{\perf(c)}[p^{-1}] \to \Gamma^{\perf(c-1)}[p^{-1}]$. 

Since $\calE_1$ is isoclinic of slope $s_1< -1$, we may define
\[
\bv = \sigma(1 + \sigma^{-1} + \sigma^{-2} + \cdots)(\bv_1) \in \calE_1 \otimes_{\Gamma[p^{-1}]} \Gamma^{\perf(-s_1)}[p^{-1}]
\]
via a convergent infinite series. By the previous paragraph, we may then form
$\frac{d}{dt}(\bv) \in \calE_1 \otimes_{\Gamma[p^{-1}]} \Gamma^{\perf(-s_1-1)}[p^{-1}]$,
which satisfies
\[
(pt^{p-1} \sigma - 1)\left( \frac{d}{dt}(\bv) - \bv_2 \right) = 0.
\]
Since $s_1 + 1 < 0$, $pt^{p-1} \sigma - 1$ is bijective on $\calE_1 \otimes_{\Gamma[p^{-1}]} \Gamma^{\perf}[p^{-1}]$, so this forces 
\begin{equation} \label{eq:transversality1}
\frac{d}{dt}(\bv) = \bv_2.
\end{equation}
It will now suffice to check that this equality forces $\bv \in \calE_1$.

To see this, write $\Gamma^{\perf}[p^{-1}]$ as a completed direct sum of
$t^\alpha \Gamma[p^{-1}]$ with $\alpha$ varying over $\ZZ[p^{-1}] \cap [0,1)$,
then split $\calE_1 \otimes_{\Gamma[p^{-1}]} \Gamma^{\perf}[p^{-1}]$ accordingly.
For each component $t^{\alpha} \bv_\alpha$ of $\bv$ with $\alpha \neq 0$,
\eqref{eq:transversality1} then implies $\frac{d}{dt}(t^{\alpha} \bv_\alpha) = 0$.

Now let $\Gamma^{\unr}$ be the completion of the maximal unramified extension of $\Gamma$;
the derivation $\frac{d}{dt}$ extends uniquely by continuity to $\Gamma^{\unr}$.
By a suitably precise form of Theorem~\ref{T:unit root1} (e.g., see \cite[Corollary~5.1.4]{tsuzuki-finite}), there exists a basis $\be_1,\dots,\be_m$ of $\calE_1 \otimes_{\Gamma[p^{-1}]} \Gamma^{\unr}[p^{-1}]$ such that $\frac{d}{dt}(\be_i) = 0$ for $i=1,\dots,n$. Writing $\bv_\alpha = \sum_{i=1}^m c_i \be_i$ with
$c_i \in \Gamma^{\unr}[p^{-1}]$, we have
\begin{equation} \label{eq:transversality2}
0 = \frac{d}{dt}(t^{\alpha} \bv_\alpha)
= \sum_{i=1}^m t^{\alpha} \left( \alpha t^{-1} c_i + \frac{dc_i}{dt} \right) \be_i.
\end{equation}
However, the $p$-adic valuation of $\alpha$ is negative and the $p$-adic valuation of $c_i$ is no greater than that of its derivative, so \eqref{eq:transversality2} can only hold if $c_i = 0$ for all $i=0$. This implies that $\bv \in \calE_1$, as needed.
\end{proof}

\begin{lemma} \label{L:transversality2}
Theorem~\ref{T:transversality} holds in the case $\calE \in \FIsoc(X)$.
\end{lemma}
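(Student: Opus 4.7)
My plan is to deduce Theorem~\ref{T:transversality} for $\calE \in \FIsoc(X)$ by reducing to the local splitting of Lemma~\ref{L:transversality1}, in place of the Cartier-operator argument of \cite{drinfeld-kedlaya}. The first reduction is that part (b) follows from part (a): once the vertex $(k, s_1 + \cdots + s_k)$ of the generic slope polygon is known to persist at every point of $X$, Theorem~\ref{T:filtration} furnishes a rank-$k$ subobject $\calE_1 \subset \calE$ whose slopes at each point are the $k$ smallest. To split off a complement, I would compute $\Ext^1_{\FIsoc(X)}(\calE/\calE_1, \calE_1)$ by restriction to formal neighborhoods of points; a dévissage using Lemma~\ref{L:transversality1} (the gap $s_{k+1}-s_k>1$ propagates to every isoclinic pair of the slope filtration that straddles the cut at $k$) yields vanishing for the underlying $\sigma$-modules, and Remark~\ref{R:filtration no connection}'s Kodaira-Spencer trick promotes a $\sigma$-splitting to a splitting in $\FIsoc(X)$.

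For part (a), upper semicontinuity (Theorem~\ref{T:polygon variation}(a)) reduces persistence of the vertex to a pointwise check, and cutting by a curve $C \subseteq X$ through a candidate point $x$ that passes through a generic point of $X$ reduces further to $\dim X = 1$. Completing at $x$ gives $\calE|_{\hat X_x} \in \FIsoc(\kappa(x)\llbracket t \rrbracket)$ whose generic fiber $\calE_x^\eta \in \FIsoc(\kappa(x)((t)))$ still exhibits a slope gap $>1$ at position $k$. Since every object in $\FIsoc(\kappa(x)((t)))$ admits a slope filtration (Remark~\ref{R:filtration local model}), iterated applications of Lemma~\ref{L:transversality1}, using that each pair of isoclinic subquotients straddling position $k$ has slope gap $>1$, split $\calE_x^\eta \cong \calE_1^\eta \oplus \calE_2^\eta$ with $\calE_1^\eta$ of rank $k$ and slopes $s_1, \ldots, s_k$.

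The main obstacle is to descend the subobject $\calE_1^\eta \subseteq \calE_x^\eta$ to a rank-$k$ subobject $M_1 \subset \calE|_{\hat X_x}$ in $\FIsoc(\kappa(x)\llbracket t \rrbracket)$. A natural candidate is $M_1 := \calE|_{\hat X_x} \cap \calE_1^\eta$: since $\Omega[1/p] = W(\kappa(x))\llbracket t \rrbracket[1/p]$ is a PID, this is a finite free, $\sigma$- and $d/dt$-stable submodule saturated in $\calE|_{\hat X_x}$. The delicate point is that $\Omega[1/p] \to \Gamma[1/p]$ is flat but not faithfully flat, so the naive intersection need not have generic rank $k$; the Frobenius structure of $\calE_1^\eta$, combined with the special-versus-generic slope comparison (Remark~\ref{R:specialization local model}), should enforce the correct integral structure. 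Once $M_1$ is established with rank $k$ and generic slopes $s_1, \ldots, s_k$, a Newton-polygon majorization argument, using upper semicontinuity of the slope polygons of $M_1$ and of $\calE|_{\hat X_x}/M_1$ (each with prescribed sum of slopes), forces the largest special slope of $M_1$ to be at most $s_k$ and the smallest special slope of $\calE|_{\hat X_x}/M_1$ to be at least $s_{k+1}$. Combined with $s_{k+1}-s_k>1$, this places the vertex $(k, s_1 + \cdots + s_k)$ on the special slope polygon of $\calE|_{\hat X_x}$, completing part (a).
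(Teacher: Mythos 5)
Your outline captures some of the right ingredients (slope filtrations, Lemma~\ref{L:transversality1}, the Newton-polygon majorization), but it inverts the logical order used in the paper and, more importantly, leaves the two hardest steps unjustified.

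First, the ordering. The paper proves (b) first over an open dense $U\subseteq X$ on which the slope polygon is constant, then extends the resulting projector from $U$ to $X$ by the full faithfulness of restriction (Theorem~\ref{T:fully faithful1}), and only then reads off (a) from the resulting global direct-sum decomposition together with upper semicontinuity. Your plan proves (a) point by point, then tries to build (b) on top of it, which forces you into the two problems below.

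Second, the descent step in your part (a) is exactly the delicate issue you flag, and the remedy you propose (``the Frobenius structure\ldots combined with the special-versus-generic slope comparison should enforce the correct integral structure'') is not an argument: the statement that a Frobenius-stable filtration of $\calE|_{\hat X_x}\otimes\Gamma[p^{-1}]$ descends to $\Omega[p^{-1}]$ is essentially equivalent to the vertex persisting at the special point, so you are in danger of assuming what you want to prove. What would actually make this work is de Jong's full faithfulness theorem (Remark~\ref{R:fully faithful local}): Lemma~\ref{L:transversality1} gives you a projector in $\FIsoc(k((t)))$, and full faithfulness of $\FIsoc(k\llbracket t\rrbracket)\to\FIsoc(k((t)))$ lets you transport it to an idempotent endomorphism of $\calE|_{\hat X_x}$, whose image is the desired $M_1$ of rank $k$. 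This is structurally the same maneuver the paper performs once, globally, with Theorem~\ref{T:fully faithful1}.

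Third, your proposal for part (b) --- ``compute $\Ext^1_{\FIsoc(X)}(\calE/\calE_1,\calE_1)$ by restriction to formal neighborhoods of points'' --- does not correspond to any available local-to-global principle: formal neighborhoods of closed points are not a cover in any topology in which $\FIsoc$ is a stack, and vanishing of $\Ext^1$ at each formal disc does not imply global vanishing. The paper avoids this by reducing (via Remark~\ref{R:etale pushforward} and Remark~\ref{R:affine cover}) to $X=\AAA^n_k$, producing the $\sigma$-module splitting over the completed perfection $R'$ of the Tate algebra, and then descending to $R$ variable by variable by a genuine computation in the spirit of Lemma~\ref{L:transversality1}; the Kodaira--Spencer trick of Remark~\ref{R:filtration no connection} is then used, as you anticipated, to make the splitting respect the connection. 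If you want to follow the paper's route you should replace your local $\Ext$ computation with this explicit global reduction.
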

\begin{proof}
We first show that the claim may be reduced from $X$ to an open dense affine subspace $U$. The splitting of $\calE$ is defined by a projector, so it can be extended from $U$ to $X$ using 
Theorem~\ref{T:fully faithful1}. This in turn implies Theorem~\ref{T:transversality}(a) using Theorem~\ref{T:polygon variation}: the sum of the slopes of $\calE_1$ is locally constant, the largest slope of $\calE_1$ can only decrease under specialization, and the smallest slope of $\calE_2$ can only increase under specialization.

Using Theorem~\ref{T:polygon variation} again, we may thus reduce to the case where $\calE$ has constant slope polygon (so we no longer need to verify Theorem~\ref{T:transversality}(a) separately). By Corollary~\ref{C:filtration}, $\calE$ now admits a slope filtration. We are thus reduced to showing that if $X$ is affine and 
\[
0 \to \calE_1 \to \calE \to \calE_2 \to 0
\]
is a short exact sequence in $\FIsoc(X)$ 
with $\calE_i$ isoclinic of slope $s_i$ and $s_2 - s_1 > 1$, then this sequence splits uniquely.
Using Remark~\ref{R:etale pushforward} and Remark~\ref{R:affine cover}, we reduce to the case $X = \AAA^n_k$ (this is not essential but makes the argument slightly more transparent). As in Definition~\ref{D:convergent realization},
we may realize $\calE, \calE_1, \calE_2$ as finite projective modules over the Tate algebra $R = K \langle T_1,\dots,T_n \rangle$ equipped with compatible actions of the standard Frobenius lift $\sigma: T_i \mapsto T_i^p$ and the connection $\nabla$.
Let $R'$ be the completion 
of $K \langle T_1,\dots,T_n \rangle[T_1^{1/p^\infty}, \dots,T_n^{1/p^\infty}]$ for the Gauss norm; then the sequence of $\sigma$-modules splits uniquely over $R'$,
and we must show that this splitting descends to $R$ and is compatible with the action of the derivations $\frac{d}{dT_1},\dots,\frac{d}{dT_n}$. For this, we may apply Lemma~\ref{L:transversality1} to treat each variable individually.
\end{proof}

\end{document}